 \newcommand{\N}{\ensuremath{\mathbb{N}}}   
  \newcommand{\R}{\ensuremath{\mathbb{R}}}   
  \renewcommand{\epsilon}{\varepsilon}       
  \newcommand{\eps}{\varepsilon}       
  \newtheorem{satz}{Satz}[section]
  \newtheorem{lemma}[satz]{Lemma}
  \newtheorem{proposition}[satz]{Proposition}
  \newtheorem{theorem}[satz]{Theorem}
  \newtheorem{remark}[satz]{Remark}
  \theoremstyle{definition}
  \newtheorem{definition}[satz]{Definition}
  \newtheorem{example}[satz]{Example}
  \theoremstyle{remark}
\title{Dynamic optimal transport on networks}
\author{Martin Burger\thanks{Friedrich-Alexander-Universität Erlangen-Nürnberg, Department Mathematik, Germany (martin.burger@fau.de).} \ \
Ina Humpert\thanks{Westf\"alische Wilhelms-Universit\"at (WWU) M\"unster, Institute for Analysis and Computational Mathematics, Germany (ina.humpert@uni-muenster.de).} \ \
Jan-Frederik Pietschmann\thanks{Technische Universität Chemnitz, Fakult\"at f\"ur Mathematik, Germany (jfpietschmann@math.tu-chemnitz.de).} \ \
}
\begin{document}

\maketitle

\begin{abstract}
	\begin{center}
		\textbf{Abstract}
	\end{center}
\vspace{1ex}
We study a dynamic optimal transport problem on a network. Despite the cost for transport along the edges, an additional cost, scaled with a parameter $\kappa$, has to be paid for interchanging mass between edges and vertices. We show existence of minimisers using duality and discuss the relationship of the model to other metrics such as Fisher-Rao and the classical Wasserstein metric. Finally, we examine the limiting behaviour of the model in terms of the parameter $\kappa$. 

\vspace{2ex}

\textbf{Keywords:} dynamic optimal transport on networks, Benamou-Brenier formulation, Rockafellar duality, Wasserstein metric
\end{abstract} 

\section{Introduction}

Transport on networks is an important problem that arises in many areas of science, e.g. traffic on road networks \cite{bressan2014flows}, distribution of gas \cite{GasFlowNetwork2006,mindt2019entropy}, 
or transport of vesicles within neurites, \cite{hill2004fast,humpert_role_2019,tsaneva-atanasova_modeling_2007,yadaw_dynamic_2019}. On the other hand, there is the generic theory of optimal transport that describes how to move a given amount of mass at the lowest cost (see for example the books of Villani \cite{villani_topics_2003,villani2008optimal}, Santambrogio \cite{santambrogio_optimal_2015} or the survey of Brasco \cite{brasco_survey_2012} for the necessary background) and its dynamic variant introduced by Benamou and Brenier \cite{benamou_computational_2000}. 

In this paper, we aim to combine these notions by introducing a dynamic formulation of optimal transport on a network, where mass is transported along edges but can also be stored in vertices. 

In the classical theory of optimal transport the \textit{Wasserstein-distance} of order $p$ between two probability measures $\mu_1$ and $\mu_2$ on $\Omega \subset \R^n$ is defined by as
\begin{align}\label{eq:Wasserstein}
	d(\mu_1, \mu_2)^p \coloneqq \inf_{p \in \mathbb{P}(\mu_1, \mu_2)} \int_{\Omega \times \Omega} \vert x-y\vert^p  ~ dp(x,y),
\end{align}
where the symbol $\vert \cdot \vert$ denotes the Euclidean norm on $\mathbb{R}^n$ and $\mathbb{P}$ denotes the set of probability measures with marginals $\mu_1$ and $\mu_2$.

Initially introduced as a numerical algorithm, Benamou and Brenier \cite{benamou_computational_2000} introduced a dynamic version of the optimal transport problem. They showed that the calculating the Wasserstein distance is equivalent to minimising an action functional representing the kinetic energy of curves connecting the two measures $\rho_1$ and $\rho_2$, subject to a constraint given by a continuity equation, i.e.
\begin{align*}
	W_2^2(\rho_1, \rho_2)
	\coloneqq \inf_{(\mu_t, F_t )} 
	\Big\lbrace \int_0^1 \int_{\Omega} \frac{\vert F_t(x)\vert^2}{\mu_t(x)} ~ dx dt ~ \Big\vert ~ \partial_t \mu_t + \text{div}(F_t) = 0, ~ \mu_t\vert_{t=0,1} = \rho_1, \rho_2 \Big\rbrace,
\end{align*}
where $F_t$ denotes the flux and $\mu_t$ a curve in the space of probability measures. This formulation has the additional merit that is can be easily generalised, e.g. to include non-linear mobilities \cite{Lisini2010,Carrillo2010_nonlinear_mobility}. One case which is of particular interest here is when the initial and terminal measures $\rho_1$ and $\rho_2$ have different mass. Then, the classical Wasserstein distance can be replaced by the Wasserstein-Fisher-Rao metric whose dynamic formulation is given as 
\begin{align}\label{eq:FisherRao}
    \begin{split} 
    \mathcal{WFR}_\kappa^2(\rho_0, \rho_1) &= \inf_{(\gamma_t, G_t,f_t)} \Bigg \lbrace  \iint_{[0,1] \times \Omega} \frac{\vert G_t \vert^2}{2 \gamma_t}\;dxdt  + \kappa^2 \iint_{[0,1] \times \Omega} \frac{\vert f_t \vert^2}{2 \gamma_t} ~ dx dt 
    \\
    ~ &\text{s.t.} \quad \partial_t \gamma_t + \text{div } G_t  = f,  ~ \gamma_t\vert_{t=0,1} = \rho_1, \rho_2 
    \Bigg \rbrace ,
    \end{split} 
\end{align}
for $\gamma_t \in \mathcal{M}^+(\Omega)$. This allows to compute distances between measures with different masses, see \cite{chizat_interpolating_2010} for an existence result. Recently it was shown that a static version in the spirit of \eqref{eq:Wasserstein} exits, see \cite{chizat_unbalanced_2019} for details. The general theory that deals with probability measures of different mass is called \textit{unbalanced optimal transport} and was simultaneously introduced in \cite{chizat_interpolating_2010, chizat_unbalanced_2019, kondratyev_new_2016, liero_optimal_2016, liero_optimal_2018}.

More recently, \cite{monsaingeon_new_2020} introduced a new transportation model on the closure of a domain $\overline{\Omega}$ that behaves differently in the interior and on the boundary while allowing for interaction between these two. This setting can be motivated if we think of $\Omega$ as a city with a ring road $\partial \Omega$ which can only be entered upon paying a fee denoted by $\kappa$.
The overall density of cars is then $\rho = (\omega, \gamma) \in \mathcal{M}^+(\overline{\Omega}) \times \mathcal{M}^+(\partial \Omega)$, where the first entry corresponds to the cars in the inner city and the second entry to the those on the ring road. Informally, this model is given as
\begin{align}\label{eq:min_monsaignion}
    \begin{split} 
    \mathcal{W}_{\mathrm{M}}^2(\rho_0, \rho_1) &= \inf_{(\omega_t,\gamma_t,F_t,G_t,f_t)} \Bigg \lbrace \iint_{[0,1] \times \overline{\Omega}} \frac{\vert F_t \vert^2}{2 \omega_t}\;dxdt
    + \iint_{[0,1] \times \partial\Omega} \frac{\vert G_t \vert^2}{2 \gamma_t}\;dxdt 
    + \kappa^2 \iint_{[0,1] \times \partial\Omega} \frac{\vert f_t \vert^2}{2 \gamma_t} ~ d\sigma dt
    \\
    ~ &\text{s.t } ~
    \begin{matrix}
        \partial_t \omega_t + \text{div } F_t = 0  & \text{ in } \Omega,  \\
        F_t \cdot \nu = f_t & \text{ in } \partial\Omega, 
    \end{matrix}
    \quad \text{and} \quad \partial_t \gamma_t + \text{div } G_t = f_t \text{ in } \partial \Omega
    \Bigg \rbrace ,
    \end{split} 
\end{align}
where the initial concentration is defined by $\rho_0 = (\omega_0, \gamma_0)$, the terminal concentration by $\rho_1 = (\omega_1, \rho_1)$, $F_t$ denotes the momentum in $\Omega$, $G_t$ the momentum on $\partial \Omega$ and $f_t$ is the normal outflux $f_t=F_t \cdot \nu$. Existence of minimisers was shown based on duality.
Understanding the one-dimensional model as a trivial network with only one edges and two vertices (in spatial dimension one) serves as a starting point for our investigation.

In this work we introduce a dynamic formulation on a planar network. We identify edges with one-dimensional intervals on which a classical action functional is minimized while at vertices, mass may be transferred onto or off from a vertex by reaction terms as in the Fisher-Rao metric \eqref{eq:FisherRao}, rendering the transport problem on each edge similar to \eqref{eq:min_monsaignion}. After the formulation of the problem, we show that it is well-defined again using Fenchel-Rockafellar duality, and also analyse the asymptotic behaviour in terms of the  cost parameter $\kappa$.

While our model is dynamic and allows for the storage of mass at the vertices, the static $1$-Wasserstein distance on metric graphs has been studies in \cite{Mazon2015_optimal}. In their work, the authors focus on a connection between Kantorovich potentials and solutions of a $p$-Laplace problem. More recently in \cite{Erbar2021_network}, the authors consider a similar setting as ours, yet again without explicit dynamics at the vertices. They introduce a $p$-Wasserstein distance in the spirit of Benamou and Brenier and show that, as in the classical setting, absolutely continuous curves admit vector fields that solve the continuity equation. Using this characterisation of geodesics, they observe that the entropy functional is not displacement convex. However, they are still able to characterise solutions to a drift-diffusion-attraction equation as a gradient flow with respect to their distance.

This paper is organised as follows: 
In Section \ref{sec:network_setting} we provide details on the network setting, in Section \ref{sec:network_existence} we introduce our model and show existence of minimisers by means of Rockafellar duality. Section \ref{sec:Network_relationships} we discuss the relationships of the distance-functional with other metrics as the Fisher-Rao and the classical Wasserstein-metric. In the limit case, where the costs for transporting mass over the vertices diverges to infinity, we recover that our distance converges either to infinity if masses are incompatible or to the classical Wasserstein-metric if masses are compatible.
In the Appendix we present a formal calculation of the first order optimality conditions.

\section{Network setting}
\label{sec:network_setting}
We consider a planar network where edges can be identified with one-dimensional intervals. We denote the complete network by $\mathcal{G} = (\mathcal{V},\mathcal{E})$ with $\mathcal{V} = \{V^1, \ldots, V^n\}$ the set of vertices for $n \in \N$ and $\mathcal{E} = \{E^1, \ldots, E^m\}$ the set of edges for $m\in\N$.
Every vertex is defined via its coordinates in the two-dimensional space $\R^2$, i.e. $V^i \in \R^2$ for every $i \in \lbrace 1, \ldots, n \rbrace$ and every edge is homeomorphic to a one-dimensional, open interval. To each edge we assign a starting and an end point and define two functions
$ \alpha, \omega \colon \mathcal{E} \rightarrow \mathcal{V}$
that assign to every edge its starting or its end point thus determining an orientation. The functions $\bar\alpha, \, \bar\omega \colon  \{1,\ldots, m\} \to \{1,\ldots , n\}$ map a given edge the indices of the respective vertices.
By $Z(V^i)$ we denote the indices of all edges originating or ending at $V^i$ for $i \in \lbrace 1 , \ldots, n \rbrace$, i.e.
\begin{align*}
     Z(V^i) = \big\lbrace j \in \lbrace 1, \ldots, m \rbrace \colon \alpha(E^j) = V^i \vee \omega(E^j) = V^i \big\rbrace .
\end{align*}
Finally, for all $(i,j) \in \{ (i,j) \; : \; i \in \{1,\ldots, n \} \text{ and } j \in Z(V^i) \}$ we denote by $\nu_{i,j}$ the outward normal vector of edge $j$ at the point where it is connected to vertex $i$. With this notation, $\nu_{\bar\alpha(j),j}$ gives the normal at the starting point of $E_j$.
Moreover, we denote by $\mathcal{M}_+(X)$ the set of non-negative bounded measures on a given space $X$ and more precisely the set of non-negative measures on the set of edges (vertices) by
\begin{align*}
\mathcal{M}_+(\mathcal{E}) &= \mathcal{M}_+(E^1)\times \ldots \times \mathcal{M}_+(E^m),\\
\mathcal{M}_+(\mathcal{V}) &= \mathcal{M}_+(V^1)\times \ldots \times \mathcal{M}_+(V^n).
\end{align*}
Since $V^i \in \R^d$, we have that each measure in $\mathcal{M}_+(\mathcal{V})$ is of the form 
$$
\sum_{i=1}^n c_i \delta_{V_i},
$$
and therefore we identify $\mathcal{M}_+(\mathcal{V})$ with $\R_+^n$ from now on.

To formulate the dynamic optimal transport problem on the network let $\rho_0^j,\, \rho_1^j \in \mathcal{M}_+(E^j)$, $j=1,\ldots, m$ and $\gamma_0^i,\,\gamma_1^i \in \R$, $i=1,\ldots, n$ be given, and denote by $\pmb{\rho_0} = (\rho_0^1,\ldots, \rho_0^m)$, $\pmb{\rho_1} = (\rho_1^1,\ldots, \rho_1^m)$, the vector of all concentrations on edges at time $t=0$ and $t=1$ and by $\pmb{\gamma_0} = (\gamma_0^1,\ldots, \gamma_0^n), \pmb{\gamma_1} = (\gamma_1^1,\ldots, \gamma_1^n)$ the vectors of the concentration on the vertices at time $t=0$ and $t=1$. Next, on the closed set $\Omega_{\mathcal{G}} = \bigcup_{i=1}^n V^i \cup  \bigcup_{j=1}^m E^j$ we define the measure that translates to the total density on the network by
\begin{align}\label{eq:total_mass_network}
\varsigma_l = \sum_{j=1}^m\rho_l^j + \sum_{i=1}^n \gamma_l^i, \quad l \in \{0,1\},
\end{align}
and make the assumption that our initial and final data $(\pmb{\rho_0},\pmb{\rho_1},\pmb{\gamma_0},\pmb{\gamma_1})$ are such that $\varsigma_0,\, \varsigma_1 \in \mathcal{P}(\Omega_{\mathcal{G}})$ holds.

On every edge and every vertex, i.e. for every $j \in \lbrace 1, \ldots, m \rbrace$ and $i\in \lbrace 1, \ldots , n \rbrace$, we consider the following \textit{continuity equation} on the network $\mathcal{G}$
\begin{align}
    \label{eq:ContuinityEquation}
    \partial_t \rho^j_t + \partial_x F^j_t = 0 \text{ in } E^j, \; \quad  \partial_t \gamma^i_t = f^i_t \text{ on } V^i \quad \text{ with } f^i_t = \sum_{j \in Z(V^i)} F^j_t(V_i) \cdot \nu_{i,j}, 
\end{align}
where $F^j_t \colon E^j \times (0,T] \rightarrow \R , f^i_t \in \R$, the space derivative $\partial_x F^j_t$ is calculated with respect to the orientation of the edge. 
We will give a rigorous definition of weak solution in the next section. A sketch of this situation is shown in Figure \ref{fig:network}. Note that as $\mathcal{V}$ is discrete, the time-dependent measures $\gamma^i_t, f^i_t$ for $i \in \lbrace 1 , \ldots ,  n \rbrace$ are given by
\begin{align*}
    \gamma^i_t = C_1(i,t) \delta_{V^i}, \qquad f^i_t = C_2(i,t) \delta_{V^i}
\end{align*}
where  $\delta_{V^i}$ denotes the Dirac-measure at the vertex $V^i$ and $C_1(i,t), C_2(i,t) \in \R^+$ are time-dependent constants. Thus, we identify the measures with their respective time-dependent constants while, by abuse of notation, still writing  $\gamma^i_t$ and $f^i_t$ in the following.


For a given network concentration $(\pmb{\rho_0},\pmb{\rho_1},\pmb{\gamma_0},\pmb{\gamma_1})\in \mathcal{M}_+(\mathcal{E})\times \mathcal{M}_+(\mathcal{E}) \times \mathcal{M}_+(\mathcal{V}) \times \mathcal{M}_+(\mathcal{V})$ with $\varsigma_0,\, \varsigma_1 \in \mathcal{P}(\Omega_{\mathcal{G}})$, we consider the \textit{minimisation-problem} of an action being the combination of  Wasserstein and the Fisher-Rao terms
\begin{align}
    \label{eq:minimizaionprob}
    \min_{(\pmb{\rho_t},\pmb{F_t},\pmb{\gamma_t},\pmb{f_t}) \in \mathcal{CE} (\pmb{\rho_0},\pmb{\rho_1},\pmb{\gamma_0},\pmb{\gamma_1})} \left\{ \sum_{j=1}^m \iint_{\overline{E^j} \times [0,1]} \frac{|F^j_t|^2}{2\rho^j_t}\;dxdt + \kappa^2\sum_{i=1}^n  \int_{[0,1] } \frac{|f^i_t|^2}{2\gamma^i_t}\; dt \right\},
\end{align}
where $\kappa > 0$ is a given constant and with
\begin{align*}
    \mathcal{CE}(\pmb{\rho_0},\pmb{\rho_1},\pmb{\gamma_0},\pmb{\gamma_1}) &= \Big\lbrace (\pmb{\rho_t},\pmb{F_t},\pmb{\gamma_t},\pmb{f_t}) \text{ that fulfil \eqref{eq:ContuinityEquation}} \text{ and } \pmb{\rho_{t=0}} = \pmb{\rho_0}, \pmb{\rho_{t=1}} = \pmb{\rho_1}, \, \pmb{\gamma_{t=0}} = \pmb{\gamma_0}, \pmb{\gamma_{t=1}} = \pmb{\gamma_1} \Big\rbrace.
\end{align*}
In this setting - at least on a formal level - the total mass on the network is conserved as no mass may enter or leave the system, i.e.
\begin{align*}
   \frac{d}{dt}\text{Mass}(\mathcal{G}, t) &= \frac{d}{dt}\left[ \sum_{j=1}^m \int_{\overline{E^j}} \rho^j_t ~ dx + \sum_{i=1}^n \gamma^i_t\right] = 0
\end{align*}
for every $t \in [0,T]$.
Moreover, for brevity we write $\Vert \rho_t^j \Vert = \Vert \rho_t^j \Vert_{L^1(E^j)}$ in the whole paper.

\begin{remark}[Generalisations]
    \begin{enumerate}
        \item[a)] An interesting generalisation of the current setting is the case where we allow mass in- and outflow at the vertices of grad 1, i.e. the outer vertices that are only connected with one edge.
        \item[b)] The generalisation to a non-connected network with finite many connected components is possible but we omit the proof for readability. 
    \end{enumerate}
    
\end{remark}

\begin{remark}[Kantorovich formulation and limit problem]
Another interesting question for further research is a static formulation of both \eqref{eq:min_monsaignion} and \eqref{eq:minimizaionprob}. For the first one, based on the explicit calculation of geodesics in \cite{monsaingeon_new_2020} between two point masses, one being located within the domain and one at the boundary, we conjecture that \eqref{eq:min_monsaignion} allows for a Kantorovich formulation with cost 
\begin{align}\label{eq:cost_limit}
c(x,y) = \begin{cases} 
            \frac{1}{2}|x-y|^2 & x,\, y \in \Omega\text{ or } x,\,y \in \partial \Omega,\\
            \frac{1}{2}|x-y|^2 + \kappa\sqrt{|x-y|^2+\kappa^2} + \kappa^2 & x\in \Omega, \, y \in \partial\Omega \text{ or } x\in \partial \Omega,\, y \in \Omega.
            \end{cases}
\end{align}
Furthermore, one might ask if one starts with a standard optimal transport problem on domain $\Omega = I_1 \cup I_2$ and let the size of $I_2$ go to zero, is there an appropriate rescaling of the cost so that we obtain \eqref{eq:min_monsaignion} in the limit? This could be examined either on the level of optimality conditions or in the respective static solutions where the limit cost would then need to be \eqref{eq:cost_limit}.
\end{remark}

\begin{figure}
	\begin{center}
		\begin{tabular}{cc}
			\includegraphics[width=0.55 \textwidth]{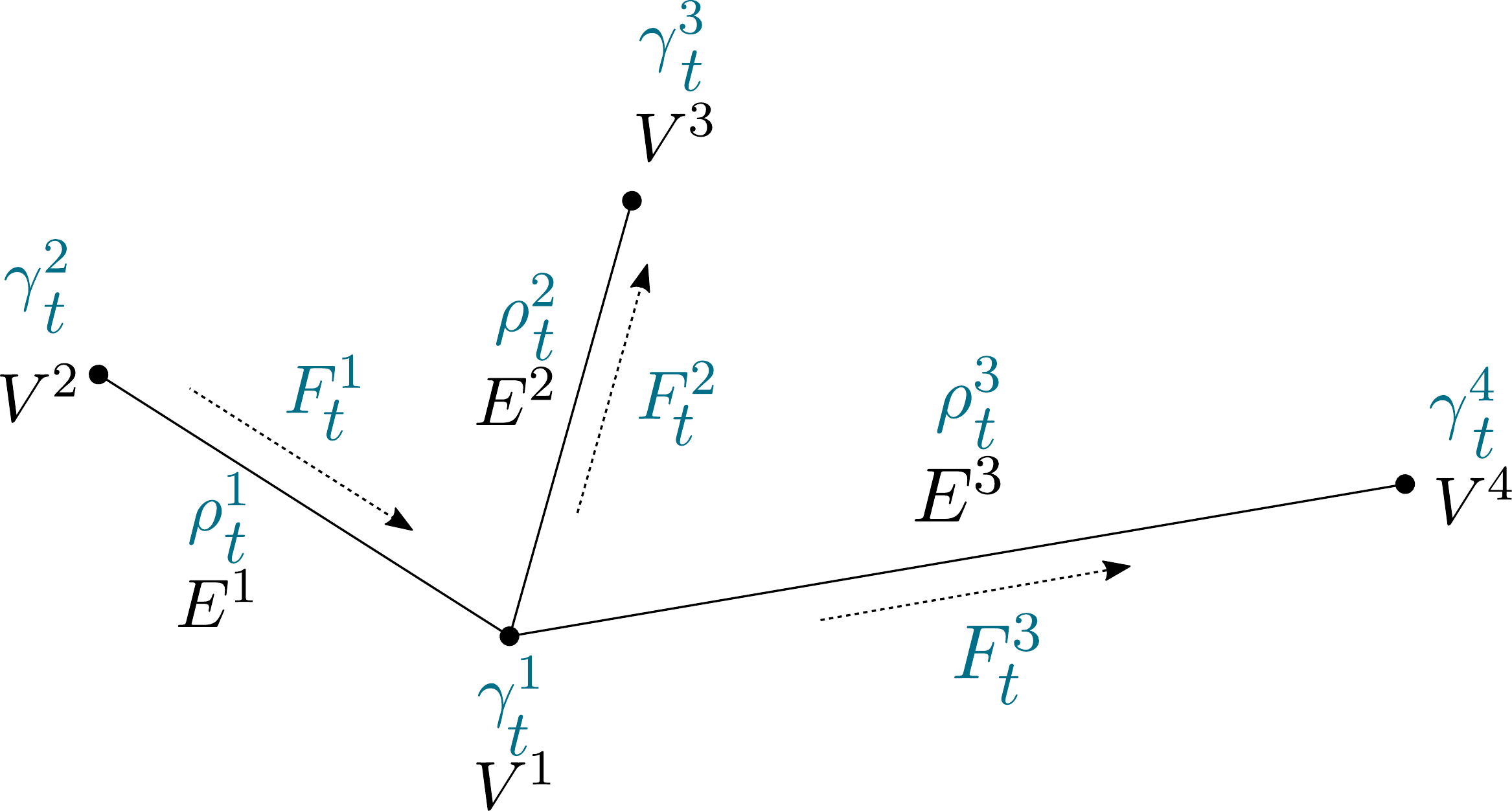} &
		\end{tabular}
	\end{center}
	\caption{Example of a network configuration with three edges $\mathcal{E}= \lbrace E^1, E^2, E^3 \rbrace$ and four vertices $\mathcal{V}= \lbrace V^1, V^2, V^3, V^4 \rbrace$. The concentrations on $E^j$ are given by $\rho^j_t$ for $j \in \lbrace 1, 2, 3 \rbrace$ and the concentration on the vertices is given by $\gamma^i_t$ on $V^i$ with $i \in \lbrace 1,2,3, 4 \rbrace$. Labels of edges and vertices are shown in black and concentrations on those are shown in blue.
	The index set corresponding to the edges originating or ending at $V^1$ is given by $Z(V^1) = \lbrace 1,2, 3\rbrace$ as $\omega(E^1)= \alpha(E^2) = \alpha(E^3) = V^1$.}
	\label{fig:network}
\end{figure}

\section{Transport model and existence of minimisers}
\label{sec:network_existence}

In this section, we show existence of minimisers of \eqref{eq:minimizaionprob}, based on a duality argument and by extending the strategy of  \cite{monsaingeon_new_2020} to the network setting.

\subsection{Continuity equations and action functional}
We denote by $Q_{\mathcal{G}} =  [0,1] \times \Omega_{\mathcal{G}}$ the space-time domain of the whole network, 
\begin{align*}
    Q_{\overline{E^j}} \coloneqq [0,1] \times \overline{E^j}, \quad \text{ and } \quad Q_{V^i} \coloneqq [0,1] \times V^i.
\end{align*}
The precise notion of weak solution for the continuity equation \eqref{eq:ContuinityEquation} is given as follows.
\begin{definition}[weak solution]
    \label{def:weakformcontinuityequation}
    Given $(\pmb{\rho_0},\pmb{\rho_1},\pmb{\gamma_0},\pmb{\gamma_1})\in \mathcal{M}_+(\mathcal{E})\times \mathcal{M}_+(\mathcal{E}) \times \mathcal{M}_+(\mathcal{V}) \times \mathcal{M}_+(\mathcal{V})$, we denote by $\mathcal{CE}(\pmb{\rho_0},\pmb{\rho_1},\pmb{\gamma_0},\pmb{\gamma_1})$ the set of measures $(\pmb{\rho_t},\pmb{F_t},\pmb{\gamma_t},\pmb{f_t})$ which satisfy the continuity equation \eqref{eq:ContuinityEquation} in the following weak sense
\begin{align}
    \begin{split}
    \label{eq:weakformualtionedges}
    \sum_{j=1}^m \Big[\iint_{ Q_{\overline{E^j}}} \partial_t \varphi_t \;d\rho^j_t &+ \partial_x \varphi_t \;dF^j_t \Big] - \sum_{i=1}^n \phi_t(V_i)f_t^i
    \\
    &= \sum_{j=1}^m \Big[\int_{\overline{E^j}} \varphi_1 \;d\rho^j_1 - \int_{\overline{E^j}} \varphi_0\;d\rho^j_0 \Big]
    \end{split}
\end{align}
for all test functions $\varphi_t \in C^1(Q_{\mathcal{G}})$ and 
\begin{align}
    \label{eq:weakformualtionvertices}
    \sum_{i=1}^n \Big[\int_{[0,1]} \partial_t \varphi^i_t \; \gamma^i_t dt \Big]
    =
    \sum_{i=1}^n \Big[\int_{[0,1] } \varphi^i_t \;df^i_t \Big]
    +
    \sum_{i=1}^n \Big[ \varphi^i_1 \gamma^i_1 - \varphi^i_0 \gamma^i_0 \Big]
\end{align}
for all $\varphi^i \in C^1([0,1])$, $i=1,\ldots, n$ denoting the test function corresponding to the vertex $V^i$.
\end{definition}
\begin{remark}
Note that as we are integrating over $\overline{E^j}$, formally each continuity equation produces a boundary term $\phi_t(\omega(E_j)) F_j\cdot \nu_{\bar\alpha(j),j} + \phi_t(\alpha(E_j)) F_j\cdot \nu_{\bar\omega(j),j}$. Summing over all equations this yields
\begin{align*}
&\sum_{j=1}^m(\phi_t(\omega(E_j)) F_t^j(\omega(E_j))\cdot \nu_{\bar\omega(j),j} + \phi_t(\alpha(E_j)) F_t^j(\alpha(E_j))\cdot \nu_{\bar\alpha(j),j})\\
&= \sum_{i=1}^n \phi_t(V_i)\sum_{j \in Z(V_i)}  F_t^j(V_i)\cdot \nu_{i,j} = \sum_{i=1}^{n} \phi_t(V_i) f_t^i,
\end{align*}
using the coupling of fluxes in \eqref{eq:ContuinityEquation} which is therefore incorporated in the weak formulation.
\end{remark}

The continuity equation \eqref{eq:ContuinityEquation} can also be considered in the global sense, i.e. can be formulated as an equation on the whole network. This results in:

\begin{proposition}(global continuity equation)
    \label{prop:globalequation}
    Let $(\pmb{\rho_t},\pmb{F_t},\pmb{\gamma_t},\pmb{f_t}) \in \mathcal{CE}(\pmb{\rho_0},\pmb{\rho_1},\pmb{\gamma_0},\pmb{\gamma_1})$ and
    define by $\tilde{F}^j_t$ the trivial extension of $F^j_t$ on the whole network, i.e. $\tilde{F}^j_t = 0$ on all other edges $E^k$ with $k \neq j$, (the measures $\tilde{\gamma}^j_t, \tilde{f}^i_t$ and $\tilde{\rho}^j_t$ are defined analogue).
    We define the following global variables
    \begin{align*}
            \varsigma_t = \sum_{j=1}^m\tilde{\rho}^j_t + \sum_{i=1}^n \tilde{\gamma}^i_t, \quad  H_t = \sum_{j=1}^m \tilde{F}^j_t \quad  \text{ and } \quad h_t = \sum_{i=1}^n \tilde{f}^i_t.
    \end{align*}
    Then, the existence of the global continuity equation
    \begin{align*}
        \begin{cases} \partial_t \varsigma_t + \partial_x H_t = h_t & \text{in } \mathcal{Q}_\mathcal{G},  \\
        \sum_{j \in Z(V^i)} F^j_t \cdot \nu_{i,j} = f^i_t & \text{in } \mathcal{Q}_\mathcal{V}. \end{cases}
    \end{align*}
    in the weak sense with initial/terminal data $\pmb{\rho_0}, \pmb{\rho_1}, \pmb{\gamma_0}, \pmb{\gamma_1}$ is a consequence of the existence of \ref{def:weakformcontinuityequation}.
\end{proposition}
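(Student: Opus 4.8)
The plan is to obtain the global weak formulation by nothing more than adding the two local identities \eqref{eq:weakformualtionedges} and \eqref{eq:weakformualtionvertices}, and to read off the flux coupling from the vertex contributions. First I would fix the target: I call $(\varsigma_t, H_t, h_t)$ a weak solution of the global continuity equation if
\begin{align*}
    \iint_{Q_{\mathcal{G}}} \partial_t \varphi_t \, d\varsigma_t + \iint_{Q_{\mathcal{G}}} \partial_x \varphi_t \, dH_t = \int_{\Omega_{\mathcal{G}}} \varphi_1 \, d\varsigma_1 - \int_{\Omega_{\mathcal{G}}} \varphi_0 \, d\varsigma_0 + \iint_{Q_{\mathcal{G}}} \varphi_t \, dh_t
\end{align*}
holds for all $\varphi \in C^1(Q_{\mathcal{G}})$, together with the coupling $\sum_{j \in Z(V^i)} F^j_t \cdot \nu_{i,j} = f^i_t$ at each vertex. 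It is essential here that a test function on the metric graph is $C^1$ in time and along each edge but single-valued at every vertex: its restriction to $\overline{E^j}$ is then admissible in \eqref{eq:weakformualtionedges}, while the trace $t \mapsto \varphi_t(V^i)$ serves as $\varphi^i$ in \eqref{eq:weakformualtionvertices}, and the single value at $V^i$ is exactly what lets the edge boundary contributions be collected vertex by vertex.

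Next I would use linearity of the integral in the measure argument: since $\varsigma_t$, $H_t$ and $h_t$ are the sums of the trivially extended edge and vertex measures, every global space--time integral splits along the network,
\begin{align*}
    \iint_{Q_{\mathcal{G}}} \partial_t \varphi_t \, d\varsigma_t &= \sum_{j=1}^m \iint_{Q_{\overline{E^j}}} \partial_t \varphi_t \, d\rho^j_t + \sum_{i=1}^n \int_0^1 \partial_t \varphi_t(V^i)\, \gamma^i_t \, dt, \\
    \iint_{Q_{\mathcal{G}}} \partial_x \varphi_t \, dH_t &= \sum_{j=1}^m \iint_{Q_{\overline{E^j}}} \partial_x \varphi_t \, dF^j_t ,
\end{align*}
and likewise $\iint_{Q_{\mathcal{G}}} \varphi_t \, dh_t = \sum_{i=1}^n \int_0^1 \varphi_t(V^i) \, df^i_t$, where I use the identification of $\gamma^i_t$ and $f^i_t$ with their Dirac weights so that the vertex pieces reduce to time integrals. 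Substituting these decompositions, the target identity becomes precisely the sum of \eqref{eq:weakformualtionedges} and \eqref{eq:weakformualtionvertices}: the edge identity furnishes the edge integrals together with the initial and terminal edge data, the vertex identity furnishes the $\partial_t\varphi_t(V^i)\gamma^i_t$ terms and the vertex data, and the reaction terms $\sum_i \int_0^1 \varphi_t(V^i) f^i_t \, dt$ are matched. The coupling then needs no separate argument: as recorded in the Remark following Definition \ref{def:weakformcontinuityequation}, the spatial boundary terms $\varphi_t(\omega(E_j)) F^j_t \cdot \nu_{\bar\omega(j),j} + \varphi_t(\alpha(E_j)) F^j_t \cdot \nu_{\bar\alpha(j),j}$ produced by integrating by parts on each $\overline{E^j}$ regroup, vertex by vertex, into $\sum_i \varphi_t(V^i) \sum_{j \in Z(V^i)} F^j_t \cdot \nu_{i,j}$, which equals $\sum_i \varphi_t(V^i) f^i_t$ exactly when the flux coupling holds.

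I expect the main obstacle to be the careful bookkeeping of these vertex terms, and in particular the orientation signs: one has to check that the outward normals $\nu_{i,j}$ (equal to $+1$ at an end vertex and $-1$ at a start vertex in the one-dimensional edge coordinate) give the correct signs, so that the spatial boundary contributions of all edges meeting at $V^i$ assemble into $f^i_t$ with the sign dictated by $\partial_t \gamma^i_t = f^i_t$, and so that the reaction terms coming from the edge and the vertex identities combine with the right multiplicity rather than cancelling or doubling. Once this orientation bookkeeping is settled, summing the two local identities yields the global weak identity for every admissible $\varphi$, and isolating the vertex contributions recovers the coupling $\sum_{j \in Z(V^i)} F^j_t \cdot \nu_{i,j} = f^i_t$; a minor accompanying point is to confirm that every $\varphi \in C^1(Q_{\mathcal{G}})$ does restrict to legitimate test functions in both \eqref{eq:weakformualtionedges} and \eqref{eq:weakformualtionvertices}, which is immediate from continuity across the vertices.
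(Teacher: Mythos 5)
Your proposal is correct and follows essentially the same route as the paper: the paper's proof also takes a global test function $\varphi \in C^1(Q_{\mathcal{G}})$, chooses $\varphi^i_t = \varphi|_{V^i}$ as the vertex test functions, sums \eqref{eq:weakformualtionedges} and \eqref{eq:weakformualtionvertices}, and uses the trivial extensions to rewrite the resulting sums of local integrals as global ones, with the flux coupling already built into the weak formulation as noted in the Remark. Your version merely spells out the linearity/bookkeeping steps that the paper leaves implicit.
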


\begin{proof}
     Choosing $\varphi^i_t = \varphi|_{V^i}$ in \eqref{eq:weakformualtionvertices} (where the pointwise restriction exists as $\varphi \in C^1(Q_{\mathcal{G}})$) and summing up \eqref{eq:weakformualtionedges} and \eqref{eq:weakformualtionvertices}, we recover the claimed weak formulation
   \begin{align*}
       \iint_{\mathcal{Q}_\mathcal{G}} &\partial_t \varphi_t \; d \Big(\sum_{j=1}^m\tilde{\rho}^j_t + \sum_{i=1}^n \tilde{\gamma}^i_t \Big)  
       + \iint_{\mathcal{Q}_\mathcal{G}} \partial_x \varphi_t \; d \Big(\sum_{j=1}^m \tilde{F}^j_t \Big) - \iint_{\mathcal{V} \times [0,1]} \varphi_t \;d h_t
       \\
       &\quad =  \iint_{\mathcal{Q}_\mathcal{G}} \varphi_t \; d \Big(\sum_{i=1}^n \tilde{f}^i_t \Big)
       + \int_\mathcal{G} \varphi_1 \;  d \Big( \sum_{j=1}^m\tilde{\rho}^j_1 + \sum_{i=1}^n \tilde{\gamma}^i_1 \Big) 
       -  \int_\mathcal{G} \varphi_0 \; d\Big( \sum_{j=1}^m\tilde{\rho}^j_0 + \sum_{i=1}^n \tilde{\gamma}^i_0 \Big) 
   \end{align*}
   for all $\varphi_t \in C^1(\mathcal{Q}_\mathcal{G})$.
\end{proof}


To rigorously define the minimisation of the action functional, we need the following definition.
\begin{definition}[generalised Lagrangian] 
    For $(a, b) \in \R^+ \times \R $  we define the action density
    \begin{align*}
        A(a, b) = \left\{\begin{array}{cl}
        \frac{|b|^2}{2a} & \text{ if } a > 0,\\
        0 & \text{ if } (a, b) = (0,0),\\
        + \infty & \text{ otherwise.} \\
        \end{array}\right.
    \end{align*}
For $\pmb{a} \in \R^n$, $\pmb{b} \in \R^n$  and $\pmb{\eta} = ( \pmb{\eta}^1, \ldots, \pmb{\eta}^n)\in \R^{|Z(V^1)|} \times \cdots \times \R^{|Z(V^n)|}$ with  $\pmb{\eta}^{i} = (\eta^{i,j})_{j \in Z(V^i)}$, 
we also introduce the extended action density as 
\begin{align}
\bar A (\pmb{a}, \pmb{b}; \pmb{\eta}) = \sum_{i=1}^n \left(A(a^i,b^i) + \begin{cases} 0 & \text{if } a^i +  \sum_{k\in  Z(V^i)} \eta^{i,k} = 0, \\ +\infty & \text{otherwise}. \end{cases}\right)
\end{align}
\end{definition}


Let us remark that $A(a, b)$ is convex in both variables, lower semi-continuous and 1-homogeneous. This allows us to give a rigorous definition of the action functional. 
\begin{definition}[action functional]
For $\pmb{\mu_t} = (\pmb{\rho_t}, \pmb{F_t}; \pmb{\gamma_t}, \pmb{f_t})$, we define the action functional as
\begin{align}
    \begin{split}
    \label{eq:actionfunctional}
    \mathcal{A}(\pmb{\mu_t}) &\coloneqq  \sum_{j=1}^m \iint_{ Q_{\overline{E^j}}} A\Big(\frac{d \rho^j_t}{d\theta^j}, \frac{d F^j_t}{d\theta^j} \Big) \; d\theta^j + \kappa^2 \sum_{i=1}^n  \int_0^1 A(\gamma^i_t, f^i_t ) \; dt,
    \end{split}
\end{align}
where $\theta^j$ are non-negative Borel reference measures such that $\vert \rho^j_t \vert \ll \theta^j,  \vert F^j_t \vert \ll \theta^j$.
Since $A$ is jointly 1-homogeneous, this definition does not depend on the choice of $\theta^j_\rho$.
\end{definition}
Note that this action functional is lower semi-continuous, see \cite[Theorem 3.3]{bouchitte_new_1990}. We now define the bounded Lipschitz-distance between two measures $\rho^j_0, \rho^j_1 \in \mathcal{M}^+(E^j)$ on an edge and, respectively, for two measures  $\gamma^i_0, \gamma^i_1 \in \mathcal{M}^+(V^i)$ on a vertex, as
\begin{align*}
    d_{\text{BL}, \mathcal{E}} (\rho^j_0, \rho^j_1) &= \sup \Big\lbrace \Big\vert \int_{\overline{E^j}} \Phi d(\rho^j_1 - \rho^j_0) \Big\vert \quad \text{s.t.} \quad \Vert \Phi \Vert_\infty + \text{Lip}(\Phi) \leq 1 \Big\rbrace,
    \\
    d_{\text{BL}, \mathcal{V}} (\gamma_0^i, \gamma_1^i) &=  \vert \gamma_0^i - \gamma_1^i \vert,
\end{align*}
where $\Phi \colon E^j \rightarrow \R$ is a Lipschitz-continuous function with $\text{Lip}(\Phi)$ denoting its Lipschitz-constant (again, by abuse of notion we denote by $\gamma^i_t$ a measure as well as the corresponding constant).
It is well known that the bounded Lipschitz-distance metrises the narrow convergence of probability measures.
Let us continue by summarising some properties of solutions to the continuity equation.
\begin{proposition}[properties of solutions of continuity equations]
    \label{prop:propertiesofthesolutions}
    Any admissible quadruple 
    $(\pmb{\rho_t}, \pmb{F_t}; \pmb{\gamma_t}, \pmb{f_t}) \in \mathcal{CE}(\pmb{\rho_0},\pmb{\rho_1},\pmb{\gamma_0},\pmb{\gamma_1})$ can be disintegrated in time as
    \begin{align*}
        d \rho^j_t(x,t) = d \rho^j_t(x) dt \qquad 
    \end{align*}
    for all $j \in \lbrace 1,  \ldots , m \rbrace$.
    If the action functional $\mathcal{A}(\pmb{\rho_t}, \pmb{F_t}, \pmb{\gamma_t}, \pmb{f_t})< \infty$ is finite, then we obtain the following:
    \begin{enumerate}
        \item[(i)] For every $t$, the measures $\rho^j_t, \gamma^i_t$ are non-negative for all $ i \in \lbrace 1, \ldots, n\rbrace, j \in \lbrace 1, \ldots , m \rbrace$.
        Moreover, we the total mass $\varsigma_t$ defined in \eqref{eq:total_mass_network} is conserved, i.e.
        \begin{align*}
            \Vert \varsigma_t \Vert = \sum_{j=1}^m \Vert \rho^j_t \Vert + \sum_{i=1}^n  \gamma^i_t = 1, \qquad \text{ for almost every } t \in [0,1].
        \end{align*}
        \item[(ii)] For every $ j \in \lbrace 1, \ldots, m \rbrace$ the Radon-Nikodym densities
        \begin{align*}
            u^j_t (x) \coloneqq \frac{d F^j_t}{d\rho^j_t}, 
        \end{align*}
        are well-defined $d\rho^j_t$ almost everywhere and we obtain an alternative formulation of $\mathcal{A}(\pmb{\mu_t})$ as
        \begin{align*}
            \mathcal{A}(\pmb{\rho_t}, \pmb{F_t}; \pmb{\gamma_t}, \pmb{f_t}) &= \frac{1}{2} \sum_{j=1}^m \iint_{\mathcal{Q}_{\overline{E^j}}} \vert u^j_t \vert^2 d\rho^j_t +  \frac{1}{2} \sum_{i=1}^n \int_0^1 \kappa^2 \frac{\vert f^i_t \vert^2}{\gamma^i_t} ~ dt 
            \\
            &= \frac{1}{2} \sum_{j=1}^m \iint_{\mathcal{Q}_{\overline{E^j}}} \vert u^j_t \vert^2 d \rho^j_t dt
            + \frac{1}{2} \sum_{i=1}^n \int_0^1 \kappa^2 \frac{\vert f^i_t \vert^2}{\gamma^i_t} ~ dt  \;.
        \end{align*}
        \item[(iii)] The curves $ t\mapsto \rho^j_t \in \mathcal{M}(\overline{E^j})$ and $ t\mapsto \gamma^i_t \in \mathcal{M}(V^i)$ are narrowly continuous for all $ i \in \lbrace 1, \ldots, n\rbrace, j \in \lbrace 1, \ldots, m \rbrace$ and satisfy the bounded-Lipschitz estimate
        \begin{align}
            \label{eq:Abschaetzungboundedlipschitzdistance}
            \sum_{j=1}^m d_{\text{BL}, \overline{E^j}} (\rho^j_s, \rho^j_t ) 
            + \sum_{i=1}^n d_{\text{BL}, V^i}(\gamma^i_s, \gamma^i_t) \leq C_\kappa \sqrt{\mathcal{A}((\pmb{\rho_t}, \pmb{F_t}; \pmb{\gamma_t}, \pmb{f_t}))} \vert t-s\vert^{\frac{1}{2}}
        \end{align}
        for every time $s,t \in [0,1]$ and the constant $C_\kappa = 2\sqrt{2(n+m)} \max\lbrace 1,\frac{1}{\kappa} \rbrace$.
        In particular the initial/terminal conditions are taken in the narrow sense.
    \end{enumerate}
\end{proposition}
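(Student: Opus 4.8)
The plan is to treat the three assertions in order, using the weak formulations \eqref{eq:weakformualtionedges}--\eqref{eq:weakformualtionvertices} together with the properties of the action density $A$. The time-disintegration $d\rho^j(x,t)=d\rho^j_t(x)\,dt$ I would obtain from the standard disintegration theorem: the continuity equation makes $t\mapsto\int\varphi\,d\rho^j_t$ absolutely continuous for each fixed test function, and the time-marginal of $\rho^j$ is absolutely continuous with respect to Lebesgue measure (its density $\|\rho^j_t\|$ being bounded by the conserved total mass), so $\rho^j$ admits a Borel family of time-slices $(\rho^j_t)_t$.

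For (i), non-negativity is immediate from the definition of $A$: wherever the action is finite the Radon--Nikodym density $d\rho^j_t/d\theta^j$ (resp.\ the constant $\gamma^i_t$) cannot be negative, since $A(a,b)=+\infty$ unless $a>0$ or $(a,b)=(0,0)$; hence $\rho^j_t,\gamma^i_t\ge 0$. For mass conservation I would add the two weak formulations after choosing consistent test functions $\varphi^i_t=\varphi_t(V_i)$, so that the coupling term $\sum_i\int_0^1\varphi_t(V_i)f^i_t\,dt$ cancels between \eqref{eq:weakformualtionedges} and \eqref{eq:weakformualtionvertices}. Specialising the resulting identity to $\varphi=\psi(t)$ constant in space kills the $\partial_x\varphi$-term and yields $\int_0^1\psi'(t)\|\varsigma_t\|\,dt=\psi(1)-\psi(0)$ for every $\psi\in C^1$, using that $\varsigma_0,\varsigma_1\in\mathcal P(\Omega_{\mathcal G})$; this forces $\|\varsigma_t\|=1$ for a.e.\ $t$.

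For (ii), finiteness of the action again via the definition of $A$ forces $F^j_t\ll\rho^j_t$ (otherwise the density would be $+\infty$ on the singular part), so $u^j_t=dF^j_t/d\rho^j_t$ is well defined $\rho^j_t$-a.e.; substituting $A(\rho^j_t,F^j_t)=\tfrac12|u^j_t|^2\rho^j_t$ and using the time-disintegration to rewrite the space-time integral as an iterated integral gives the claimed reformulation.

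The heart of the proof is (iii), and the bounded-Lipschitz estimate is where I expect the main difficulty. On a vertex this is clean: from $\partial_t\gamma^i_t=f^i_t$ one gets $|\gamma^i_t-\gamma^i_s|\le\int_s^t|f^i_r|\,dr$, and Cauchy--Schwarz with $\gamma^i_r\le\|\varsigma_r\|=1$ bounds this by $\tfrac{\sqrt2}{\kappa}\sqrt{\mathcal A^i_{\mathrm{vertex}}}\,|t-s|^{1/2}$, writing $\mathcal A^i_{\mathrm{vertex}}=\tfrac{\kappa^2}{2}\int_0^1|f^i|^2/\gamma^i\,dt$. On an edge, testing the continuity equation with a spatial $\Phi$ satisfying $\|\Phi\|_\infty+\mathrm{Lip}(\Phi)\le1$ produces the interior term $\int_s^t\int_{E^j}\partial_x\Phi\,dF^j_r\,dr$, bounded by $\int_s^t\|F^j_r\|\,dr\le\sqrt2\,\sqrt{\mathcal A^j_{\mathrm{edge}}}\,|t-s|^{1/2}$ after Cauchy--Schwarz (using $\|F^j_r\|=\int|u^j_r|\,d\rho^j_r$ and $\|\rho^j_r\|\le1$), \emph{plus} boundary contributions $\Phi(V)\,F^j_r(V)\cdot\nu$ at the vertices of $E^j$. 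These traces are not controlled by the edge action, and absorbing them is the crux: I would bound them through the flux-coupling $f^i_t=\sum_{j\in Z(V^i)}F^j_t(V_i)\cdot\nu_{i,j}$ and the vertex action exactly as above. Summing over all $n+m$ edges and vertices, pulling out $\max\{1,1/\kappa\}$ and applying Cauchy--Schwarz over the $n+m$ summands, so that $\sum_k\sqrt{\mathcal A_k}\le\sqrt{n+m}\,\sqrt{\sum_k\mathcal A_k}=\sqrt{n+m}\,\sqrt{\mathcal A}$, then yields the constant $C_\kappa=2\sqrt{2(n+m)}\max\{1,1/\kappa\}$, the extra factor $2$ accounting precisely for the boundary-flux contribution. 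Finally, since the bounded-Lipschitz distance metrises narrow convergence on measures of uniformly bounded mass and the right-hand side vanishes as $s\to t$, each curve $t\mapsto\rho^j_t$ and $t\mapsto\gamma^i_t$ is narrowly (indeed $\tfrac12$-Hölder) continuous, so the initial and terminal data are attained narrowly.
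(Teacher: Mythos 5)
Your handling of the disintegration, of parts (i) and (ii) (for which the paper simply refers to \cite[Prop. 3.5]{monsaingeon_new_2020}), and of the vertex half of (iii) is sound and matches the paper where the paper gives details. The genuine gap is exactly at the point you yourself call the crux: the per-edge boundary traces \emph{cannot} be absorbed ``through the flux-coupling and the vertex action''. The coupling $f^i_t=\sum_{j\in Z(V^i)}F^j_t(V_i)\cdot\nu_{i,j}$ constrains only the \emph{sum} of the traces of all edges meeting at $V^i$, and the vertex action controls only $f^i_t$; the individual traces remain completely uncontrolled. Two edges meeting at $V^i$ may exchange mass with equal and opposite fluxes, so that $f^i_t\equiv 0$ and the vertex action vanishes, while each single-edge boundary term is large. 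Concretely: let $E^1,E^2$ share a vertex $V$, place a unit Dirac at distance $\eps$ from $V$ on $E^1$ and slide it at constant speed through $V$ to the point at distance $\eps$ from $V$ on $E^2$. This quadruple is admissible (the boundary terms cancel in \eqref{eq:weakformualtionedges} because test functions there are single-valued at $V$), it has $f\equiv 0$ and total action $\mathcal{A}=2\eps^2$, yet $d_{\mathrm{BL},\overline{E^1}}(\rho^1_0,\rho^1_1)+d_{\mathrm{BL},\overline{E^2}}(\rho^2_0,\rho^2_1)=2$. So no argument along the lines you propose can close a per-edge estimate; the traces you need to bound are simply not controlled by any term of the action.

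The paper's proof takes a different route that avoids individual traces altogether: it fixes \emph{one} global $\Phi\in C^1(\mathcal{G})$, single-valued at the vertices, and differentiates all $l^j_t=\int_{\overline{E^j}}\Phi\,d\rho^j_t$ simultaneously. Summing over $j$, the weak formulation \eqref{eq:weakformualtionedges} replaces all boundary contributions by the single term $\sum_i\Phi(V_i)f^i_t$, giving $\bigl|\tfrac{d}{dt}\sum_j l^j_t\bigr|\le \Vert\partial_x\Phi\Vert_\infty\sum_j\int|u^j_t|\,d\rho^j_t+\Vert\Phi\Vert_\infty\sum_i|f^i_t|$, after which the Cauchy--Schwarz steps you wrote go through verbatim. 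Note what this device buys and what it does not: taking suprema over such global $\Phi$ controls the bounded-Lipschitz distance of the aggregated edge measure $\sum_j\tilde\rho^j_t$ on the network (plus the separate per-vertex estimates), which is what the later compactness arguments actually need; it does \emph{not} yield the sum of per-edge distances with independently chosen test functions, and the flow-through example above shows that this stronger per-edge form (which is what \eqref{eq:Abschaetzungboundedlipschitzdistance} literally asserts, and what the paper's own intermediate inequality $\sum_j|\tfrac{dl^j_t}{dt}|\le\cdots$ would require) cannot hold. Finally, a small bookkeeping point: the factor $2$ in $C_\kappa$ does not account for boundary fluxes, as you guessed; it arises because the edge-curve bound and the vertex-curve bound, each with constant $\sqrt{2(n+m)}\max\{1,1/\kappa\}$, are added at the end.
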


\begin{proof}
    For (i) and (ii) we refer to \cite[Prop. 3.5]{monsaingeon_new_2020}. For (iii) note that the bounded Lipschitz-distance metrises the narrow convergence of measures, thus it suffices to establish equation \eqref{eq:Abschaetzungboundedlipschitzdistance}.
        We only sketch the proof for $t \mapsto \rho_t^j$, as the argument is similar for the curve $t \mapsto \gamma_t^i$.
        For fixed $\Phi \in C^1(\mathcal{G})$ we will estimate from below the time derivatives of 
        \begin{align*}
            l^j_t \coloneqq \int_{\overline{E^j}} \Phi(x) ~ d\rho^j_t(x).
        \end{align*}
        Using the weak form of the continuity equations and arguing as in \cite[Prop. 3.5]{monsaingeon_new_2020} we obtain
        \begin{align*}
            \sum_{j=1}^m \Big\vert \frac{d l^j_t}{dt} \Big\vert 
            \leq \Vert \partial_x \Phi \Vert_\infty \sum_{j=1}^m \int_{\overline{E^j}} \vert u_t^j \vert ~ d \rho^j_t + \Vert \Phi \Vert_\infty  \sum_{i=1}^n \vert f^i_t \vert \; .
        \end{align*}
        Next, we use the facts that
        \begin{itemize}
            \item \(
            \begin{aligned}[t]
                \Vert u^j_t \Vert_{L^1(\overline{E}_j;d\rho_t^j)} \leq \Vert u^j_t \Vert_{L^2(\overline{E}_j;d\rho_t^j)} \text{ as well as } \Vert \rho^j_t \Vert \leq 1 \text{ and } \gamma^i_t \leq 1 \text{ (using (i))},
               \\
            \end{aligned}
          \)
            \item \(
            \begin{aligned}[t]
               \sum_{i=1}^n \sqrt{a_i} \leq \sqrt{n} \Big( \sum_{i=1}^n a_i \Big)^\frac{1}{2} \text{ for all $a_i \geq 0$, by H\"older's inequality}, \\
            \end{aligned}
          \)
        \end{itemize}
        to calculate
        \begin{align*}
            \sum_{j=1}^m  \Big\vert \frac{d l^j_t}{dt} \Big\vert 
            &\leq \big( \Vert \partial_x \Phi \Vert_\infty + \Vert \Phi \Vert_\infty  \big) \Big(\sum_{j=1}^m \int_{\overline{E^j}} \vert u_t^j \vert~ d \rho^j_t +  \sum_{i=1}^n \vert f^i_t \vert  \Big)
            \\
            &\leq \sqrt{n+m} \big( \Vert \partial_x \Phi \Vert_\infty + \Vert \Phi \Vert_\infty  \big) \Big(\sum_{j=1}^m \int_{\overline{E^j}} \vert u_t^j \vert^2~ d \rho^j_t +  \sum_{i=1}^n  \vert f^i_t \vert ^2  \Big)^\frac{1}{2} \in L^2(0,1)  .
        \end{align*}
        Thus $l$ is absolutely continuous, and we obtain
        \begin{align*}
            & \sum_{j=1}^m \Big\vert \int_{\overline{E^j}} \Phi ~d(\rho^j_t - \rho^j_s ) \Big\vert 
            = \sum_{j=1}^m \vert l^j_t - l^j_s \vert 
           \leq \sum_{j=1}^m \int_s^t \Big\vert \frac{d l^j_\tau}{d\tau} \Big\vert ~d\tau 
            \leq \sum_{j=1}^m \Big\Vert \frac{d l^j_t}{d\tau} \Big\Vert_{L^2(0,1)} \vert t- s\vert^\frac{1}{2}
            \\
            &\leq \sqrt{n+m} \big( \Vert \partial_x \Phi \Vert_\infty + \Vert \Phi \Vert_\infty  \big) \Big(\sum_{j=1}^m \iint_{[0,1] \times \overline{E^j}} \vert u_t^j \vert^2~ d \rho^j_t dt +  \sum_{i=1}^n \int_0^1 \vert f^i_t \vert^2 ~ dt \Big)^\frac{1}{2}  \vert t- s\vert^\frac{1}{2} 
            \\
            &\leq \sqrt{2(n+m)} \big(\Vert \partial_x \Phi \Vert_\infty + \Vert \Phi \Vert_\infty  \big) \Big(\sum_{j=1}^m \iint_{[0,1] \times \overline{E^j}} \frac{\vert u_t^j \vert^2}{2}~ d \rho^j_t dt 
            \\
            &\hspace{20ex} + \frac{1}{\kappa^2}  \sum_{i=1}^n  \int_0^1 \kappa^2 \frac{ \vert f^i_t \vert^2}{2 \gamma^i} ~ dt \Big)^\frac{1}{2}  \vert t- s\vert^\frac{1}{2} 
            \\
            &\leq \sqrt{2(n+m)} \big(\Vert \partial_x \Phi \Vert_\infty + \Vert \Phi \Vert_\infty  \big) \max\Big\lbrace 1, \frac{1}{\kappa} \Big\rbrace \sqrt{\mathcal{A}(\pmb{\mu_t})} \vert t- s\vert^\frac{1}{2} \; .
        \end{align*}
    Taking the supremum over all $\Phi$ with $\Vert  \Phi \Vert_\infty + \operatorname{Lip}(\Phi) \leq 1$, we recover the desired estimate for the bounded Lipschitz distance. Analogously we obtain the same inequality for $\gamma^i_t$, and thus the constant in the proposition is given by 
    \begin{align*}
        C_\kappa = 2\sqrt{2(n+m)} \max\Big\lbrace 1, \frac{1}{\kappa}\Big\rbrace \; . 
    \end{align*}
\end{proof}

\subsection{Duality and Existence}
\begin{definition}
    \label{def:quantityW}
    For any admissible network concentration $(\pmb{\rho_0}, \pmb{\rho_1}, \pmb{\gamma_0}, \pmb{\gamma_1})$ and $\pmb{\mu_t} \in \mathcal{CE}(\pmb{\rho_0}, \pmb{\rho_1}, \pmb{\gamma_0}, \pmb{\gamma_1})$, we define the quantity
    \begin{align}
        \label{eq:distancefunctional}
        \mathcal{W}_\kappa^2(\pmb{\rho_0},\pmb{\rho_1},\pmb{\gamma_0},\pmb{\gamma_1}) \coloneqq \inf_{\pmb{\mu_t} \in \mathcal{CE}(\pmb{\rho_0},\pmb{\rho_1},\pmb{\gamma_0},\pmb{\gamma_1})}  \mathcal{A}(\pmb{\rho_t}, \pmb{F_t}; \pmb{\gamma_t}, \pmb{f_t}).
    \end{align}
\end{definition}

It will turn out in Proposition \ref{prop:quantityisdistance} that this quantity is even a distance but first of all we have to show that it is always well-defined.
\begin{lemma}
    \label{lem:distancefinite}
    For every $(\pmb{\rho_0},\pmb{\rho_1},\pmb{\gamma_0},\pmb{\gamma_1}) \in \mathcal{M}_+(\mathcal{E}) \times \mathcal{M}_+(\mathcal{E}) \times \mathcal{M}_+(\mathcal{V}) \times \mathcal{M}_+(\mathcal{V})$ such that $\varsigma_0,\, \varsigma_1 \in \mathcal{P}(\Omega_{\mathcal{G}})$, the quantity $\mathcal{W}_\kappa^2(\pmb{\rho_0},\pmb{\rho_1},\pmb{\gamma_0},\pmb{\gamma_1})$ is finite.
\end{lemma}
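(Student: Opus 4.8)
The plan is to exhibit, for the given boundary data, a single quadruple $(\pmb{\rho_t},\pmb{F_t},\pmb{\gamma_t},\pmb{f_t}) \in \mathcal{CE}(\pmb{\rho_0},\pmb{\rho_1},\pmb{\gamma_0},\pmb{\gamma_1})$ whose action $\mathcal{A}$ is finite; since $\mathcal{W}_\kappa^2$ is the infimum of $\mathcal{A}$ over this set, the existence of one competitor with finite cost suffices. Because the action density is $1$-homogeneous, finite-action curves may be concatenated and reparametrised in time at the cost of a bounded multiplicative factor, so it is enough to connect each of the two data sets to a fixed reference configuration and then glue the forward curve from $\varsigma_0$ to the time-reversed curve into $\varsigma_1$. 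As reference I take all the mass placed on a single vertex, say $\bar\gamma^1 = 1$, $\bar\gamma^i = 0$ for $i \geq 2$ and $\bar\rho^j = 0$ on every edge (here connectedness of $\mathcal{G}$ is used).

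It then remains to build a finite-action curve from an arbitrary admissible $(\pmb{\rho_0},\pmb{\gamma_0})$ to this reference, which I do from two elementary building blocks. First, transport along an edge realised by a velocity field $v^j_t$ with $F^j_t = v^j_t \rho^j_t$ contributes
\[
\iint_{Q_{\overline{E^j}}} \frac{|F^j_t|^2}{2\rho^j_t}\,dx\,dt = \tfrac12\iint_{Q_{\overline{E^j}}} |v^j_t|^2 \, d\rho^j_t\,dt \leq \tfrac12 \|v^j\|_\infty^2 \sup_t \|\rho^j_t\|,
\]
which is finite as soon as $v^j$ is bounded and the mass stays bounded; in particular emptying an edge onto its endpoints, or later refilling it from a vertex, costs finitely much. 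Second, mass exchange at a vertex: writing $g^i_t \coloneqq \sqrt{\gamma^i_t}$ one has $f^i_t = 2 g^i_t \dot g^i_t$, hence
\[
\kappa^2\int_0^1 \frac{|f^i_t|^2}{2\gamma^i_t}\,dt = 2\kappa^2 \int_0^1 |\dot g^i_t|^2 \, dt,
\]
so the Fisher--Rao cost of a vertex is the kinetic energy of $t \mapsto \sqrt{\gamma^i_t}$. Consequently \emph{any} Lipschitz path of $\sqrt{\gamma^i_t}$ has finite cost, even one passing through $0$; the straight line $g^i_t = (1-t)\sqrt{\gamma^i_0} + t\sqrt{\bar\gamma^i}$ creates or annihilates vertex mass at the finite cost $2\kappa^2\big(\sqrt{\gamma^i_0}-\sqrt{\bar\gamma^i}\big)^2$.

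Using these blocks I proceed in finitely many phases along a spanning tree rooted at $V^1$: first empty every edge onto its endpoints, and then, processing the tree from the leaves towards the root, move the mass accumulated at each vertex across the incident tree edge into the next vertex, until all mass is gathered at $V^1$. Each phase consists of bounded-velocity edge transport together with the induced vertex mass changes, and the two displays above make every phase finite; summing the finitely many contributions yields a finite-action curve to the reference, which is exactly what we need.

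The step I expect to be delicate is the coupling between the two mechanisms: the vertex source $f^i_t$ is not free but is forced to equal the net boundary flux $\sum_{j\in Z(V^i)} F^j_t(V^i)\cdot\nu_{i,j}$ of the adjacent edges. The time profile of the edge transport must therefore be chosen so that the resulting $\gamma^i_t$ has a Lipschitz square root; concretely, whenever a vertex is momentarily empty the incoming flux must be made to vanish like $t$ (equivalently $\gamma^i_t \sim c\,t^2$), since a naive constant-rate deposition onto an empty vertex makes $\int_0^1 |f^i_t|^2/\gamma^i_t\,dt$ diverge logarithmically. Arranging the velocity fields and the ordering of the phases so that this compatibility holds at every vertex is the only real work; the finiteness of each contribution is then immediate from the two estimates above.
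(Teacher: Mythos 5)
Your proposal is correct and follows essentially the same strategy as the paper: reduce, via symmetry and concatenation/time-rescaling (justified by the $1$-homogeneity of the action), to connecting the given data to a reference configuration with all mass concentrated at a single vertex, which is achieved in finitely many finite-cost elementary steps using the finiteness and connectedness of the network. The only difference is that the paper imports these elementary steps from \cite[Lemma 3.7]{monsaingeon_new_2020}, whereas you construct them explicitly --- bounded-velocity edge transport plus the square-root (Hellinger) reparametrisation of the vertex cost with the $\gamma^i_t \sim c\,t^2$ deposition profile to avoid the logarithmic divergence --- which is precisely the content of the cited lemma.
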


\begin{proof}
    Let $V^k \in \mathcal{V}$ be an arbitrary vertex. We will  show that any element $(\pmb{\rho_0, \gamma_0})$ can be connected to $(\pmb{0}, \pmb{\delta_{V^k}})$ with finite cost,
    where $\pmb{\delta_{V^k}} = (0,\ldots, \delta_{V^k}, \ldots, 0) \in \R^n$ denotes the concentration on the vertices with $\delta$ being the Dirac-measure.
    By symmetry $(\pmb{\rho_0}, \pmb{\delta_{V^k}})$ can also be connected to any other $(\pmb{\rho_1}, \pmb{\gamma_1})$, thus connecting $\pmb{\rho_0}$ with $\pmb{\rho_1}$ with finite cost.
    As proven in \cite[Lemma 3.7]{monsaingeon_new_2020}, it is enough to show that this can be done in a finite number of elementary steps with finite cost combined with a time re-scaling argument.
    As we assumed that our network is finite and connected, the assertion following by from finitely many consecutive applications of \cite[Lemma 3.7]{monsaingeon_new_2020}.
\end{proof}

Closely following to \cite{chizat_unbalanced_2019}, we now proof existence of minimisers of \eqref{eq:distancefunctional} using a duality argument and the Fenchel-Rockafellar theorem, see \cite{rockafellar_duality_1967} or \cite[Thm. 31.1]{rockafellar_convex_1970}.

\begin{theorem}[Fenchel-Rockafellar theorem]
    \label{th:Rockafellar}
    Let $X_1, X_2$ be normed vector spaces with topological duals $X_1^*, X_2^*$ and $L \colon X_1 \rightarrow X_2$ be a bounded linear operator with adjoint $L^*:X_2^* \to X_1^*$. Furthermore, let $\mathcal{F}\colon X_1 \rightarrow \R \cup \lbrace - \infty \rbrace$ and $\mathcal{G}\colon X_2 \rightarrow \R \cup \lbrace - \infty \rbrace$ be two proper, concave functions. 
    If there exists $x \in X_1$ such that $\mathcal{F}(x)$ is finite and $\mathcal{G}$ is continuous at $y = Lx$, then
    \begin{align*}
        \sup_{x\in X_1} \lbrace \mathcal{F}(x) + \mathcal{G}(Lx) \rbrace
        = \min_{y^* \in X_2^*} \lbrace - \mathcal{F}^*(L^*y^*) - \mathcal{G}^*(y^*) \rbrace,
    \end{align*}
    where $\mathcal{F}^*$ denotes the Fenchel-Legendre conjugate of $\mathcal{F}$, respectively $\mathcal{G}$.
    Moreover, if there exists $y^* \in X_2^*, x \in X_1$ such that $L^*y^* \in \partial(-\mathcal{F})(x)$ and $Lx \in \partial (- \mathcal{G}^*)(y^*)$ then $x$ achieves the $\sup$ and $y^*$ is a minimizer.
\end{theorem}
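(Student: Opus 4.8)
The statement is the classical Fenchel--Rockafellar duality theorem in its concave formulation, so the plan is to follow the standard two-step scheme: first a cheap \emph{weak duality} estimate, then the harder \emph{strong duality with attainment}, which rests on a Hahn--Banach separation argument activated by the constraint qualification that $\mathcal{F}$ be finite at some $x_0$ while $\mathcal{G}$ is continuous at $Lx_0$.

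For weak duality I would invoke the Fenchel--Young inequality in the form $\mathcal{F}(x) \le \langle x^*, x\rangle - \mathcal{F}^*(x^*)$ and $\mathcal{G}(y) \le \langle y^*, y\rangle - \mathcal{G}^*(y^*)$, valid for all primal and dual arguments by definition of the conjugates. Evaluating the first at $x^* = L^* y^*$ (with the sign convention used for the conjugate in the statement), the second at $y = Lx$, and using the adjoint identity $\langle L^* y^*, x\rangle = \langle y^*, Lx\rangle$, the cross terms cancel and we obtain $\mathcal{F}(x) + \mathcal{G}(Lx) \le -\mathcal{F}^*(L^* y^*) - \mathcal{G}^*(y^*)$ for every $x \in X_1$ and $y^* \in X_2^*$. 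Taking the supremum on the left and the infimum on the right yields $\sup \le \inf$; together with the feasible point $x_0$ this shows that the primal value $\alpha \coloneqq \sup_x\{\mathcal{F}(x) + \mathcal{G}(Lx)\}$ is finite.

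To close the gap and produce a dual minimiser I would introduce the perturbation (value) function $v(z) \coloneqq \sup_{x \in X_1}\{\mathcal{F}(x) + \mathcal{G}(Lx + z)\}$ on $X_2$. Since $(x,z)\mapsto \mathcal{F}(x) + \mathcal{G}(Lx+z)$ is jointly concave (the inner map $(x,z)\mapsto Lx+z$ is affine), its partial supremum $v$ is concave, with $v(0) = \alpha$. The constraint qualification now does the decisive work: from $v(z) \ge \mathcal{F}(x_0) + \mathcal{G}(Lx_0 + z)$ and continuity of $\mathcal{G}$ at $Lx_0$, the function $v$ is finite and bounded below on a neighbourhood of $0$, and a concave function that is locally bounded below at an interior point of its domain is continuous there and has nonempty superdifferential. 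Choosing $y^* \in \partial v(0)$ and unwinding the definitions of $\partial v(0)$ and of the conjugates exhibits $y^*$ (up to the sign convention) as a point where $v(0) = -\mathcal{F}^*(L^* y^*) - \mathcal{G}^*(y^*)$, so the duality gap vanishes and the dual infimum is attained, i.e.\ it is a minimum. The concluding optimality conditions then come for free: equality throughout the weak-duality chain forces equality in both Fenchel--Young inequalities, which is precisely $L^* y^* \in \partial(-\mathcal{F})(x)$ and $Lx \in \partial(-\mathcal{G}^*)(y^*)$.

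I expect the passage from weak to strong duality to be the only genuine obstacle. Everything else is bookkeeping with conjugates and the adjoint relation, but the existence of $y^* \in \partial v(0)$ is a nontrivial separation statement: it is exactly here that the topological hypothesis on $\mathcal{G}$ is indispensable, since local boundedness of $v$ near the origin is what supplies a supporting hyperplane (via Hahn--Banach) and rules out both a positive duality gap and non-attainment of the dual problem.
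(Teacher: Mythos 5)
The paper does not prove this statement at all: it is quoted as a classical result, with the proof deferred to the cited references (Rockafellar's duality paper and \cite[Thm.~31.1]{rockafellar_convex_1970}). So the only meaningful comparison is with the standard literature proof, and that is precisely what you have reproduced: weak duality from Fenchel--Young plus the adjoint identity, then strong duality and dual attainment via the concave perturbation function $v(z)=\sup_{x}\{\mathcal{F}(x)+\mathcal{G}(Lx+z)\}$, whose superdifferential at $0$ is nonempty because the constraint qualification makes $v$ locally bounded below (hence continuous) there, and finally the optimality conditions read off from equality in the Fenchel--Young inequalities. This is the canonical Hahn--Banach/separation argument, and your identification of the separation step as the only place where the topological hypothesis on $\mathcal{G}$ enters is exactly right.

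Two points would need care in a full write-up. First, weak duality together with the feasible point $x_0$ gives $\alpha>-\infty$ but \emph{not} $\alpha<+\infty$: the dual infimum could a priori be $+\infty$, so the degenerate case $\alpha=+\infty$ must be dispatched separately (it is trivial, since then every $y^*$ has dual value $+\infty$); finiteness of $\alpha$ is subsequently what lets concavity exclude the value $+\infty$ for $v$ near the origin, which your continuity argument implicitly uses. Second, the sign conventions are genuinely delicate and should not be left as ``up to the sign convention'': with the convention $-\mathcal{F}^*=(-\mathcal{F})^*$ used in this paper, the superdifferential computation naturally produces $-\mathcal{F}^*(-L^*y^*)-\mathcal{G}^*(y^*)$ rather than $-\mathcal{F}^*(L^*y^*)-\mathcal{G}^*(y^*)$; indeed the paper itself invokes the theorem in exactly that form, with the extra minus sign inside the argument, in the proof of Theorem \ref{thm:dualityexistence}, so the hidden substitution $y^*\mapsto -y^*$ should be made explicit to reconcile your chain of inequalities with the statement as written.
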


Next, we define two subsolution sets, one corresponding to the edges and one to the vertices,
\begin{align}
    \begin{split}
    \label{eq:DefinitionS}
    S_E &\coloneqq \Big\lbrace (\alpha, \beta ) \in \R^+ \times \R \colon \qquad \alpha + \frac{\vert \beta \vert^2}{2} \leq 0 \Big\rbrace,
    \\
    S_{V^i}^\kappa &\coloneqq \Big\lbrace (a,b,\pmb{c}) \in \R^+ \times \R \times \R^{\vert Z(V^i) \vert} \colon \quad a + \frac{\vert b - \frac{1}{|Z(V^i)|}\sum_{j \in Z(V^i)} c_j\vert^2}{2\kappa^2} \leq 0 \Big\rbrace
    \end{split}
\end{align}
and the convex indicator functions of these sets
\begin{align*}
    \iota_{S_E} (\alpha, \beta ) \coloneqq \begin{cases} 0 &\text{if } (\alpha, \beta) \in S_E, \\ +\infty &\text{otherwise}, \end{cases}
    \quad 
    \text{and}
    \quad
    \iota_{S_{V^i}^\kappa} (a,b,\pmb{c}) \coloneqq \begin{cases} 0 &\text{if } (a,b,\pmb{c}) \in S_{V^i}^\kappa, \\ +\infty &\text{otherwise}. \end{cases}
\end{align*}
For given $i,j$, the variables $\alpha, \beta$ will be dual multipliers to $\rho^j_t, F^j_t$ and $a$ will be dual to $\gamma^i_t$. The variable $b$ will be dual to $f^i_t$ and $\pmb {c}$ will be dual to $\sum_{j \in Z(V^i)} F^j_t \cdot \nu_{i,j}$.
Their sum $b - \sum c$ translates to the fact that no mass gets lost at a vertex.
In the following we will take $(\alpha, \beta ) = ( \partial_t \phi^j_t, \partial_x \phi^i_t$) and $(a,b,c) = (\partial_t \psi^i_t, \psi, \phi \vert_{\mathcal{V}})$ for suitable test functions $\phi^j_t \in C^1, \psi^i_t \in C^1$. Then, $(\partial_t \phi^j_t, \partial_x \phi^i_t) \in S_E$ and $\partial_t \psi^i_t, \psi, \phi \vert_{\mathcal{V}} \in S_{V^i}^\kappa$ mean that $\phi^j_t, \psi^i_t$ are (smooth) sub-solutions of the Hamilton-Jacobi system
\begin{align}
    \label{eq:Lagrangedual}
    \partial_t \phi^j_t + \frac{1}{2} \vert \partial_x \phi^j_t \vert ^2 \leq 0 
    \quad \text{and} \quad
    \partial_t \psi^i_t + \frac{1}{2 \kappa^2} \Big\vert \psi^i_t - \frac{1}{|Z(V^i)|}\sum_{j\in Z(V^i)} \phi^j_t \Big\vert^2 \leq 0,
\end{align}
(see Appendix \ref{sec:derivationHJ} for the derivation of the Hamilton-Jacobi equations by formal Lagrangian calculus).
As in Monsaingeon \cite{monsaingeon_new_2020}, this system of coupled Hamilton-Jacobi equations is invariant under the addition of a common constant to $\psi_t^i$ and $\phi_t^j$, $j \in Z(V^i)$. 
Consequently, the convex closed set $S_{V^i}^\kappa$ is thus invariant under diagonal shifts $b+k , \pmb{c} +k$, for every constant $k \in \R$.

\begin{lemma} 
    \label{lem:indicatorislagrangian}
    For $(\rho^j_t, F^j_t) \in \R^+ \times  \R, (\gamma^i_t, f^i_t, \pmb{\eta^i_t}) \in \R^+ \times \R \times \R^{\vert Z(V^i)\vert}$ the convex conjugates $\iota^*_{S_E}$ and $\iota^*_{S_{V^i}^\kappa}$ can be identified with the generalised Lagrangians, i.e.
    \begin{align}
        \notag
        \iota^*_{S_E}(\rho^j_t, F^j_t) &= A(\rho^j_t, F^j_t) = \left\{\begin{array}{cl} \frac{|F^j_t|^2}{2 \rho^j_t} & \text{ if } \rho^j_t > 0,\\ 0 & \text{ if } (\rho^j_t, F^j_t) = (0,0),\\ + \infty & \text{ otherwise.} \\ \end{array}\right.
        \\
        \iota^*_{S_{V^i}^\kappa}(\gamma^i_t, f^i_t, \pmb{\eta^i_t}) &= \left\{\begin{array}{cl} \kappa^2 \frac{|f^i_t|^2}{2 \gamma^i_t} & \text{ if } \gamma^i_t > 0 \text{ and } f^i_t  + \sum_{k\in Z(V^i)} \eta_t^{i,k} =0 ,\\ 0 & \text{ if } (\gamma^i_t, f^i_t +\sum_{k\in Z(V^i)} \eta_t^{i,k})  = (0,0),\\ + \infty & \text{ otherwise.} \\ \end{array}\right.
        \label{eq:indicatorlagrangianV}
    \end{align}
    where $\pmb{\eta^{i,1}_t} = \eta^{i,1}_t, \ldots, \eta_t^{i,Z(V^i)}$ denotes the vector of neighbouring edges originating or ending at the vertex $V^i$.
    The conditions $f^i_t  + \sum_{k \in Z(V^i)} \eta_t^{i,k} =0$ reflects by duality the invariance of $S_{V^i}^\kappa$ under diagonal shifts $b+kC , \pmb{c} +kC$ discussed in the previous paragraph.
    An alternative version of \eqref{eq:indicatorlagrangianV} in terms of the \textit{extended action} is given by
    \begin{align}
        \label{eq:indicatorlagrangianVextended}
        \sum_{i=1}^n\iota^*_{S_{V^i}}(\gamma^i_t, f^i_t, \pmb{\eta_t^i}) = \sum_{i=1}^nA(\gamma^i_t, f^i_t) + \begin{cases} 0 & \text{if } f^i_t +  \sum_{k\in  Z(V^i) } \eta_t^{i,k}  = 0, \\ +\infty & \text{otherwise}. \end{cases} = \bar A(\pmb{\gamma_t},\pmb{f_t}, \pmb{\eta_t}).
    \end{align}
\end{lemma}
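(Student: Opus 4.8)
The plan is to recognise both identities as computations of \emph{support functions}: for a closed convex set $S$, the convex conjugate of its indicator is $\iota_S^*(y) = \sup_{x \in S}\langle x, y\rangle$. Thus the lemma reduces to evaluating two explicit constrained suprema and matching the outcome with the (extended) action density, carrying out in each case a distinction according to the sign of the mass variable, since the $1$-homogeneous density $A$ is defined by its lower-semicontinuous extension at the boundary.

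For the edge term I would compute $\iota_{S_E}^*(\rho^j_t, F^j_t) = \sup\{\alpha\rho^j_t + \beta F^j_t : \alpha + \tfrac12|\beta|^2 \le 0\}$. Since the objective is nondecreasing in $\alpha$ when $\rho^j_t \ge 0$, the constraint is active and one substitutes $\alpha = -\tfrac12|\beta|^2$, reducing to the one-dimensional concave problem $\sup_\beta\{-\tfrac12|\beta|^2\rho^j_t + \beta F^j_t\}$. For $\rho^j_t > 0$ the interior maximiser $\beta = F^j_t/\rho^j_t$ gives $|F^j_t|^2/(2\rho^j_t)$; for $\rho^j_t = 0$ the objective is linear in $\beta$, hence finite (and equal to $0$) exactly when $F^j_t = 0$; and for $\rho^j_t < 0$ sending $\alpha \to -\infty$ makes the supremum $+\infty$. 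This reproduces $A(\rho^j_t, F^j_t)$ in all three regimes.

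For the vertex term the essential observation is that $S_{V^i}^\kappa$ is the preimage of $S_E$ under the linear map $(a,b,\pmb c) \mapsto (a, (b - \bar c)/\kappa)$, where $\bar c = \tfrac{1}{|Z(V^i)|}\sum_{j} c_j$. I would therefore introduce the variables $\beta' = (b-\bar c)/\kappa$, the mean $\bar c$, and the component $\pmb c^\perp$ of $\pmb c$ orthogonal to the diagonal, and rewrite the pairing as $a\gamma^i_t + \kappa\beta' f^i_t + \bar c\,(f^i_t + \sum_k \eta^{i,k}_t) + \langle \pmb c^\perp, \pmb\eta^i_t\rangle$. Because $\bar c$ ranges over all of $\R$, finiteness of the supremum forces the flux-balance constraint $f^i_t + \sum_{k} \eta^{i,k}_t = 0$ — this is precisely the diagonal-shift invariance of $S_{V^i}^\kappa$ recorded before the lemma, and encodes mass conservation at the vertex. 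Once this constraint holds the remaining supremum over $(a,\beta') \in S_E$ is exactly $\iota_{S_E}^*(\gamma^i_t, \kappa f^i_t) = A(\gamma^i_t, \kappa f^i_t)$, which equals $\kappa^2|f^i_t|^2/(2\gamma^i_t)$ for $\gamma^i_t > 0$ and degenerates as in the edge case, yielding the three-case formula. The extended-action form then follows by summing over $i$ and reading the indicator of $\{f^i_t + \sum_k \eta^{i,k}_t = 0\}$ as the additional term defining $\bar A$.

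The main technical point is the vertex computation: unlike the scalar edge case, $\pmb c$ is multidimensional, so one must carefully separate the direction producing the linear constraint (the mean $\bar c$) from the behaviour in the remaining directions and check that the supremum is finite on exactly the stated set before evaluating it. The degenerate boundary cases $\gamma^i_t = 0$ (and $\rho^j_t = 0$) also require separate treatment, since there $A$ is not given by the formula $|b|^2/(2a)$; here the closedness and $1$-homogeneity of $S_E$ and $S_{V^i}^\kappa$ guarantee that the support function recovers precisely the lower-semicontinuous extension, closing the argument.
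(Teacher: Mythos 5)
Your overall strategy — computing $\iota_S^*$ as a support function, exactly as in the paper's cited reference (Santambrogio, Lemma 5.17) — is the same as the paper's, and your edge computation is complete and correct. The gap is in the vertex case, and it sits precisely at the step you defer: after writing the pairing as
\begin{equation*}
a\gamma^i_t + \kappa\beta' f^i_t + \bar c\Big(f^i_t + \sum_{k\in Z(V^i)}\eta^{i,k}_t\Big) + \langle \pmb{c}^\perp, \pmb{\eta}^i_t\rangle,
\end{equation*}
you never take the supremum over $\pmb{c}^\perp$. Since the constraint defining $S^\kappa_{V^i}$ involves $\pmb{c}$ only through its mean $\bar c$, the component $\pmb{c}^\perp$ ranges freely over the whole hyperplane $\mathbf{1}^\perp$, so $\sup_{\pmb{c}^\perp}\langle\pmb{c}^\perp,\pmb{\eta}^i_t\rangle = +\infty$ unless $\pmb{\eta}^i_t$ is orthogonal to every zero-mean vector, i.e.\ unless all components $\eta^{i,k}_t$, $k \in Z(V^i)$, coincide. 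Carried out honestly, your computation therefore gives $\iota^*_{S^\kappa_{V^i}}(\gamma^i_t,f^i_t,\pmb{\eta}^i_t) = A(\gamma^i_t,\kappa f^i_t)$ on the set where $\pmb{\eta}^i_t = \bar\eta\,\mathbf{1}$ \emph{and} $f^i_t + |Z(V^i)|\bar\eta = 0$, and $+\infty$ elsewhere — a strictly smaller finiteness set than the one claimed in \eqref{eq:indicatorlagrangianV}.

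This is not a removable technicality. Take $|Z(V^i)|=2$, $\kappa=1$ and $(\gamma^i_t,f^i_t,\pmb{\eta}^i_t)=(1,0,(1,-1))$: this satisfies the conditions of \eqref{eq:indicatorlagrangianV} ($\gamma^i_t>0$ and $f^i_t+\eta^{i,1}_t+\eta^{i,2}_t=0$), so the lemma asserts the value $0$; but the feasible choice $a=b=0$, $\pmb{c}=(M,-M)$ (feasible because $\bar c = 0$) gives pairing $2M \to +\infty$. So either the conjugate must carry the additional diagonal constraint on $\pmb{\eta}^i_t$ — in which case you have not proved the lemma as stated, and the discrepancy matters downstream, because in Theorem \ref{thm:dualityexistence} the $\eta^{i,j}_t$ are identified with the fluxes $-F^j_t\cdot\nu_{i,j}$, which in general differ across the edges meeting at a vertex — or the set $S^\kappa_{V^i}$ would have to be defined differently for the stated identity to hold. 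The sentence in which you promise to "check that the supremum is finite on exactly the stated set" is exactly where the argument breaks: performing that check contradicts the identity you set out to prove, so the proposal as written does not establish \eqref{eq:indicatorlagrangianV}.
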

The proof of this lemma uses calculations similar to those in \cite[Lemma 5.17]{santambrogio_optimal_2015}. In particular note that the normalisation factor $\frac{1}{|Z(V^i)|}$ that appears in the definition of $S^\kappa_{V^i}$ yields the constraint $f^i_t + \sum_{k \in Z(V^i) } \eta_t^{i,k}  = 0$.

As a next step we will define the concept of recession functions that will be needed in the following duality theorem:
\begin{definition}[recession function {\cite[Chapter 4]{rockafellar_integrals_1968}}]
    \label{def:recessionfunction}
    Let $T$ be an arbitrary set. A function $f^\infty$ on $[0,T] \times \R^n$ is called recession-function of a function $f^* \colon [0,T] \times \R^n \rightarrow \R^1 \cup \lbrace + \infty\rbrace$, if
    \begin{align*}
        f^\infty(t,w) = \lim_{\epsilon \rightarrow + \infty} [ f^*(t,x^* + \epsilon w) - f^*(t,x^*)]/ \epsilon
    \end{align*}
    whenever $x^* \in \R^n$ satisfies $f^*(t,x^*) < \infty$. 
\end{definition}

Note that we wrote $f^*$ to emphasise that this function will be a convex conjugate of a function $f$ in the following. The fact that convex-conjugates $f^*(t, \cdot)$ are lower semi-continuous, convex and not identically to $+\infty$, implies that $f^\infty(t, \cdot)$ is a well-defined, lower continuous, positively homogeneous, convex function from $\R^n$ to $\R \cup \lbrace + \infty \rbrace$, vanishing at 0, see \cite[§ 8]{rockafellar_convex_1970}.

\vspace{2ex}
To improve the readability of the following duality theorem, we define the space
\begin{align}
    \label{eq:definitionOmega}
    \mathcal{C} \coloneqq \big[ C^1(\mathcal{Q}_{\overline{E}^1}) \times \ldots \times C^1(\mathcal{Q}_{\overline{E}^m}) \big] \times \big[ C^1(\mathcal{Q}_{V^1}) \times \ldots \times C^1(\mathcal{Q}_{V^n}) \big]
\end{align}
and for $\phi^j_t \in C^1(\mathcal{Q}_{\overline{E^j}}), \psi^i_t \in C^1(Q_{V^i}), \; \pmb{\phi_t} = (\phi^1_t, \ldots , \phi^m_t), \pmb{\psi_t}= (\psi^1_t, \ldots , \psi^n_t)$ we define the primal functional 
\begin{align*}
    \mathcal{J}^\kappa(\pmb{\phi_t}, \pmb{\psi_t}) &\coloneqq \sum_{j=1}^m \int_{\overline{E^j}} \big(\phi^j_1 d \rho^j_1 - \phi^j_0 \; d \rho^j_0 \big)
    + \sum_{i=1}^n ( \psi^i_1 \gamma^i_1   - \psi^i_0 \gamma^i_0  )
    \\
    &\qquad + \sum_{j=1}^m \iint_{\mathcal{Q}_{\overline{E^j}}} \iota_S( \partial_{t} \phi^j_t, \partial_x \phi^j_t) ~ dxdt - \sum_{i=1}^n \int_0^1 \iota_{S_{V^i}^\kappa}(\partial_t \psi^i_t, \psi^i_t, \phi_t\vert_{V^i}) ~ dt.
\end{align*}
These definitions enable us to prove the duality theorem:

\begin{theorem}[Duality theorem]
    \label{thm:dualityexistence}
    Given admissible $(\pmb{\rho_0}, \pmb{\rho_1}, \pmb{\gamma_0}, \pmb{\gamma_1})$ we have the duality
    \begin{align}
        \label{eq:duality}
        \mathcal{W}_\kappa^2(\pmb{\rho_0},\pmb{\rho_1},\pmb{\gamma_0},\pmb{\gamma_1}) = \sup_\mathcal{C} \mathcal{J}^\kappa (\pmb{\phi_t}, \pmb{\psi_t})
    \end{align}
    where  $\inf = \min$ is attained in \eqref{eq:distancefunctional}.
\end{theorem}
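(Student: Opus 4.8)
The plan is to realise $\sup_{\mathcal{C}} \mathcal{J}^\kappa$ as the left-hand side of the Fenchel--Rockafellar identity in Theorem \ref{th:Rockafellar} and to identify its right-hand side with $\inf_{\mathcal{CE}} \mathcal{A} = \mathcal{W}_\kappa^2$. To this end I would take $X_1 = \mathcal{C}$ equipped with the $C^1$-norm, let $X_2$ be the product of continuous-function spaces on which the derivatives and traces of the test functions live (so that $X_2^*$ is the corresponding product of measures, where the dual tuple $(\pmb{\rho_t},\pmb{F_t},\pmb{\gamma_t},\pmb{f_t},\pmb{\eta_t})$ resides), and define the bounded linear operator
\[
L(\pmb{\phi_t}, \pmb{\psi_t}) = \Big( (\partial_t \phi^j_t, \partial_x \phi^j_t)_{j=1}^m, \ (\partial_t \psi^i_t, \psi^i_t, (\phi^j_t\vert_{V^i})_{j \in Z(V^i)})_{i=1}^n \Big).
\]
The affine, hence concave, functional $\mathcal{F}$ on $X_1$ collects the boundary pairings with the data, $\mathcal{F}(\pmb{\phi_t}, \pmb{\psi_t}) = \sum_{j} \int_{\overline{E^j}}(\phi^j_1\, d\rho^j_1 - \phi^j_0\, d\rho^j_0) + \sum_{i}(\psi^i_1 \gamma^i_1 - \psi^i_0 \gamma^i_0)$, while $\mathcal{G}$ on $X_2$ is the concave indicator that equals $0$ when the pointwise constraints $(\partial_t\phi^j_t, \partial_x\phi^j_t) \in S_E$ and $(\partial_t\psi^i_t, \psi^i_t, \phi_t\vert_{V^i}) \in S_{V^i}^\kappa$ hold and $-\infty$ otherwise. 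With these choices (both indicator terms understood as constraints, forcing $\mathcal{J}^\kappa = -\infty$ off the feasible set) one has $\mathcal{F}(x) + \mathcal{G}(Lx) = \mathcal{J}^\kappa(\pmb{\phi_t}, \pmb{\psi_t})$, exactly as in \cite{chizat_unbalanced_2019, monsaingeon_new_2020}.

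Before applying Theorem \ref{th:Rockafellar} I would check the constraint qualification: there must exist a test pair at which $\mathcal{F}$ is finite and $\mathcal{G}$ is continuous. Choosing $\phi^j_t = \psi^i_t = -Ct$ for a constant $C > 0$ gives $(\partial_t\phi^j_t, \partial_x\phi^j_t) = (-C, 0)$, so that $-C + 0 < 0$, and at each vertex $\psi^i_t - \frac{1}{|Z(V^i)|}\sum_{j\in Z(V^i)} \phi^j_t\vert_{V^i} = 0$, so the vertex constraint reads $-C < 0$ as well. Thus $Lx$ lands in the interior of the constraint set, where the concave indicator $\mathcal{G}$ is continuous, and the hypothesis of the theorem is satisfied.

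The core of the argument is then the computation of the two conjugates on the right-hand side. For $\mathcal{G}^*$ I would invoke Lemma \ref{lem:indicatorislagrangian}, which identifies the Legendre conjugates of the indicators of $S_E$ and $S_{V^i}^\kappa$ with the action density $A$ and the extended action density $\bar A$; since the dual variables $\rho^j_t, F^j_t, \gamma^i_t, f^i_t$ are only measures, the passage from these pointwise conjugates to the conjugate of the integral functional is precisely where the recession-function machinery of Definition \ref{def:recessionfunction} and Rockafellar's theory of convex integral functionals enters, the $1$-homogeneity of $A$ guaranteeing that the singular part of each measure is treated consistently. For $\mathcal{F}^*$, its linearity forces $L^*y^*$ to reproduce the prescribed boundary functional; integrating by parts and matching against the weak formulation of Definition \ref{def:weakformcontinuityequation} shows that this constraint is exactly membership $y^* \in \mathcal{CE}(\pmb{\rho_0}, \pmb{\rho_1}, \pmb{\gamma_0}, \pmb{\gamma_1})$, with the vertex flux coupling $f^i_t = \sum_{j\in Z(V^i)} F^j_t \cdot \nu_{i,j}$ emerging from the trace terms $\phi^j_t\vert_{V^i}$ and the associated dual variables $\eta^{i,j}_t$. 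Combining these, $-\mathcal{F}^*(L^*y^*) - \mathcal{G}^*(y^*)$ equals $\mathcal{A}(\pmb{\mu_t})$ for $\pmb{\mu_t} \in \mathcal{CE}$ and $+\infty$ otherwise, so the minimisation on the right-hand side of Theorem \ref{th:Rockafellar} is exactly $\inf_{\mathcal{CE}} \mathcal{A}$; the theorem then simultaneously yields the duality \eqref{eq:duality} and the attainment of this infimum as a minimum.

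I anticipate that the main obstacle is the rigorous evaluation of $\mathcal{G}^*$ as the conjugate of an integral functional over measures rather than over $L^1$-densities: one must disintegrate each $\rho^j_t$ (and likewise $\gamma^i_t$) into absolutely continuous and singular parts with respect to a reference measure and verify that the recession function of $A$ reproduces $A$ on the singular part, so that no spurious contribution is created. A secondary, more bookkeeping-heavy difficulty is confirming that $\mathcal{F}^* \circ L^*$ encodes exactly the coupled weak continuity equation, including the correct vertex balance, which requires carefully tracking the trace terms through the integration by parts.
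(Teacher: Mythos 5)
Your proposal is correct and follows essentially the same route as the paper's own proof: the same unfolding operator $L$, the same concave splitting $\mathcal{F} + \mathcal{G}\circ L$ on $\mathcal{C}$, the same constraint qualification via affine-in-time test functions (your $\phi^j_t=\psi^i_t=-Ct$ is the paper's choice with $C=1$), and the same evaluation of the conjugates through Lemma \ref{lem:indicatorislagrangian}, Rockafellar's integral-functional theorem and the recession-function argument exploiting the $1$-homogeneity of $A$, with the Kirchhoff coupling recovered by combining the finiteness conditions of $-\mathcal{F}^*$ and $-\mathcal{G}^*$. The two difficulties you flag (conjugation of integral functionals over measures, and bookkeeping of the trace/flux terms) are precisely the two technical steps the paper handles.
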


\begin{proof}
    With $ \partial_t \pmb{\psi_t} = (  \partial_t \psi^1_t, \ldots , \partial_t \psi^n_t)$ etc., we start by defining the unfolding operator
    \begin{align*}
        L \colon \qquad \mathcal{C} ~ &\rightarrow ~ \text{Range}(L)
        \\
       (\pmb{\phi_t}, \pmb{\psi_t}) ~ &\mapsto ~ \big(\partial_t \pmb{\phi_t}, \partial_x \pmb{\phi_t} ; \partial_t \pmb{\psi_t}, \pmb{\psi_t}, (\pmb{\phi_t}^1, \ldots, \pmb{\phi_t}^n)  \big).
    \end{align*}
    with
    \begin{align}\label{eq:def_ordered_phi}
    \pmb{\phi_t}^i = \left( \left. \phi_t^j \right|_{V^i}\right)_{j \in Z(V^i)} \; .
    \end{align} 
    As pointwise restriction and differentiation are continuous operations, the unfolding operator is continuous for the natural topology on $\mathcal{C}$ 
    and 
    \begin{align*}
        \text{Range}(L) &= \Big[C(\mathcal{Q}_{\overline{E}^1}) \times \ldots \times C(\mathcal{Q}_{\overline{E}^m}) \Big]^2 \times \Big[C(\mathcal{Q}_{V^1}) \times \ldots \times C(\mathcal{Q}_{V^n})\Big] \times  \Big[C^1(\mathcal{Q}_{V^1}) \times \ldots \times C^1(\mathcal{Q}_{V^n})\big] \\
        &\quad \times \Big[ [C^1(\mathcal{Q}_{V^1})]^{|Z(V^1)|} \times \ldots \times [C^1(\mathcal{Q}_{V^n})]^{|Z(V^n)|}\Big] \;.
    \end{align*}
    With these definitions, we can express the primal problem as 
    \begin{align*}
        \sup_{(\pmb{\phi_t}, \pmb{\psi_t}) \in \mathcal{C}} \Big\lbrace \mathcal{F}(\pmb{\phi_t}, \pmb{\psi_t}) + \mathcal{G}(L(\pmb{\phi_t}, \pmb{\psi_t})) \Big\rbrace
    \end{align*}
    with
    \begin{align*}
        \mathcal{F}(\pmb{\phi_t}, \pmb{\psi_t}) &= \sum_{j=1}^m \int_{\overline{E^j}} \big(\phi^j_1 ~ d \rho^j_1 - \phi^j_0 ~d \rho^j_0 \big)
    + \sum_{i=1}^n \big(\psi^i_1 \gamma^i_1 - \psi^i_0 \gamma^i_0 \big) 
        \intertext{and}
        \mathcal{G}(L(\pmb{\phi_t}, \pmb{\psi_t})) &= - \sum_{j=1}^m \iint_{\mathcal{Q}_{\overline{E}}} \iota_S( \partial_{t} \phi^j_t, \partial_x \phi^j_t) ~ dxdt - \sum_{i=1}^n \int_0^1 \iota_{S_{V^i}^\kappa}(\partial_t \psi^i_t, \psi^i_t, \phi_t\vert_{V^i}) ~ dt.
    \end{align*}
    Note that $\mathcal{F}(\pmb{\phi_t}, \pmb{\psi_t})$, defined on the product space $\mathcal{C}$, is a sum of functions that are linear continuous with respect to their corresponding variables. Moreover, each $\iota_S$ is convex, proper and l.s.c., so both $\mathcal{F}$ and $\mathcal{G}$ are concave, proper and upper semi-continuous functionals on $\mathcal{C}$.
    To apply the Fenchel-Rockafellar theorem, we need a pair $(\pmb{\phi_t}, \pmb{\psi_t})$ such that $\mathcal{G}(L(\pmb{\phi_t}, \pmb{\psi_t}))$ is continuous at $L(\pmb{\phi_t}, \pmb{\psi_t})$ and $\mathcal{F}(\pmb{\phi_t}, \pmb{\psi_t})$ is finite. 
    An example for such a pair is given by $\pmb{\phi_t} = - \pmb{1} t, \pmb{\psi_t} =  - t$, as
    \begin{align*}
        \mathcal{F}(- \pmb{1} t, -  t) &= \sum_{j=1}^m \int_{\overline{E^j}} - d \rho^j_1  + \sum_{i=1}^n - \gamma^i_1  < \infty \qquad \text{ and}
        \\
         \mathcal{G}(L(- \pmb{1} t, -  t)) &= - \sum_{j=1}^m \iint_{\mathcal{Q}_{\overline{E^j}}} \iota_S( -1, 0) ~ dxdt - \sum_{i=1}^n \int_0^1 \iota_{S_{V^i}^\kappa}\big(- 1, - t, -1t \big) ~ dt = 0,
    \end{align*}
    where $\mathcal{G}$ is continuous at $\mathcal{G}(L(- \pmb{1} t, - t))$ as $\mathcal{G}(L(- \pmb{1} t \pm \eps, -  t \pm \eps)) = 0$ for $\eps \ll 1$.
    This pair is clearly a solution of the Hamilton-Jacobi equations \eqref{eq:Lagrangedual}.

    Thus, the Fenchel-Rockafellar theorem \ref{th:Rockafellar} guarantees that
    \begin{align}
        \label{eq:Duality1}
        \sup_\mathcal{C} \mathcal{J}^\kappa (\pmb{\phi_t}, \pmb{\psi_t}) = \inf_{\pmb{\hat{\mu}_t} \in \mathcal{C}^*} \Big\lbrace - \mathcal{F}^*(-L^*\pmb{\hat{\mu}_t}) - \mathcal{G}^*(\pmb{\hat{\mu}_t}) \Big\rbrace,
    \end{align}
    where $-\mathcal{F}^* = (- \mathcal{F})^*, - \mathcal{G}^* = (- \mathcal{G})^*$ are the Fenchel-Legendre (convex) conjugates of the convex functional $\mathcal{-F, -G}$, respectively, and $L^* \colon (\text{Range}(L))^* \rightarrow \mathcal{C}^*$ is the adjoint operator of the unfolding operator $L$ and the target dual space identifies to
    \begin{align*}
         (\text{Range}(L))^* &= \Big[ \mathcal{M}(\mathcal{Q}_{\overline{E}^1}) \times \ldots \times  \mathcal{M}(\mathcal{Q}_{\overline{E}^m}) \Big]^2 \times \Big[ \mathcal{M}(\mathcal{Q}_{V^1}) \times \ldots \times  \mathcal{M}(\mathcal{Q}_{V^n}) \Big]^3 
    \end{align*}
    with elements denoted by
    \begin{align}
        \label{eq:segmentationnu}
        \pmb{\hat{\mu}_t} = (\pmb{\hat{\mu}_{E_t}}, \pmb{\hat{\mu}_{V_t}}) = (\pmb{\rho_t}, \pmb{F_t} ; \pmb{\gamma_t}, \pmb{f_t}, \pmb{\eta} ) \in  (\text{Range}(L))^*.
    \end{align}
    Similar to Theorem 2 in \cite{monsaingeon_new_2020}, one can show that
    \begin{align}
        \label{eq:Duality2}
        &- \mathcal{F}^*(-L^*\pmb{\hat{\mu}_t}) = \sup_{\pmb{\phi_t}, \pmb{\psi_t} \in \mathcal{C}} \Big\lbrace \mathcal{F}(\pmb{\phi_t}, \pmb{\psi_t}) - \langle \pmb{\hat{\mu}_t}, L(\pmb{\phi_t}, \pmb{\psi_t})\rangle_{(\text{Range}(L))^*, \text{Range}(L)} \Big\rbrace  
        \\
        &\qquad =\begin{cases} 0 & \text{if } \begin{cases} \sum_{j=1}^m \partial_t \rho^j_t + \partial_x F^j_t = 0 \text{ in } E^j &\text{ with } F^j_t \cdot  \nu_{i,j} = - \eta_t^{i,j} \text{ and } \rho^j\vert_{t=0,1} = \rho^j_{0,1},
        \\
        \sum_{i=1}^n \partial_t \gamma^i_t = f^i_t \text{ on } V^i &\text{ with } \gamma^i\vert_{t=0,1} = \gamma^i_{0,1}, \end{cases} \\ +\infty &\text{otherwise,} \end{cases}
    \end{align}
    where $\alpha(E^j)$ denotes the initial vertex of $E^j$, $\omega(E^j)$ its terminal vertex and the equations and initial-terminal/boundary conditions should be understood in the integral sense as in Definition \ref{def:weakformcontinuityequation}.
    Moreover, for a generic element $\pmb{\zeta_t} = ( \pmb{\alpha_t}, \pmb{\beta_t} ; \pmb{a_t}, \pmb{b_t}, \pmb{c_t}) \in \text{Range}(L)$, we compute
    \begin{align*}
        - \mathcal{G}^*(\pmb{\hat{\mu}_t}) & \sup_{\pmb{\zeta_t} \in \text{Range}(L)} = \Big\lbrace \langle \pmb{\hat{\mu}_t}, \pmb{\zeta_t} \rangle_{(\text{Range}(L))^*, \text{Range}(L)} + \mathcal{G}(\pmb{\zeta_t}) \Big\rbrace
        \\
        &= \sup_{(\pmb{\alpha_t}, \pmb{\beta_t})} \Bigg\lbrace \sum_{j=1}^m \iint_{\mathcal{Q}_{\overline{E^j}}}  \alpha^j_t ~ d \rho^j_t + \sum_{j=1}^m \iint_{\mathcal{Q}_{\overline{E^j}}} \beta^j_t \cdot d F^j_t
        - \sum_{j=1}^m \iint_{\mathcal{Q}_{\overline{E^j}}} \iota_S( \alpha^j_t, \beta^j_t) ~ dxdt \Bigg\rbrace
        \\
        &+ \sup_{(\pmb{a_t},\pmb{b_t},\pmb{c_t})} \Bigg\lbrace \sum_{i=1}^n \int_0^1 a^i_t \gamma^i_t ~d t + \sum_{i=1}^n \int_0^1 b^i_t f^i_t~dt + \sum_{i=1}^n \int_0^1 \pmb{c}^{i}_t \cdot \pmb{\eta}^{i}_t ~dt
        - \sum_{i=1}^n \int_0^1 \iota_{S_{V^i}^\kappa}(a^i_t, b^i_t, \pmb{c}^i_t) ~ dt \Bigg\rbrace ,
    \intertext{where $\pmb{c}_t^i$ are defined as $\pmb{c}_t^i$ in \eqref{eq:def_ordered_phi}, we used that $(\pmb{\alpha_t}, \pmb{\beta_t})$ and $(\pmb{a_t},\pmb{b_t},\pmb{c_t})$ are uncoupled. Moreover, as each summand is uncoupled to the other others, we obtain }
        &= \sum_{j=1}^m \sup_{(\alpha^j_t, \beta^j_t)} \Bigg\lbrace \iint_{\mathcal{Q}_{\overline{E^j}}}  \alpha^j_t ~ d \rho^j_t + \iint_{\mathcal{Q}_{\overline{E^j}}} \beta^j_t \cdot d F^j_t
        - \iint_{\mathcal{Q}_{\overline{E^j}}} \iota_S( \alpha^j_t, \beta^j_t) ~ dxdt \Bigg\rbrace
        \\
        &+ \sum_{i=1}^n \sup_{(a^i_t,b^i_t,\pmb{c}^i_t)} \Bigg\lbrace \int_0^1 a^i_t \gamma^i_t ~dt + \int_0^1 b^i_t f^i_t ~dt + \int_0^1 \pmb{c}^i_t\cdot \pmb{\eta}^i_t ~dt 
        - \int_0^1 \iota_{S_{V^i}^\kappa}(a^i_t, b^i_t, \pmb{c}^i_t) ~ dt \Bigg\rbrace.
    \end{align*}
    Applying \cite[Theorem 5]{rockafellar_integrals_1968} allows us to interpret the previous equation as a sum of two convex conjugates and switch the operations of  convex conjugation with the integration.
    Next, exploiting lemma \ref{lem:indicatorislagrangian} we obtain 
    \begin{align*}
       - \mathcal{G}^*(\pmb{\hat{\mu}_t}) &= \Bigg( \sum_{j=1}^m \iint_{ Q_{\overline{E^j}}} A\Big(\frac{d \hat{\mu}_{E^j}}{d \mathcal{L}_E^j} \Big) ~d \mathcal{L}_{E^j}
        + \sum_{j=1}^m \iint_{ Q_{\overline{E^j}}} A^\infty \Big(\frac{d \hat{\mu}_{E^j}}{d \hat{\mu}_{E^j}^S} \Big) ~d \hat{\mu}_{E^j}^S \Bigg)
        + \int_{[0,1]} \overline A(\pmb{\hat{\mu}_{V_t}}) ~dt .
    \intertext{Here, $\mathcal{L}_{E^j}$ denotes the space-time Lebesgue measure on $E^j$ for a given edge.
    The measures $\hat{\mu}_{E^j}^S, \hat{\mu}_{V^i}^S$ are any non-negative measures dominating the singular parts of $\vert\hat{\mu}_{E^j}\vert, \vert \hat{\mu}_{V^i}\vert$ and $A^\infty$ denotes the recession function of $A$.
    Since $A$ is 1-homogeneous, its recession function is $A^\infty = A$, see definition \ref{def:recessionfunction} and \cite[Corollar 8.5.2]{rockafellar_convex_1970}. Then, we can write}
        - \mathcal{G}^*(\pmb{\hat{\mu}_t}) &= \sum_{j=1}^m \iint_{ Q_{\overline{E^j}}} A\Big(\frac{d \hat{\mu}_{E^j}}{d \lambda_{E^j}} \Big) d \lambda_{E^j}
        +  \int_{[0,1]} \bar A( \pmb{\hat{\mu}_{V_t}}) dt
    \end{align*}
    for any dominating measure $\lambda_{E^j} \gg \vert \hat{\mu}_{E^j} \vert$.
    Now we introduce a different segmentation for $\pmb{\hat{\mu}_t}$ and replace the notation given in \eqref{eq:segmentationnu} by \begin{align*}
       \pmb{\hat{\mu}_t} = (\pmb{\mu_{E_t}}, \pmb{\mu_{V_t}}, \pmb{\eta_t}) \coloneqq (\pmb{\rho_t}, \pmb{F_t} ; \pmb{\gamma_t}, \pmb{f_t} ; \pmb{\eta_t} )
    \end{align*}
    in order to relate $-\mathcal{G}^*(\pmb{\hat{\mu}_t})$ to the action functional \eqref{eq:actionfunctional}.
    With this choice (and by \eqref{eq:indicatorlagrangianVextended}) we obtain
    \begin{align}
        \begin{split}
       - \mathcal{G}^*(\pmb{\hat{\mu}_t}) &= \sum_{j=1}^m \iint_{ Q_{\overline{E^j}}} A\Big(\frac{d \mu_{E^j}}{d \lambda_{E^j}} \Big) ~d \lambda_{E^j}
        +  \sum_{i=1}^n \int_{[0,1]} A( \mu_{V^i})~ dt
        + \sum_{i=1}^n \begin{cases} 0 & \text{if } f^i_t + \sum_{k \in Z(V^i) } \eta_t^{i,k}  = 0 \\  +\infty & \text{otherwise} \end{cases}
        \\
        &= \mathcal{A}(\pmb{\rho_0},\pmb{\rho_1},\pmb{\gamma_0},\pmb{\gamma_1})
        + \sum_{i=1}^n \begin{cases} 0 & \text{if } f^i_t +  \sum_{k \in Z(V^i)} \eta_t^{i,k}  = 0 \\  +\infty & \text{otherwise .} \end{cases}
        \label{eq:Duality3}
        \end{split}
    \end{align}
    Combining \eqref{eq:Duality1}, \eqref{eq:Duality2} and \eqref{eq:Duality3} yields the correct flux conditions $\sum_{j \in Z(V^i)} F^j_t \cdot \nu_{i,j} = f^i_t$ (since $F^j_t \cdot  \nu_{i,j}  = - \eta^{i,j}_t$ from $-\mathcal{F}^*(-L^*\pmb{\hat{\mu}_t}) < + \infty$ and $f^i_t + \sum_{j \in Z(V^i) }\eta_t^{i,j} = 0$ from $-\mathcal{G}^*(\pmb{\hat{\mu}_t}) < + \infty$), we end up with the claimed duality
    \begin{align*}
          \sup_\mathcal{C} \mathcal{J}^\kappa (\pmb{\phi_t}, \pmb{\psi_t}) = \inf_{\pmb{\mu_t} \in \mathcal{CE}(\pmb{\rho_0},\pmb{\rho_1},\pmb{\gamma_0},\pmb{\gamma_1})} \mathcal{A} \big(\pmb{\mu_t}) = \mathcal{W}_\kappa^2(\pmb{\rho_0},\pmb{\rho_1},\pmb{\gamma_0},\pmb{\gamma_1}).
    \end{align*}
    This completes the proof as $\mathcal{W}_\kappa^2$ is finite as shown in Lemma \ref{lem:distancefinite} and the Fenchel-Rockafellar theorem assures that the supremum in \eqref{eq:duality} is attained.
\end{proof}

As expected the functional $\mathcal{W}_\kappa^2$ forms a metric. 
Using Theorem \ref{thm:dualityexistence} and Prop. \ref{prop:propertiesofthesolutions} (iii), the proof of the following proposition is analogue to \cite[Prop. 3.10]{monsaingeon_new_2020}.


\begin{proposition}[{\cite[Prop. 3.10]{monsaingeon_new_2020}}]
    \label{prop:quantityisdistance}
    The quantity $\mathcal{W}_\kappa^2(\pmb{\rho_0},\pmb{\rho_1},\pmb{\gamma_0},\pmb{\gamma_1})$, defined in \ref{def:quantityW}, is a distance on $\mathcal{P}(\mathcal{G})$, i.e. the set of probability measures on the network $\mathcal{G}$.
\end{proposition}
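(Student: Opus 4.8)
The plan is to show that $\mathcal{W}_\kappa := \sqrt{\mathcal{W}_\kappa^2}$ satisfies the axioms of a distance on $\mathcal{P}(\mathcal{G})$. Throughout I abbreviate a network state by $P=(\pmb{\rho},\pmb{\gamma})$ and write $\mathcal{W}_\kappa(P_a,P_b)$ for the square root of the corresponding value of \eqref{eq:distancefunctional}. Non-negativity is immediate from the non-negativity of the action density $A$, and finiteness is exactly Lemma \ref{lem:distancefinite}, so $\mathcal{W}_\kappa$ takes values in $[0,\infty)$.

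For the identity of indiscernibles, if $P_0=P_1$ the stationary quadruple $\pmb{\rho_t}\equiv\pmb{\rho_0}$, $\pmb{\gamma_t}\equiv\pmb{\gamma_0}$, $\pmb{F_t}\equiv\pmb{0}$, $\pmb{f_t}\equiv\pmb{0}$ lies in $\mathcal{CE}$ and has zero action, so $\mathcal{W}_\kappa(P_0,P_0)=0$. Conversely, Theorem \ref{thm:dualityexistence} guarantees that the infimum in \eqref{eq:distancefunctional} is attained; hence $\mathcal{W}_\kappa(P_0,P_1)=0$ yields an admissible minimiser with $\mathcal{A}=0$, and inserting it into the bounded-Lipschitz estimate \eqref{eq:Abschaetzungboundedlipschitzdistance} of Proposition \ref{prop:propertiesofthesolutions}(iii) forces $d_{\text{BL},\overline{E^j}}(\rho^j_s,\rho^j_t)=0$ and $d_{\text{BL},V^i}(\gamma^i_s,\gamma^i_t)=0$ for all $s,t$, so the curve is constant and $P_0=P_1$.

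Symmetry follows by time reversal: if $(\pmb{\rho_t},\pmb{F_t},\pmb{\gamma_t},\pmb{f_t})\in\mathcal{CE}(\pmb{\rho_0},\pmb{\rho_1},\pmb{\gamma_0},\pmb{\gamma_1})$, then the reflected quadruple $(\pmb{\rho_{1-t}},-\pmb{F_{1-t}},\pmb{\gamma_{1-t}},-\pmb{f_{1-t}})$ solves \eqref{eq:ContuinityEquation} with the initial and terminal data interchanged, and because $A$ sees the fluxes only through $|F^j_t|^2$ and $|f^i_t|^2$ its action coincides; passing to infima gives $\mathcal{W}_\kappa(P_0,P_1)=\mathcal{W}_\kappa(P_1,P_0)$.

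The triangle inequality is the only step requiring real work. Here I use that $A$ is jointly $1$-homogeneous, so that the action scales quadratically under time-reparametrisation: if a curve on $[0,1]$ with action $A_\ast$ is reparametrised onto an interval of length $\tau$ while rescaling $\pmb{F_t}\mapsto\tau^{-1}\pmb{F_t}$ and $\pmb{f_t}\mapsto\tau^{-1}\pmb{f_t}$ (which keeps \eqref{eq:ContuinityEquation} intact), its action becomes $A_\ast/\tau$. Given three states $P_0,P_1,P_2$ and near-optimal connecting curves $\mu^1\in\mathcal{CE}(P_0,P_1)$, $\mu^2\in\mathcal{CE}(P_1,P_2)$ with actions $A_1,A_2$, I reparametrise $\mu^1$ onto $[0,\theta]$ and $\mu^2$ onto $[\theta,1]$ and glue them at the common state $P_1$ at time $\theta$; narrow continuity from Proposition \ref{prop:propertiesofthesolutions}(iii) ensures the glued quadruple is still an admissible weak solution in $\mathcal{CE}(P_0,P_2)$. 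Writing $\ell_k=\sqrt{2A_k}$, its action equals
\begin{align*}
\frac{\ell_1^2}{2\theta}+\frac{\ell_2^2}{2(1-\theta)},
\end{align*}
which is minimised over $\theta\in(0,1)$ at $\theta=\ell_1/(\ell_1+\ell_2)$ with value $\tfrac12(\ell_1+\ell_2)^2$. Hence $\mathcal{W}_\kappa^2(P_0,P_2)\le\tfrac12(\ell_1+\ell_2)^2$, and letting $\mu^1,\mu^2$ run through minimising sequences (so $\ell_k\to\sqrt{2}\,\mathcal{W}_\kappa(P_{k-1},P_k)$) yields $\mathcal{W}_\kappa(P_0,P_2)\le\mathcal{W}_\kappa(P_0,P_1)+\mathcal{W}_\kappa(P_1,P_2)$. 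The main obstacle is the bookkeeping that the reparametrisation and gluing preserve membership in $\mathcal{CE}$ on the whole network — in particular that the flux-coupling $f^i_t=\sum_{j\in Z(V^i)}F^j_t(V_i)\cdot\nu_{i,j}$ at every vertex survives the rescaling — which is exactly where the construction of \cite{monsaingeon_new_2020} carries over.
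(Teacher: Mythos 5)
Your proposal is correct and follows essentially the same route as the paper, which simply defers to \cite[Prop.~3.10]{monsaingeon_new_2020} using exactly the ingredients you invoke: attainment of the infimum from Theorem~\ref{thm:dualityexistence} together with the bounded-Lipschitz estimate of Proposition~\ref{prop:propertiesofthesolutions}(iii) for the identity of indiscernibles, time reversal for symmetry, and the standard concatenation/time-reparametrisation argument (with the optimal splitting $\theta=\ell_1/(\ell_1+\ell_2)$) for the triangle inequality. Your scaling computation $A_\ast\mapsto A_\ast/\tau$ and the resulting bound $\tfrac12(\ell_1+\ell_2)^2$ are accurate, so no correction is needed.
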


\section{Limiting cases and relationship to other metrics}
\label{sec:Network_relationships}

In this section we want to explore the influence of $\kappa$ on the transport problem putting special emphasis on the limiting case $\kappa \rightarrow \infty$.
This case translates to fact that the costs for transporting mass over the vertices goes to infinity.

The first step is to analyse the relationship between our metric $\mathcal{W}^2_\kappa$ and some other metrics on the graph. There are several special metrics we can consider, e.g. separate metrics on edges and nodes. Above we have already introduced the bounded Lipschitz distances on edges and nodes. The simplest case related to our metric is the Fisher-Rao metric  $\mathcal{FR}_{\kappa}$ on nodes, i.e.,
$$     \mathcal{FR}_{\kappa}^2(\pmb{\gamma_0, \gamma_1}) \coloneqq \min \mathcal{A}^\mathcal{FR} = \min \Bigg \lbrace \sum_{i=1}^n \int_0^1 \kappa^2 \frac{\vert f^i_t \vert^2}{2 \gamma^i_t} ~dt \quad \text{s.t. } \quad \partial_t \gamma^i_t  = f^i_t \text{ in } V^i \Bigg \rbrace . 
$$
Existence of minimisers for the Fisher-Rao distance is given in \cite[Thm 3.1]{chizat_interpolating_2010} which allows us to write `min' instead of `inf'.

If the masses in the edges are compatible, i.e. $ \Vert \rho_0^j \Vert = \Vert \rho_1^j \Vert $, for $j=1,\ldots, m$, we can also consider the 
the classical Wasserstein distance on each edge $\mathcal{W}^2_{E^j}$ as the problem completely decouples to 
\begin{align}\label{eq:Wasserstein_each_edge}
    \sum_{j=1}^m\mathcal{W}^2_{E^j}(\rho_0^j,\rho_1^j) &=
    \sum_{j=1}^m \min\Bigg \lbrace\iint_{\overline{E^j} \times [0,1] } \frac{\vert F^j_t \vert^2}{2 \rho^j_t} ~dxdt \quad \text{s.t. } \quad
    \begin{matrix}
        \partial_t \rho^j_t + \partial_x F^j_t = 0  & \text{ in } E^j,  \\
        F^j_t \cdot \nu_{i,j} = 0 & \text{ in } \partial \overline{E^j}
    \end{matrix}
    \Bigg \rbrace .
\end{align}

In the case of incompatible masses on individual edges, but compatible overall edge mass, i.e. 
$$ \sum_j \Vert \rho_0^j \Vert =  \sum_j \Vert \rho_1^j \Vert $$
we can introduce a Wasserstein metric on the edges only which, at the nodes, are connected via Kirchhoff's law, i.e.
$$ \mathcal{W}_{\mathcal{E}}^2 (\pmb{\rho_0, \rho_1}) = \min \Bigg \lbrace \sum_{j=1}^m \iint_{\overline{E^j} \times [0,1] } \frac{\vert F^j_t \vert^2}{2 \rho^j_t} ~dxdt \quad \text{s.t. } \quad
    \begin{matrix}
        \partial_t \rho^j_t + \partial_x F^j_t = 0  & \text{ in } E^j,  \\
     \sum_{j \in Z(V^i)} F^j_t \cdot \nu_{i,j} = 0 & \text{ in } \partial \overline{E^j} ,
    \end{matrix}
    \Bigg \rbrace $$

    The existence of a minimizer can be shown analogous to the original Wasserstein metric, since Kirchhoff's law directly allows a weak formulation of the constraint equation on the whole network of edges.
Note that for comparison the original distance with $\kappa =0$,
\begin{align*}
    \mathcal{W}_{0}^2(\varsigma_0,\varsigma_1) \coloneqq& \min \Bigg\lbrace \iint_{\mathcal{G} \times [0,1] } \frac{\vert H \vert^2}{2 \varsigma} ~dxdt \quad \text{s.t. } \quad 
    \begin{matrix} 
        \partial_t \varsigma + \partial_x H = h & \text{in } \mathcal{Q}_\mathcal{G},  \\
        \sum_{j \in Z(V^i)} F^j_t \cdot \nu_{i,j} = f^i_t & \text{in } \mathcal{Q}_\mathcal{V}.
    \end{matrix} \Bigg\rbrace ,
\end{align*}
with $\varsigma_0,\,\varsigma_1$ defined in \eqref{eq:total_mass_network} and $H, h$ as in Proposition \ref{prop:globalequation}. 

\begin{proposition}[{Sandwich theorem adapted to \cite[Prop. 5.1]{monsaingeon_new_2020}}]
    \label{prop:Sandwichtheorem}
    For any admissible initial and terminal network concentrations $(\pmb{\rho_0},\pmb{\rho_1},\pmb{\gamma_0},\pmb{\gamma_1})$, it holds that
    \begin{alignat*}{3}
        &(i) \hspace{33ex} \mathcal{FR}_{\kappa}^2(\pmb{\gamma_0, \gamma_1}) &&\leq \mathcal{W}_\kappa^2(\pmb{\rho_0},\pmb{\rho_1},\pmb{\gamma_0},\pmb{\gamma_1}) \; ,
        \\ 
        &(ii) \qquad \sum_{j=1}^m d_{\text{BL}, \overline{E^j}} (\rho^j_s, \rho^j_t ) 
            + \sum_{i=1}^n d_{\text{BL}, V^i}(\gamma^i_s, \gamma^i_t) &&\leq C_\kappa {\mathcal{W}_\kappa}(\pmb{\rho_0},\pmb{\rho_1},\pmb{\gamma_0},\pmb{\gamma_1})  \qquad \text{and}
       \intertext{
     if the node masses are compatible, i.e. 
       $\pmb{\gamma_0} = \pmb{\gamma_1}$, }
         &(iii) \hspace{27ex} \mathcal{W}_\kappa^2(\pmb{\rho_0},\pmb{\rho_1},\pmb{\gamma_0},\pmb{\gamma_1})  &&\leq \mathcal{W}_{\mathcal{E}}^2(\pmb{\rho_0, \rho_1}) \; .
    \end{alignat*}  
\end{proposition}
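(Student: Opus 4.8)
The plan is to treat the three inequalities independently, each time by a direct comparison of the admissible sets of the variational problems involved; no machinery beyond the action functional \eqref{eq:actionfunctional}, the continuity equation \eqref{eq:ContuinityEquation} and Proposition \ref{prop:propertiesofthesolutions} should be required.

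For (i) I would observe that the vertex part of \emph{any} admissible quadruple is already a Fisher--Rao competitor. Indeed, for $(\pmb{\rho_t},\pmb{F_t};\pmb{\gamma_t},\pmb{f_t})\in\mathcal{CE}(\pmb{\rho_0},\pmb{\rho_1},\pmb{\gamma_0},\pmb{\gamma_1})$ the vertex relation $\partial_t\gamma^i_t=f^i_t$ in \eqref{eq:ContuinityEquation} together with the prescribed endpoints makes $(\pmb{\gamma_t},\pmb{f_t})$ admissible for the problem defining $\mathcal{FR}_\kappa^2$. Since the edge contribution to the action is nonnegative, discarding it yields
\begin{align*}
    \mathcal{A}(\pmb{\rho_t},\pmb{F_t};\pmb{\gamma_t},\pmb{f_t}) \geq \kappa^2\sum_{i=1}^n\int_0^1 A(\gamma^i_t,f^i_t)\,dt \geq \mathcal{FR}_\kappa^2(\pmb{\gamma_0},\pmb{\gamma_1}),
\end{align*}
and passing to the infimum over $\mathcal{CE}$ gives (i). For (ii) I would apply Proposition \ref{prop:propertiesofthesolutions}(iii) to a minimiser, whose existence is provided by Theorem \ref{thm:dualityexistence}; for that minimiser $\mathcal{A}=\mathcal{W}_\kappa^2$, so \eqref{eq:Abschaetzungboundedlipschitzdistance} holds with $\sqrt{\mathcal{A}}=\mathcal{W}_\kappa$, and bounding the remaining factor by $|t-s|^{1/2}\leq 1$ (as $s,t\in[0,1]$) gives the stated estimate.

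The only inequality that calls for a construction is (iii). Here I would take a minimiser $(\pmb{\rho_t},\pmb{F_t})$ of $\mathcal{W}_{\mathcal{E}}^2(\pmb{\rho_0},\pmb{\rho_1})$, which by definition satisfies Kirchhoff's law $\sum_{j\in Z(V^i)}F^j_t\cdot\nu_{i,j}=0$ at every vertex, and complete it to a competitor for $\mathcal{W}_\kappa^2$ by freezing the node masses, $\gamma^i_t\equiv\gamma^i_0$, and setting $f^i_t\equiv 0$. The compatibility hypothesis $\pmb{\gamma_0}=\pmb{\gamma_1}$ is exactly what makes the frozen masses meet both endpoint constraints. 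One then checks that $\partial_t\gamma^i_t=f^i_t$ collapses to $0=0$, that the flux coupling $f^i_t=\sum_{j\in Z(V^i)}F^j_t\cdot\nu_{i,j}$ holds by Kirchhoff, and hence that $(\pmb{\rho_t},\pmb{F_t};\pmb{\gamma_t},\pmb{f_t})\in\mathcal{CE}(\pmb{\rho_0},\pmb{\rho_1},\pmb{\gamma_0},\pmb{\gamma_1})$. Because the Fisher--Rao term vanishes for $f^i_t\equiv 0$, the action of this competitor equals the pure edge energy, so
\begin{align*}
    \mathcal{W}_\kappa^2(\pmb{\rho_0},\pmb{\rho_1},\pmb{\gamma_0},\pmb{\gamma_1}) \leq \sum_{j=1}^m\iint_{\overline{E^j}\times[0,1]}\frac{|F^j_t|^2}{2\rho^j_t}\,dxdt = \mathcal{W}_{\mathcal{E}}^2(\pmb{\rho_0},\pmb{\rho_1}).
\end{align*}

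I expect the main obstacle to lie in the admissibility verification for (iii): one must confirm, at the level of the weak formulation in Definition \ref{def:weakformcontinuityequation}, that the Kirchhoff-balanced edge flux, paired with $f^i_t\equiv 0$, really defines a weak solution, i.e. that the boundary terms generated by the individual edge equations cancel against the vanishing vertex coupling. This is precisely the consistency between the weak formulation underlying $\mathcal{W}_{\mathcal{E}}^2$ and that of $\mathcal{CE}$, and it is the one place where the argument is more than bookkeeping. Inequalities (i) and (ii) are, by contrast, immediate consequences of the structure of the action and of Proposition \ref{prop:propertiesofthesolutions}.
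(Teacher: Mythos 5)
Your proposal is correct and follows essentially the same route as the paper's proof: (i) by observing that the vertex part of an admissible quadruple is a Fisher--Rao competitor, (ii) by applying Proposition \ref{prop:propertiesofthesolutions}(iii) to a minimiser whose existence comes from Theorem \ref{thm:dualityexistence}, and (iii) by freezing the node masses along a $\mathcal{W}_{\mathcal{E}}^2$-geodesic with $f^i_t\equiv 0$, exactly as the paper does. The only (inessential) difference is that in (i) you bound the action of an \emph{arbitrary} competitor from below and then pass to the infimum, whereas the paper evaluates along a geodesic; your variant even avoids invoking existence of minimisers at that point, and your explicit admissibility check in (iii) -- that the Kirchhoff-balanced flux with vanishing $f^i_t$ satisfies Definition \ref{def:weakformcontinuityequation} -- is precisely the step the paper asserts without detail.
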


\begin{proof}
    \begin{enumerate}
    \item[(i)] As proven in Theorem \ref{thm:dualityexistence}, we know that $\mathcal{W}^2_\kappa$ geodesics exist and we pick any geodesic $\pmb{\hat{\mu}_t} = (\pmb{\hat{\mu}_E}, \pmb{\hat{\mu}_V}) = (\pmb{\rho_t}, \pmb{F_t} ; \pmb{\gamma_t}, \pmb{f_t}, \pmb{\eta} )$ connecting $\pmb{\rho_0}$ and $\pmb{\rho_1}$ and by definition $\mathcal{A}(\pmb{\hat{\mu}_t}) = \mathcal{W}^2_\kappa$ holds.
    For $i\in \lbrace 1, \ldots , n\rbrace$ each component $\mu_{V^i}$ of the vector $\pmb{\hat{\mu}_V}$ solves the continuity equations $\partial_t \gamma^i_t = f^i_t$, thus connecting $\gamma^i_0$ and $\gamma^i_1$ on $V^i$, and is therefore admissible for the action $\mathcal{A}^{\mathcal{FR}}$ of the Fisher-Rao metric. Thus
    \begin{align*}
         \mathcal{FR}_{\kappa}^2(\pmb{\gamma_0, \gamma_1}) = \min_{\pmb{\hat{\mu}_V}'} \mathcal{A}^{\mathcal{FR}}(\pmb{\hat{\mu}_V}') \leq \mathcal{A}^{\mathcal{FR}}(\pmb{\hat{\mu}_V}) \leq \mathcal{A}(\pmb{\hat{\mu}_t}) = \mathcal{W}^2_\kappa(\pmb{\rho_0},\pmb{\rho_1},\pmb{\gamma_0},\pmb{\gamma_1}) \; ,
    \end{align*}
    which shows the assertion.
    
    \item[(ii)] Applying Theorem \ref{thm:dualityexistence}, we choose a geodesic $\pmb{\mu_t} \in \mathcal{CE}(\pmb{\rho_0},\pmb{\rho_1},\pmb{\gamma_0},\pmb{\gamma_1})$. By Proposition \ref{prop:propertiesofthesolutions} (iii) we obtain
    \begin{align*}
         \sum_{j=1}^m d_{\text{BL}, \overline{E^j}} (\rho^j_s, \rho^j_t ) 
            + \sum_{i=1}^n d_{\text{BL}, V^i}(\gamma^i_s, \gamma^i_t) \leq C_\kappa \sqrt{\mathcal{A}(\pmb{\mu_t})} \vert t-s\vert^{\frac{1}{2}} = C_\kappa \mathcal{W}_\kappa^2(\pmb{\rho_0},\pmb{\rho_1},\pmb{\gamma_0},\pmb{\gamma_1}) \; .
    \end{align*}

    \item[(iii)]
    Pick an interior Wasserstein geodesic $(\pmb{\rho_t}, \pmb{F_t})$ and the trivial boundary geodesic $(\pmb{\gamma_0},\pmb{0})$. Together these geodesics form a solution of the generalised continuity equation in the sense of Definition \ref{def:weakformcontinuityequation}. 
    Consequently, $\pmb{\mu_t} \coloneqq (\pmb{\rho_t}, \pmb{F_t},\pmb{\gamma_t},\pmb{0})$ forms an admissible candidate in \ref{def:quantityW} and we obtain
    \begin{align*}
        \mathcal{W}_\kappa^2(\pmb{\rho_0},\pmb{\rho_1},\pmb{\gamma_0},\pmb{\gamma_1}) \leq \mathcal{A}(\pmb{\mu_t}) = \mathcal{W}_{\mathcal{E}}^2(\pmb{\rho_0, \rho_1}) \; .
    \end{align*}
    \end{enumerate}
\end{proof}
We can also show that, unless the initial and final mass on the vertices is zero, every geodesic is such that for every edge with unequal initial and final mass, the mass of at least one of the attached vertices must change.
\begin{proposition}
     For any admissible initial and terminal network concentrations $(\pmb{\rho_0},\pmb{\rho_1},\pmb{\gamma_0},\pmb{\gamma_1})$ such that $\pmb{\gamma_0}=\pmb{\gamma_1}\neq 0$. Then for every edge $j$ with $\rho_0^{j} \neq \rho_1^{j}$ either $f^{\bar\alpha(j)}$ or $f^{\bar \omega(j)}$ must be nonzero. 
     
\end{proposition}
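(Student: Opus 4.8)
The plan is to argue by contraposition. By Theorem~\ref{thm:dualityexistence} a minimiser $\pmb{\mu_t}=(\pmb{\rho_t},\pmb{F_t},\pmb{\gamma_t},\pmb{f_t})\in\mathcal{CE}(\pmb{\rho_0},\pmb{\rho_1},\pmb{\gamma_0},\pmb{\gamma_1})$ exists; I fix an arbitrary such geodesic and an edge $j$, and suppose that both attached vertices keep constant mass, i.e.\ $f^{\bar\alpha(j)}\equiv 0$ and $f^{\bar\omega(j)}\equiv 0$ on $[0,1]$. The goal is to deduce that the edge mass $\Vert\rho^j_t\Vert$ is constant, hence $\Vert\rho^j_0\Vert=\Vert\rho^j_1\Vert$; reading $\rho^j_0\neq\rho^j_1$ as the inequality of edge masses (as in the sentence preceding the proposition, \emph{``unequal initial and final mass''}), this is exactly the contrapositive of the claim. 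Since $\pmb{\gamma_0}=\pmb{\gamma_1}\neq 0$, the vertex equation $\partial_t\gamma^i_t=f^i_t$ together with the two vanishing fluxes forces $\gamma^{\bar\alpha(j)}_t$ and $\gamma^{\bar\omega(j)}_t$ to be constant and strictly positive on all of $[0,1]$; this strict positivity is precisely where the hypothesis $\pmb{\gamma_0}\neq 0$ is used.

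First I would localise the mass balance to the single edge $E^j$. Disintegrating in time as in Proposition~\ref{prop:propertiesofthesolutions} and testing the weak edge equation \eqref{eq:weakformualtionedges} with a purely time-dependent function gives
\begin{align*}
\frac{d}{dt}\Vert\rho^j_t\Vert=-\big(F^j_t(V^{\bar\alpha(j)})\cdot\nu_{\bar\alpha(j),j}+F^j_t(V^{\bar\omega(j)})\cdot\nu_{\bar\omega(j),j}\big),
\end{align*}
so the edge mass can only change through the two boundary fluxes of $E^j$, which are precisely the contributions of $E^j$ to $f^{\bar\alpha(j)}$ and $f^{\bar\omega(j)}$. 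If $\bar\alpha(j)$ and $\bar\omega(j)$ are leaves (degree one), then $f^{\bar\alpha(j)}$ and $f^{\bar\omega(j)}$ equal those single boundary fluxes, so both boundary fluxes vanish and $\Vert\rho^j_t\Vert$ is constant, giving $\Vert\rho^j_0\Vert=\Vert\rho^j_1\Vert$ directly.

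For a general vertex the total flux $f^i_t=\sum_{k\in Z(V^i)}F^k_t(V^i)\cdot\nu_{i,k}$ records only the \emph{net} exchange, and here lies the main obstacle: a priori the outflux of $E^j$ could be exactly cancelled by the fluxes of the neighbouring edges, a ``pass-through'' that changes $\Vert\rho^j_t\Vert$ while leaving $\gamma^i_t$ constant at zero vertex cost. To exclude this I would invoke the first-order optimality conditions of the geodesic (Appendix~\ref{sec:derivationHJ}): on $\{\gamma^i>0\}$ the reaction obeys $f^i_t=\tfrac{\gamma^i_t}{\kappa^2}\big(\psi^i_t-\tfrac{1}{|Z(V^i)|}\sum_{k\in Z(V^i)}\phi^k_t(V^i)\big)$ while the edge velocity is $\partial_x\phi^j_t$. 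Vanishing of $f^{\bar\alpha(j)}$ with $\gamma^{\bar\alpha(j)}>0$ pins the vertex potential to the average of the incident edge potentials and, through the coupled Hamilton--Jacobi system \eqref{eq:Lagrangedual}, should force each \emph{individual} boundary flux — not merely their sum — to vanish, so that no optimal rerouting through these vertices can occur. Establishing this rigidity of the optimal fluxes at an interior vertex (equivalently, that a geodesic never performs a cost-free pass-through) is the crux of the argument, and the step I expect to require the full strength of the duality of Theorem~\ref{thm:dualityexistence} together with the positivity $\pmb{\gamma_0}=\pmb{\gamma_1}\neq 0$.
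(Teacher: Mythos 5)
Your setup is sound: the contraposition, the localisation of the mass balance to the boundary fluxes of $E^j$, the observation that leaf vertices are immediate, and the identification of the real obstacle --- that $f^i_t$ only records the \emph{net} flux, so a cost-free ``pass-through'' at an interior vertex must be excluded. Your reading of $\rho_0^j \neq \rho_1^j$ as unequal \emph{masses} is also the right one (it is what the paper's own contradiction actually uses, since no-flux boundary conditions conserve mass but do not freeze the measure). However, at the decisive point the proposal stops being a proof: you write that the pinning of the vertex potential ``should force each individual boundary flux --- not merely their sum --- to vanish'' and that you ``expect'' this to follow from duality, but you never derive it. That rigidity statement is the entire content of the proposition; without it the argument is a plan with a hole exactly where the work is.

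The paper closes this gap with a short computation that you could have carried out with the tools you already named. Along a geodesic the dual relations hold with equality, $F^j_t = \rho^j_t\,\partial_x\phi^j_t$ and $\kappa^2 f^{i}_t = \gamma^i_t\bigl(\psi^i_t - \tfrac{1}{|Z(V_i)|}\sum_{j\in Z(V_i)}\phi^j_t\bigr)$, and the Hamilton--Jacobi relations \eqref{eq:Lagrangedual} are saturated (Appendix \ref{sec:derivationHJ}). For $i^* \in \{\bar\alpha(j^*),\bar\omega(j^*)\}$, the assumptions $f^{i^*}_t\equiv 0$ and $\gamma^{i^*}_t \equiv C>0$ pin the potential, $\psi^{i^*}_t = \tfrac{1}{|Z(V_{i^*})|}\sum_{j\in Z(V_{i^*})}\phi^j_t$; substituting this into the vertex Hamilton--Jacobi equation gives $\partial_t\psi^{i^*}_t = 0$, and differentiating the pinning identity in time while inserting the edge equations $\partial_t\phi^j_t = -\tfrac12|\partial_x\phi^j_t|^2$ yields
\begin{align*}
0 = \partial_t \sum_{j\in Z(V_{i^*})}\phi^j_t = -\frac12 \sum_{j\in Z(V_{i^*})}\bigl|\partial_x\phi^j_t\bigr|^2 \quad \text{at } V_{i^*},
\end{align*}
a sum of nonnegative terms. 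Hence $\partial_x\phi^j_t(V_{i^*}) = 0$ for \emph{every} incident edge, so each individual boundary flux $F^j_t(V_{i^*}) = \rho^j_t\,\partial_x\phi^j_t(V_{i^*})$ vanishes --- precisely the rigidity you postulated. In particular $F^{j^*}_t\cdot\nu_{i^*,j^*} = 0$ at both endpoints, so the mass on $E^{j^*}$ is conserved, contradicting the hypothesis. This is the one genuinely missing step in your proposal; everything else matches the paper's argument.
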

\begin{proof}
    We argue by contradiction and use that the dual potentials $(\pmb{\phi_t},\pmb{\psi_t})$ are (smooth) subsolutions of the Hamilton-Jacobi equations \eqref{eq:Lagrangedual} and satisfy the relations
    \begin{align*}
            \frac{F^j_t}{\rho^j_t} = \partial_x \phi^j_t\quad \text{ and } \quad \kappa^2 \frac{f^i_t}{\gamma^i_t} = \psi^i_t - \frac{1}{|Z(V_i)|}\sum_{j\in Z(V_i)}\phi^j_t,
    \end{align*}
    for $i=1,\ldots, n$. Fix and edge $j^* \in \{1, \ldots, m\}$ with $\rho_0^{j} \neq \rho_1^{j}$ and assume $f_t^{\bar\alpha(j^*)} = 0$ as well as $f_t^{\bar\omega(j^*)} = 0$ for a.e. $t \in (0,1)$. This implies, for  $i^* \in \{\bar\alpha(j^*), \bar\omega(j^*)\}$, that $\gamma_t^{i^*}=C$ with $C>0$ due to $\gamma_0^{i^*}=\gamma_1^{i^*}\neq 0$. Together with
    \begin{align}\label{eq:calc_vertex_geodesic}
        0 = \kappa^2 f_t^{i^*} = \gamma_t^{i^*}\left(\psi^{i^*}_t  - \frac{1}{|Z(V_{i^*})|}\sum_{j\in Z(V_{i^*})}\phi^j_t\right),
    \end{align}
    this yields $\psi^{i^*}_t  - \frac{1}{|Z(V_{i^*})|}\sum_{j\in Z(V_{i^*})}\phi^j_t = 0$. Inserting this into \eqref{eq:Lagrangedual} yields $\partial_t \psi_t^{i^*} = 0$ and, differentiating \eqref{eq:calc_vertex_geodesic} in time, we obtain
    \begin{align*}
        0 = \partial_t \sum_{j\in Z(V_{i^*})}\phi^j_t = \frac{1}{2}\sum_{j\in Z(V_{i^*})}|\partial_x\phi^j_t|^2.
    \end{align*}
    Therefore the fluxes $F_t^{j}$ vanish for all edges $j$ connected to either one of the two vertices $\bar\alpha(j^*)$ and $\bar\omega(j^*$. In particular $F_t^{j^*}\cdot \nu_{i^*,j^*}=0$ for $i^* \in \{\bar\alpha(j^*), \bar\omega(j^*)\}$ meaning that on $j^*$, the continuity equation has no-flux boundary conditions. This is a contradiction to $\rho_0^{j^*} \neq \rho_1^{j^*}$. 
\end{proof}
\begin{proposition}[{adapted to \cite[Prop. 6.1]{monsaingeon_new_2020}}]
    For fixed $(\pmb{\rho_0},\pmb{\rho_1},\pmb{\gamma_0},\pmb{\gamma_1})$ the map $\kappa \mapsto \mathcal{W}^2_\kappa(\pmb{\rho_0},\pmb{\rho_1},\pmb{\gamma_0},\pmb{\gamma_1})$ is non-decreasing.
\end{proposition}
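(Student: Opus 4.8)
The plan is to exploit the fact that the constraint set $\mathcal{CE}(\pmb{\rho_0},\pmb{\rho_1},\pmb{\gamma_0},\pmb{\gamma_1})$ is completely independent of $\kappa$, so that the entire $\kappa$-dependence of $\mathcal{W}_\kappa^2$ is carried by the action functional through the single prefactor $\kappa^2$ multiplying the Fisher-Rao (vertex) term in \eqref{eq:actionfunctional}. Monotonicity then reduces to the elementary observation that this vertex term is non-negative, so that the functional can only grow as $\kappa$ increases while the set of competitors stays the same.

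More concretely, I would fix $0 < \kappa_1 \leq \kappa_2$ and write $\mathcal{A}_\kappa$ for the action functional \eqref{eq:actionfunctional} to emphasise its $\kappa$-dependence. First I would record that the vertex contribution $\sum_{i=1}^n \int_0^1 A(\gamma^i_t, f^i_t)\,dt$ is non-negative, which is immediate since the action density $A(a,b)$ takes values in $[0,+\infty]$ by definition. Then, for any admissible quadruple $\pmb{\mu_t} \in \mathcal{CE}(\pmb{\rho_0},\pmb{\rho_1},\pmb{\gamma_0},\pmb{\gamma_1})$, the edge contribution is identical for both parameters, so
\begin{align*}
\mathcal{A}_{\kappa_1}(\pmb{\mu_t}) - \mathcal{A}_{\kappa_2}(\pmb{\mu_t}) = (\kappa_1^2 - \kappa_2^2)\sum_{i=1}^n \int_0^1 A(\gamma^i_t, f^i_t)\,dt \leq 0,
\end{align*}
using $\kappa_1^2 \leq \kappa_2^2$ together with the non-negativity just noted. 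Hence $\mathcal{A}_{\kappa_1} \leq \mathcal{A}_{\kappa_2}$ pointwise on the common admissible set.

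Finally, I would take the infimum over $\pmb{\mu_t} \in \mathcal{CE}(\pmb{\rho_0},\pmb{\rho_1},\pmb{\gamma_0},\pmb{\gamma_1})$. Since one functional dominates the other pointwise on the same set, the infima inherit the inequality, giving
\begin{align*}
\mathcal{W}_{\kappa_1}^2(\pmb{\rho_0},\pmb{\rho_1},\pmb{\gamma_0},\pmb{\gamma_1}) = \inf_{\pmb{\mu_t}} \mathcal{A}_{\kappa_1}(\pmb{\mu_t}) \leq \inf_{\pmb{\mu_t}} \mathcal{A}_{\kappa_2}(\pmb{\mu_t}) = \mathcal{W}_{\kappa_2}^2(\pmb{\rho_0},\pmb{\rho_1},\pmb{\gamma_0},\pmb{\gamma_1}),
\end{align*}
which is exactly the asserted monotonicity. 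There is no genuine obstacle here: the only point that deserves an explicit remark is the $\kappa$-independence of $\mathcal{CE}$, which holds because the continuity equation \eqref{eq:ContuinityEquation} and the initial-terminal conditions defining admissibility do not involve $\kappa$ at all. One could alternatively argue through the dual formulation of Theorem \ref{thm:dualityexistence} by noting that the constraint set $S_{V^i}^\kappa$ shrinks as $\kappa$ grows, but the primal comparison above is the most direct route.
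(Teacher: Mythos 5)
Your proof is correct, but it takes a genuinely different route from the paper. You argue on the primal side: the admissible set $\mathcal{CE}(\pmb{\rho_0},\pmb{\rho_1},\pmb{\gamma_0},\pmb{\gamma_1})$ is $\kappa$-independent, the action \eqref{eq:actionfunctional} depends on $\kappa$ only through the non-negative vertex term carrying the prefactor $\kappa^2$, so the functionals are pointwise ordered on a common competitor set and the infima inherit the order (this also covers competitors with infinite vertex action, using $\kappa^2\cdot(+\infty)=+\infty$). The paper instead argues on the dual side via Theorem \ref{thm:dualityexistence}: it writes $\mathcal{W}^2_\kappa$ as the supremum of the $\kappa$-independent boundary functional over the set $S^\kappa$ of smooth subsolutions of the Hamilton--Jacobi system \eqref{eq:Lagrangedual}, and observes that $S^\kappa$ is non-decreasing in $\kappa$ (the factor $\tfrac{1}{2\kappa^2}$ weakens the vertex constraint as $\kappa$ grows), so a supremum over a growing family is non-decreasing. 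The two arguments are exact mirror images: yours fixes the feasible set and lets the objective grow, the paper's fixes the objective and lets the feasible set grow. Yours buys independence from the duality machinery --- it needs nothing beyond Definition \ref{def:quantityW} and non-negativity of $A$ --- whereas the paper's is equally short once duality is available and stays consistent with the dual viewpoint used throughout Section \ref{sec:Network_relationships}.

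One genuine slip, though only in your closing aside: you claim that $S_{V^i}^\kappa$ \emph{shrinks} as $\kappa$ grows. It is the opposite. From \eqref{eq:DefinitionS}, if $\kappa_1\le\kappa_2$ then $a + \frac{1}{2\kappa_1^2}\bigl\vert b - \tfrac{1}{|Z(V^i)|}\sum_{j} c_j\bigr\vert^2 \le 0$ implies $a + \frac{1}{2\kappa_2^2}\bigl\vert b - \tfrac{1}{|Z(V^i)|}\sum_{j} c_j\bigr\vert^2 \le 0$, i.e.\ $S_{V^i}^{\kappa_1}\subseteq S_{V^i}^{\kappa_2}$. Had the set shrunk, the dual supremum would be non-increasing and the alternative argument you sketch would prove the wrong inequality; the growth of the subsolution set is precisely what drives the paper's proof. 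Your main (primal) argument is unaffected by this.
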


\begin{proof}
    Note that the set of smooth subsolutions
    \begin{align*}
        S^\kappa \coloneqq \Bigg\lbrace (\pmb{\phi_t}, \pmb{\psi_t}) \in \mathcal{C} \colon \qquad
        \sum_{j=1}^m \partial_t \phi^j_t + \frac{1}{2} \vert \partial_x \phi^j_t \vert ^2 \leq 0 
        \quad \text{and} \quad
        \sum_{i=1}^n \partial_t \psi^i_t + \frac{1}{2 \kappa^2} \Big\vert \psi^i_t - \frac{1}{|Z(V^i)|}\sum_{j\in Z(V^i)} \phi^j_t \Big\vert^2 \leq 0 \Bigg\rbrace 
    \end{align*}
    is non-decreasing in $\kappa$. The monotonicity immediately follows from the duality in Theorem \ref{thm:dualityexistence}, i.e.
    \begin{align*}
        \mathcal{W}^2_\kappa(\pmb{\rho_0},\pmb{\rho_1},\pmb{\gamma_0},\pmb{\gamma_1}) = \sup_{(\pmb{\phi_t}, \pmb{\psi_t}) \in S^\kappa} \Big\lbrace \sum_{j=1}^m \int_{\overline{E^j}} \big(\phi^j_1 ~ d \rho^j_1 - \phi^j_0 ~d \rho^j_0 \big)
        + \sum_{i=1}^n \psi^i_1 \gamma^i_1 - \psi^i_0  \gamma^i_0 \Big\rbrace ~ .
    \end{align*}
\end{proof}

Now, let us finally discuss the limit case $\kappa \rightarrow \infty$. 
In this case, transporting mass over the vertices becomes more and more costly and, intuitively, we expect that in the limit transporting mass over the vertices is prohibited.

In more detail, we recover two interesting scenarios depending on the initial concentration and the terminal concentration on the network.
The first scenario is the case where the masses are incompatibility, i.e. $\Vert \rho^j_0 \Vert \neq \Vert \rho^j_1 \Vert$ for one $1 \le j \le m$ or $\pmb{\gamma_0^i} \neq \pmb{\gamma_1^i} $ for a given $1 \le i \le n$.
More precisely, this means that there exists at least one edge or one vertex where the corresponding initial mass is different than the terminal one. 
As mass cannot be transported over the vertices, we expect that the distance functional $\mathcal{W}_\kappa^2$ goes to infinity in this case.
In the second scenario the masses are compatible. As we shall see, the distance functional will converge to a sum of classical Wasserstein metrices on each edge.
In order to proof this assumption, we will need a lemma for each scenario. 
We start with the lemma for the incompatible scenario:

\begin{lemma}[{adapted to \cite[Prop. 6.2]{monsaingeon_new_2020}}]
    \label{lem:incombatiblemasseslimitinfinity}
    For any initial and terminal network concentrations $(\pmb{\rho_0},\pmb{\rho_1},\pmb{\gamma_0},\pmb{\gamma_1})$ and any geodesic $(\pmb{\rho_t}, \pmb{F_t} ; \pmb{\gamma_t}, \pmb{f_t})$, there holds
    \begin{align*}
        \mathcal{W}^2_\kappa(\pmb{\rho_0},\pmb{\rho_1},\pmb{\gamma_0},\pmb{\gamma_1}) \geq \frac{\kappa^2}{2} \sum_{i=1}^n \Vert f^i_t \Vert^2 \geq \frac{\kappa^2}{2} \sum_{i=1}^n \Big\vert  \gamma^i_1 - \gamma^i_0 \Big\vert^2 \; .
    \end{align*}
\end{lemma}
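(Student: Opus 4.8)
The plan is to discard the non-negative edge contribution to the action, reduce the lower bound to the Fisher--Rao term at the vertices, and then apply Cauchy--Schwarz together with mass conservation before relating the resulting time-norm of $f^i_t$ to the mass difference via the continuity equation. Throughout I interpret $\Vert f^i_t\Vert$ as the $L^1(0,1)$-norm in time, $\Vert f^i_t\Vert=\int_0^1|f^i_t|\,dt$, consistently with the convention $\Vert\rho^j_t\Vert=\Vert\rho^j_t\Vert_{L^1(E^j)}$ fixed earlier.

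First I would invoke Theorem \ref{thm:dualityexistence} so that a geodesic exists and realises $\mathcal{W}^2_\kappa(\pmb{\rho_0},\pmb{\rho_1},\pmb{\gamma_0},\pmb{\gamma_1})=\mathcal{A}(\pmb{\rho_t},\pmb{F_t};\pmb{\gamma_t},\pmb{f_t})$; in particular the action is finite, so Proposition \ref{prop:propertiesofthesolutions} applies and the densities $u^j_t$ are well-defined. Using the alternative representation of the action from Proposition \ref{prop:propertiesofthesolutions}(ii) and dropping the non-negative edge term $\tfrac12\sum_j\iint|u^j_t|^2\,d\rho^j_t\,dt$, I obtain
\begin{align*}
\mathcal{W}^2_\kappa(\pmb{\rho_0},\pmb{\rho_1},\pmb{\gamma_0},\pmb{\gamma_1})\;\ge\;\frac{\kappa^2}{2}\sum_{i=1}^n\int_0^1\frac{|f^i_t|^2}{\gamma^i_t}\,dt.
\end{align*}

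For the first inequality I would fix $i$ and estimate, by Cauchy--Schwarz (the integrand being interpreted as $0$ wherever $\gamma^i_t=0$, which is forced by finiteness of the action via the convention for $A$),
\begin{align*}
\Vert f^i_t\Vert=\int_0^1|f^i_t|\,dt=\int_0^1\frac{|f^i_t|}{\sqrt{\gamma^i_t}}\sqrt{\gamma^i_t}\,dt\le\left(\int_0^1\frac{|f^i_t|^2}{\gamma^i_t}\,dt\right)^{\!1/2}\left(\int_0^1\gamma^i_t\,dt\right)^{\!1/2}.
\end{align*}
Mass conservation from Proposition \ref{prop:propertiesofthesolutions}(i), i.e. $\sum_j\Vert\rho^j_t\Vert+\sum_i\gamma^i_t=1$ for a.e.\ $t$, forces $\gamma^i_t\le 1$ and hence $\int_0^1\gamma^i_t\,dt\le 1$. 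Squaring then gives $\int_0^1\frac{|f^i_t|^2}{\gamma^i_t}\,dt\ge\Vert f^i_t\Vert^2$, and summing over $i$ yields the first asserted inequality.

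For the second inequality I would integrate the vertex continuity equation $\partial_t\gamma^i_t=f^i_t$ from \eqref{eq:ContuinityEquation} in time, made rigorous by testing the weak form \eqref{eq:weakformualtionvertices} with the constant function and invoking narrow continuity of $t\mapsto\gamma^i_t$ from Proposition \ref{prop:propertiesofthesolutions}(iii) to attach the endpoint values; this gives $\gamma^i_1-\gamma^i_0=\int_0^1 f^i_t\,dt$ up to the sign convention of the weak formulation, which is irrelevant here. The triangle inequality then yields $|\gamma^i_1-\gamma^i_0|\le\int_0^1|f^i_t|\,dt=\Vert f^i_t\Vert$, and squaring and summing over $i$ closes the chain. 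The elementary estimates are routine; the two points requiring care are the correct reading of $\Vert f^i_t\Vert$ and the endpoint identity through the weak form, while the genuine obstacle is the uniform bound $\int_0^1\gamma^i_t\,dt\le 1$, which is exactly where the finiteness of the action and mass conservation in Proposition \ref{prop:propertiesofthesolutions}(i) are indispensable, since without $\gamma^i_t\le 1$ the Cauchy--Schwarz step would not close.
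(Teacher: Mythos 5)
Your proof is correct and follows essentially the same route as the paper: discard the non-negative edge term of the action, combine the bound $\gamma^i_t\le 1$ (from mass conservation in Proposition \ref{prop:propertiesofthesolutions}) with Cauchy--Schwarz/Jensen to obtain $\int_0^1 \frac{\vert f^i_t\vert^2}{\gamma^i_t}\,dt\ge\Vert f^i_t\Vert^2$, and then control $\vert\gamma^i_1-\gamma^i_0\vert$ by $\Vert f^i_t\Vert$ via the vertex continuity equation. The only cosmetic differences are that you fold the two estimates into a single weighted Cauchy--Schwarz step and justify the endpoint identity explicitly with a constant test function, whereas the paper uses the pointwise bound plus Jensen's inequality.
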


\begin{proof}
    Applying Theorem \ref{thm:dualityexistence} allows us to choose a geodesic $\pmb{\hat{\mu}_t} = (\pmb{\rho_t}, \pmb{F_t} ; \pmb{\gamma_t}, \pmb{f_t}, \pmb{\eta_t} )$ from $\pmb{\rho_0}$ to $\pmb{\rho_1}$ and by definition we obtain
    \begin{align*}
        \mathcal{W}^2_\kappa(\pmb{\rho_0},\pmb{\rho_1},\pmb{\gamma_0},\pmb{\gamma_1}) = \mathcal{A}(\pmb{\mu_t}) 
        \geq \frac{\kappa^2}{2} \sum_{i=1}^n \int_0^1 \frac{\vert f^i_t \vert^2}{\gamma^i_t} ~ dt 
        \geq \frac{\kappa^2}{2} \sum_{i=1}^n \Big( \int_0^1 f^i_t ~ dt \Big)^2
        = \frac{\kappa^2}{2} \sum_{i=1}^n \Vert f^i_t \Vert^2 \; .
    \end{align*}
    where we used Proposition \ref{prop:propertiesofthesolutions} (ii), i.e. $ \gamma^i_t \leq \Vert \varsigma \Vert =1$, as $\varsigma$ is a probability measure, and Jensen's inequality.
    The continuity equation $\partial_t \gamma^i_t = f^i_t$ finally controls the mass difference as (again using Jensen)
    \begin{align*}
        \Vert f^i_t \Vert \geq \Big\vert \int_0^1 1 ~ d f^i_t \Big\vert \overset{\text{Def.} \ref{def:weakformcontinuityequation}}{=} \Big\vert \gamma^i_1 - \gamma^i_0 \Big\vert \; .
    \end{align*}
\end{proof}

The following result helps to solve weighted optimisation problems and will be applied in the proof of the next proposition in the case of compatible masses:

\begin{lemma}[{\cite[Lemma 6.3]{monsaingeon_new_2020}}]
    \label{lem:Hilfslemma}
    Let $K$ be a compact set, take $\mathfrak{f}, \mathfrak{g} \colon K \rightarrow \R^+ \cup \lbrace + \infty \rbrace$ two proper, lower semi-continuous functions, and consider
    \begin{align*}
       \mathfrak{h}_\kappa (x) \coloneqq \mathfrak{f}(x) + \kappa^2 \mathfrak{g}(x), \qquad \kappa >0.
    \end{align*}
    Assume that for all $\kappa$ there is a minimiser $x_\kappa \in K$ of $\mathfrak{h_\kappa}$. Then as $\kappa \rightarrow + \infty$ any cluster point $x_*$ of $\lbrace x_\kappa \rbrace$ minimises $\mathfrak{f}$ in $\mathrm{argmin}\, \mathfrak{g}$.
\end{lemma}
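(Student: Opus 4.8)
The plan is to run the standard penalisation (or $\Gamma$-limit) argument: as $\kappa \to \infty$ the coercive term $\kappa^2 \mathfrak{g}$ drives the penalised minimisers onto the constraint set $\operatorname{argmin}\mathfrak{g}$, and once they are pinned there the residual energy $\mathfrak{f}$ is what gets selected. I would fix a cluster point $x_*$ together with a sequence $\kappa_n \to \infty$ such that $x_{\kappa_n}\to x_*$ (such a sequence exists by compactness of $K$), and abbreviate $x_n \coloneqq x_{\kappa_n}$. Set $m_g \coloneqq \min_K \mathfrak{g}$, which is attained and finite because $\mathfrak{g}$ is proper, lower semicontinuous and $K$ compact.

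The first step is to show $x_* \in \operatorname{argmin}\mathfrak{g}$. I would pick a competitor $\bar y \in \operatorname{argmin}\mathfrak{g}$ with $\mathfrak{f}(\bar y) < \infty$ (this uses that the constraint set meets the domain of $\mathfrak{f}$; see the caveat below). Optimality of $x_n$ for $\mathfrak{h}_{\kappa_n}$ gives
\begin{align*}
\mathfrak{f}(x_n) + \kappa_n^2 \mathfrak{g}(x_n) \le \mathfrak{f}(\bar y) + \kappa_n^2 m_g,
\end{align*}
and since $\mathfrak{f}(x_n) \ge 0$ and $\mathfrak{g}(x_n) \ge m_g$ this yields $0 \le \kappa_n^2\bigl(\mathfrak{g}(x_n) - m_g\bigr) \le \mathfrak{f}(\bar y)$, hence $\mathfrak{g}(x_n) - m_g \le \mathfrak{f}(\bar y)/\kappa_n^2 \to 0$, i.e. $\mathfrak{g}(x_n) \to m_g$. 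Lower semicontinuity of $\mathfrak{g}$ then forces $\mathfrak{g}(x_*) \le \liminf_n \mathfrak{g}(x_n) = m_g$, and combined with $\mathfrak{g}(x_*) \ge m_g$ this gives $x_* \in \operatorname{argmin}\mathfrak{g}$.

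Next I would show that $x_*$ minimises $\mathfrak{f}$ over $\operatorname{argmin}\mathfrak{g}$. For an arbitrary $y \in \operatorname{argmin}\mathfrak{g}$, rearranging the optimality inequality for $x_n$ gives
\begin{align*}
\mathfrak{f}(x_n) \le \mathfrak{f}(y) + \kappa_n^2\bigl(\mathfrak{g}(y) - \mathfrak{g}(x_n)\bigr) = \mathfrak{f}(y) + \kappa_n^2\bigl(m_g - \mathfrak{g}(x_n)\bigr) \le \mathfrak{f}(y),
\end{align*}
where the last step uses $\mathfrak{g}(x_n) \ge m_g$, so the penalisation contributes a nonpositive term. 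Passing to the $\liminf$ and invoking lower semicontinuity of $\mathfrak{f}$ gives $\mathfrak{f}(x_*) \le \liminf_n \mathfrak{f}(x_n) \le \mathfrak{f}(y)$. Since $y \in \operatorname{argmin}\mathfrak{g}$ was arbitrary and $x_* \in \operatorname{argmin}\mathfrak{g}$ by the previous step, this is precisely the claim.

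The one genuinely delicate point is the existence of the competitor $\bar y \in \operatorname{argmin}\mathfrak{g} \cap \operatorname{dom}\mathfrak{f}$ in the first step: this nondegeneracy is what turns the coercive penalty into the quantitative estimate $\mathfrak{g}(x_n) - m_g \le \mathfrak{f}(\bar y)/\kappa_n^2$ and thus into $\mathfrak{g}(x_n)\to m_g$. If instead $\operatorname{argmin}\mathfrak{g} \cap \operatorname{dom}\mathfrak{f} = \emptyset$, the constrained infimum of $\mathfrak{f}$ is $+\infty$ and the assertion becomes vacuous once $x_* \in \operatorname{argmin}\mathfrak{g}$ is established; in the intended application $\mathfrak{f}$ is finite on the relevant part of $\operatorname{argmin}\mathfrak{g}$, so this degenerate case does not occur. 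Everything else is soft and uses only the stated hypotheses (compactness of $K$, nonnegativity, properness and lower semicontinuity of $\mathfrak{f}$ and $\mathfrak{g}$).
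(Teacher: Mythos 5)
Your proof is correct, and there is in fact nothing in the paper to compare it against: the paper does not prove this lemma at all, it imports it by citation from \cite[Lemma 6.3]{monsaingeon_new_2020}. Your argument---testing optimality of $x_{\kappa_n}$ against a competitor $\bar y \in \operatorname{argmin}\mathfrak{g}$ with $\mathfrak{f}(\bar y)<\infty$, deducing $0 \le \mathfrak{g}(x_n)-m_g \le \mathfrak{f}(\bar y)/\kappa_n^2 \to 0$, and then applying lower semicontinuity once to $\mathfrak{g}$ (membership in $\operatorname{argmin}\mathfrak{g}$) and once to $\mathfrak{f}$ (constrained minimality)---is exactly the standard penalisation argument that the cited reference uses, so there is no methodological divergence to report.

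One refinement to your closing caveat, which is the only place where your write-up is imprecise. You say that if $\operatorname{argmin}\mathfrak{g}\cap\operatorname{dom}\mathfrak{f}=\emptyset$ the assertion ``becomes vacuous once $x_*\in\operatorname{argmin}\mathfrak{g}$ is established''; in fact, in that degenerate case the membership $x_*\in\operatorname{argmin}\mathfrak{g}$ can itself fail, so the lemma as stated is then genuinely false under its natural reading, not vacuously true. Take $K=[0,1]$, $\mathfrak{g}(0)=0$ and $\mathfrak{g}(x)=1$ for $x>0$, $\mathfrak{f}(0)=+\infty$ and $\mathfrak{f}(x)=1/x$ for $x>0$: both functions are proper, nonnegative and lower semicontinuous, $\mathfrak{h}_\kappa$ is uniquely minimised at $x_\kappa=1$ for every $\kappa$, so the only cluster point is $x_*=1\notin\operatorname{argmin}\mathfrak{g}=\lbrace 0\rbrace$. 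So the nondegeneracy hypothesis $\operatorname{argmin}\mathfrak{g}\cap\operatorname{dom}\mathfrak{f}\neq\emptyset$ is not a removable convenience of your proof but a necessary assumption that should be stated explicitly; it does hold in the paper's application (in the compatible-mass case the solutions with $\pmb{f_t}=\pmb{0}$ lie in $\operatorname{argmin}\mathfrak{g}$ and have finite edge action $\mathcal{W}^2_{\mathcal{E}}$), so with that hypothesis added your proof is complete and covers everything the paper needs.
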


\begin{proposition}[{adapted to \cite[Thm. 6]{monsaingeon_new_2020}}]
    For fixed $(\pmb{\rho_0},\pmb{\rho_1},\pmb{\gamma_0},\pmb{\gamma_1})$ we obtain
    \begin{align}
        \label{eq:convergencedistance}
        \mathcal{W}^2_\kappa(\pmb{\rho_0},\pmb{\rho_1},\pmb{\gamma_0},\pmb{\gamma_1}) \underset{\kappa \; \rightarrow \; + \infty}{\rightarrow} \begin{cases} \mathcal{W}_{\mathcal{E}}^2(\pmb{\rho_0},\pmb{\rho_1})  & \text{if } \pmb{\gamma_0} = \pmb{\gamma_1}, \\ +\infty & \text{otherwise,} \end{cases}
    \end{align}
Let moreover $\pmb{\mu_t}^\kappa = (\pmb{\rho_t}^\kappa, \pmb{F_t}^\kappa ; \pmb{\gamma_t}^\kappa, \pmb{f_t}^\kappa)$ be any $\mathcal{W}^2_\kappa(\pmb{\rho_0},\pmb{\rho_1},\pmb{\gamma_0},\pmb{\gamma_1})$-geodesic.
    If the node masses are compatible, i.e.  $\pmb{\gamma_0} = \pmb{\gamma_1}$, $\Vert \rho^j_0 \Vert = \Vert \rho^j_1 \Vert$ for all $j \in \lbrace 1, \ldots , m \rbrace$
     then up to a subsequence
    \begin{align*}
        (\pmb{\rho_t}^\kappa, \pmb{F_t}^\kappa ) \rightarrow (\pmb{\rho_t}, \pmb{F_t}), \quad \text{and} \quad \Vert f_t^\kappa \Vert \rightarrow 0
    \end{align*}
    where $(\pmb{\rho_t}, \pmb{F_t})$ is a $\mathcal{W}_{\mathcal{E}}^2$-geodesic .
\end{proposition}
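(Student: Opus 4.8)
The claim \eqref{eq:convergencedistance} splits according to whether $\pmb{\gamma_0} = \pmb{\gamma_1}$, and I would treat the two alternatives separately, extracting the convergence of geodesics only afterwards. The divergent case is immediate: if $\pmb{\gamma_0} \neq \pmb{\gamma_1}$ then $\gamma^i_0 \neq \gamma^i_1$ for at least one index $i$, hence $\sum_{i=1}^n |\gamma^i_1 - \gamma^i_0|^2 > 0$, and Lemma \ref{lem:incombatiblemasseslimitinfinity} gives $\mathcal{W}^2_\kappa \geq \frac{\kappa^2}{2}\sum_{i=1}^n |\gamma^i_1 - \gamma^i_0|^2 \to +\infty$ as $\kappa \to \infty$. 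The remainder of the plan concerns the compatible case $\pmb{\gamma_0} = \pmb{\gamma_1}$.

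For that case the plan is to apply the weighted-minimisation Lemma \ref{lem:Hilfslemma} with $\mathfrak{f}(\pmb{\mu_t}) = \sum_{j=1}^m \iint \frac{|F^j_t|^2}{2\rho^j_t}$ the edge part of the action and $\mathfrak{g}(\pmb{\mu_t}) = \sum_{i=1}^n \int_0^1 \frac{|f^i_t|^2}{2\gamma^i_t}\,dt$ the vertex reaction part, so that $\mathfrak{h}_\kappa = \mathfrak{f} + \kappa^2 \mathfrak{g} = \mathcal{A}$ and $\mathcal{W}^2_\kappa = \min \mathfrak{h}_\kappa$ is attained at a geodesic $\pmb{\mu_t}^\kappa$ by Theorem \ref{thm:dualityexistence}. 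First I would secure compactness: Proposition \ref{prop:Sandwichtheorem}(iii) provides the uniform upper bound $\mathcal{W}^2_\kappa \leq \mathcal{W}_{\mathcal{E}}^2$, whence $\mathfrak{f}(\pmb{\mu_t}^\kappa) \leq \mathcal{W}_{\mathcal{E}}^2$ and $\mathfrak{g}(\pmb{\mu_t}^\kappa) \leq \mathcal{W}_{\mathcal{E}}^2/\kappa^2$ uniformly in $\kappa$. The resulting uniform momentum bound together with the bounded-Lipschitz estimate of Proposition \ref{prop:propertiesofthesolutions}(iii) — whose constant $C_\kappa = 2\sqrt{2(n+m)}\max\{1,1/\kappa\}$ is bounded for $\kappa \geq 1$ — yields equicontinuity in time, so that all minimisers with $\kappa \geq 1$ lie in a common weak-$*$/narrowly compact set $K$, the compact domain required by Lemma \ref{lem:Hilfslemma}.

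Since $A$ is jointly lower semi-continuous, $\mathfrak{f}$ and $\mathfrak{g}$ are lower semi-continuous on $K$, and $\mathrm{argmin}\,\mathfrak{g} = \{\mathfrak{g}=0\}$ consists precisely of the admissible quadruples with $f^i_t = 0$; via $\partial_t \gamma^i_t = f^i_t$ and $\gamma^i_0 = \gamma^i_1$ this forces $\sum_{j\in Z(V^i)} F^j_t\cdot\nu_{i,j} = 0$, i.e.\ exactly Kirchhoff's law, so that minimising $\mathfrak{f}$ over $\mathrm{argmin}\,\mathfrak{g}$ is the problem defining $\mathcal{W}_{\mathcal{E}}^2(\pmb{\rho_0},\pmb{\rho_1})$. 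Lemma \ref{lem:Hilfslemma} then gives that every cluster point $\pmb{\mu_t}^* = (\pmb{\rho_t},\pmb{F_t};\pmb{\gamma_t},\pmb{0})$ of $\{\pmb{\mu_t}^\kappa\}$ is a $\mathcal{W}_{\mathcal{E}}^2$-geodesic. For the value I combine the upper bound with lower semi-continuity: along a converging subsequence $\mathcal{W}^2_{\kappa_k} \geq \mathfrak{f}(\pmb{\mu_t}^{\kappa_k})$ and $\liminf_k \mathfrak{f}(\pmb{\mu_t}^{\kappa_k}) \geq \mathfrak{f}(\pmb{\mu_t}^*) = \mathcal{W}_{\mathcal{E}}^2$, so that $\mathcal{W}^2_\kappa \to \mathcal{W}_{\mathcal{E}}^2$ (the monotonicity of $\kappa\mapsto\mathcal{W}^2_\kappa$ even ensures the full limit exists). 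The convergence $(\pmb{\rho_t}^\kappa,\pmb{F_t}^\kappa)\to(\pmb{\rho_t},\pmb{F_t})$ to a $\mathcal{W}_{\mathcal{E}}^2$-geodesic is read off the same subsequence, and $\Vert f^i_t\Vert \to 0$ follows from $\mathfrak{g}(\pmb{\mu_t}^\kappa)\leq \mathcal{W}_{\mathcal{E}}^2/\kappa^2 \to 0$ with the Jensen estimate $\Vert f^i_t\Vert \leq (\int_0^1 \frac{|f^i_t|^2}{\gamma^i_t}\,dt)^{1/2}$ used for Lemma \ref{lem:incombatiblemasseslimitinfinity}; under the per-edge assumption $\Vert\rho^j_0\Vert = \Vert\rho^j_1\Vert$ the limit decouples and $\mathcal{W}_{\mathcal{E}}^2 = \sum_j \mathcal{W}^2_{E^j}$ as in \eqref{eq:Wasserstein_each_edge}.

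I expect the genuine obstacle to be the compactness and lower-semicontinuity step rather than the algebra. One must fix a topology in which the whole quadruple $(\pmb{\rho_t}^\kappa,\pmb{F_t}^\kappa;\pmb{\gamma_t}^\kappa,\pmb{f_t}^\kappa)$ converges, verify that the flux traces $F^j_t\cdot\nu_{i,j}$ pass to the limit inside the weak continuity equation of Definition \ref{def:weakformcontinuityequation} so that the limit is admissible for $\mathcal{W}_{\mathcal{E}}^2$ and no mass is created or lost at the vertices, and confirm that $\mathfrak{f}$ and $\mathfrak{g}$ are lower semi-continuous for exactly that topology. This is precisely what renders Lemma \ref{lem:Hilfslemma} applicable and constitutes the crux of the argument.
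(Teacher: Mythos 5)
Your proposal is correct and follows essentially the same route as the paper's proof: Lemma \ref{lem:incombatiblemasseslimitinfinity} for the divergent case, then in the compatible case the sandwich bound $\mathcal{W}^2_\kappa \leq \mathcal{W}_{\mathcal{E}}^2$ from Proposition \ref{prop:Sandwichtheorem}(iii), narrow compactness of the geodesics, the application of Lemma \ref{lem:Hilfslemma} with the same edge/vertex splitting $\mathfrak{f} + \kappa^2\mathfrak{g}$, the identification of $\mathrm{argmin}\,\mathfrak{g}$ with Kirchhoff-law solutions, and the $\limsup$/$\liminf$ sandwich via lower semi-continuity for the convergence of the distance. The only cosmetic difference is that the paper obtains compactness directly from boundedness via Prokhorov's theorem (plus stability of the linear continuity equations under narrow limits), whereas you additionally invoke the bounded-Lipschitz equicontinuity estimate; both are valid, and your closing remark about the topology and lower semi-continuity being the crux is exactly what the paper handles by citing \cite[Theorem 3.3]{bouchitte_new_1990}.
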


\begin{proof}
    For incompatible node masses, i.e.
    $\pmb{\gamma_0} \neq \pmb{\gamma_1}$, we conclude from Lemma \ref{lem:incombatiblemasseslimitinfinity} that
    \begin{align*}
        \mathcal{W}^2_\kappa (\pmb{\rho_0},\pmb{\rho_1},\pmb{\gamma_0},\pmb{\gamma_1}) \underset{\kappa \; \rightarrow + \infty}{\rightarrow} + \infty \; . 
    \end{align*}
    In the case of compatible node masses, we can apply Proposition \ref{prop:Sandwichtheorem} and Lemma \ref{lem:incombatiblemasseslimitinfinity} to control the boundary flux term  via
    \begin{align*}
        \sum_{i=1}^n \Vert f^\kappa \Vert^2 \leq \frac{2}{\kappa^2} \mathcal{W}^2_\kappa (\pmb{\rho_0},\pmb{\rho_1},\pmb{\gamma_0},\pmb{\gamma_1}) \leq \frac{2}{\kappa^2} \mathcal{W}_{\mathcal{E}}^2(\pmb{\rho_0},\pmb{\rho_1}) \rightarrow 0.
    \end{align*}
    To control the momentum $\pmb{F}^\kappa$, we observe from Proposition \ref{prop:Sandwichtheorem} that any geodesic satisfies
    \begin{align}
        \label{eq:BoundaryofFlow}
        \frac{1}{2} \sum_{j=1}^m \iint_{\overline{E^j} \times [0,1]} \frac{\vert F^{j,\kappa}_t \vert^2}{\rho^{j,\kappa}_t} ~dxdt \leq  \mathcal{W}^2_\kappa (\pmb{\rho_0},\pmb{\rho_1},\pmb{\gamma_0},\pmb{\gamma_1}) \leq  \mathcal{W}_{\mathcal{E}}^2  (\pmb{\rho_0},\pmb{\rho_1}) 
    \end{align}
    uniformly in $\kappa >0$. With $\pmb{F_t}^\kappa = \pmb{u_t}^\kappa \pmb{\rho_t}^\kappa$ and using Jensen's inequality
    \begin{align*}
        \Vert \pmb{F_t}^\kappa \Vert^2 = \sum_{j=1}^m \Big( \iint_{\overline{E^j} \times [0,1]} \vert u^\kappa_t \vert d\rho^\kappa_t \Big)^2
        \leq \sum_{j=1}^m \iint_{\overline{E^j} \times [0,1]} \vert u^\kappa_t \vert^2 ~ d\rho^\kappa_t
        = \sum_{j=1}^m \iint_{\overline{E^j} \times [0,1]} \frac{\vert F^{j,\kappa}_t \vert^2}{\rho^{j,\kappa}_t} ~ dxdt \; .
    \end{align*}
    Therefore $\pmb{F_t}^\kappa$ is bounded by $\Vert \pmb{F_t}^\kappa \Vert \leq \mathcal{W}^2_{\mathcal{E}} (\pmb{\rho_0},\pmb{\rho_1}) = C$ using \eqref{eq:BoundaryofFlow}, where $C$ does not dependent on $\kappa$. 
    Moreover $\pmb{\rho_t}^\kappa, \pmb{\omega_t}^\kappa \leq 1$, and the geodesic $\pmb{\mu_t}^\kappa$ is bounded.
    
    Our aim now is to apply Lemma \ref{lem:Hilfslemma}, so we need compactness which is guaranteed by Prokhorov's Theorem:
    In a Polish space, bounded measures are tight ($\mathcal{G}$ is a Polish space as a closed subset of the Polish space $\R^2$ ).
    Thus we can apply Prokhorov's theorem and we obtain the narrow compactness $(\pmb{\rho_t}^\kappa_t, \pmb{F_t}^\kappa_t ; \pmb{\gamma_t}^\kappa_t , \pmb{f_t}^\kappa_t) \overset{*}{\rightharpoonup} (\pmb{\rho_t}, \pmb{F_t} ; \pmb{\gamma_t}, \pmb{0})$ up to subsequences, and we only have to proof that the limit $(\pmb{\rho_t}, \pmb{F_t})$ is a $\mathcal{W}_{\mathcal{E}}^2$-geodesic.
    
    We now use Lemma \ref{lem:Hilfslemma} and set $K$ to be the set of all geodesics for all values of $\kappa > 1$, which is narrowly compact by the previous discussion and because the linear continuity equations \eqref{eq:weakformualtionedges} and \eqref{eq:weakformualtionvertices} are stable under narrow limits.
    The functions
    \begin{align*}
        \mathfrak{f} (c, \pmb{{}f_t}) \coloneqq \frac{1}{2} \sum_{j=1}^m \iint_{\overline{E^j} \times [0,1]} \frac{\vert F^j_t \vert^2}{\rho^j_t} ~dxdt \quad \text{and} \quad
        \mathfrak{g} (\pmb{\rho_t}, \pmb{F_t}, \pmb{\gamma_t}, \pmb{f_t}) \coloneqq \frac{1}{2} \sum_{i=1}^n \int_0^1 \frac{{f^i_t}^2}{\gamma^i_t} ~dt
    \end{align*}
    are convex, proper, semi-continuous w.r.t. the narrow convergence of measures \cite[Theorem 3.3]{bouchitte_new_1990}, and geodesics are minimisers of $\mathcal{A} = \mathfrak{f} + \kappa^2 \mathfrak{g}$.
    The definition of $\frac{{f^i_t}^2}{\gamma^i_t}$ in the extended sense implies that the minimisers of $\mathfrak{g}$ are solutions of the continuity equations \eqref{eq:weakformualtionedges} and \eqref{eq:weakformualtionvertices} of the form $\pmb{\mu_t} = (\pmb{\rho_t}, \pmb{F_t} ; \pmb{\gamma_t}, \pmb{0})$ and assign the value $\iint \frac{{f^i_t}^2}{\gamma^i_t} = 0$ to $\mathfrak{g}$.
    This set of solutions of $\mathcal{CE}(\pmb{\rho_0},\pmb{\rho_1},\pmb{\gamma_0},\pmb{\gamma_1})$ with $f^i_t = 0$ obviously identifies with the whole set of pairs $(\pmb{\rho_t}, \pmb{F_t})$ of independent solutions of $\partial_t \rho^j_t + \partial_x F^j_t$ for all $j$ with Kirchhoff's law 
    $$\partial_t \gamma^i_t =  \sum_{j \in Z(V^i)} F^j_t \cdot \nu_{i,j} = 0$$ at the boundary. 
    Consequently, the limit-geodesic $(\pmb{\rho_t}, \pmb{F_t})$ is a geodesic for $\mathcal{W}_{\mathcal{E}}^2$.
    
    Finally, let us analyse the convergence in distance \eqref{eq:convergencedistance}. Applying Proposition \ref{prop:Sandwichtheorem} (iii) and the lower semi-continuity of the actions with $(\pmb{\rho_t}^\kappa, \pmb{F_t}^\kappa ) \overset{*}{\rightharpoonup} (\pmb{\rho_t}, \pmb{F_t}) $ gives that
    \begin{align*}
        \limsup_{\kappa \rightarrow + \infty} \mathcal{W}^2_\kappa (\pmb{\rho_0},\pmb{\rho_1},\pmb{\gamma_0},\pmb{\gamma_1}) &\leq \mathcal{W}_{\mathcal{E}}^2 (\pmb{\rho_0},\pmb{\rho_1}) =  \sum_{j=1}^m \iint_{\overline{E^j} \times [0,1]} \frac{\vert F^j_t\vert^2}{2 \rho^j_t} ~dxdt \leq \liminf_{\kappa \rightarrow + \infty}  \sum_{j=1}^m \iint_{\overline{E^j} \times [0,1]} \frac{\vert F^{j,\kappa}_t \vert^2}{2\rho^{j,\kappa}_t} ~dxdt
        \\
        &\leq \liminf_{\kappa \rightarrow + \infty} \Big( \iint_{\overline{E^j} \times [0,1]} \frac{\vert F^{j,\kappa}_t \vert^2}{2\rho^\kappa_t}  ~dxdt + \int_0^1 \kappa^2 \frac{\vert f^{j,\kappa}_t \vert^2}{2\gamma^\kappa_t}  ~dt \Big)
        = \liminf_{\kappa \rightarrow + \infty} \mathcal{W}^2_\kappa (\pmb{\rho_0},\pmb{\rho_1},\pmb{\gamma_0},\pmb{\gamma_1}) ,
    \end{align*}
    where we used that $(\pmb{\rho_t}^\kappa, \pmb{F_t}^\kappa ; \pmb{\gamma_t}^\kappa, \pmb{f_t}^\kappa)$ is a geodesic in the last equality. Thus, $\liminf = \limsup = \lim$ in the previous chain of inequalities, which finishes the proof.
\end{proof}

\section{Gradient Flows}

Given the metric structure it appears natural to study corresponding gradient flows, which opens various questions for future research beyond the scope of this paper. Here will only provide a formal derivation of the gradient flow equations for energy functionals of the form
\begin{equation}
    {\cal E}(\pmb{\rho},\pmb{\gamma}) =  \sum_{j=1}^m {\cal G}_j(\rho^j) + \sum_{j=1}^n {\cal H}_i(\gamma^i).
\end{equation}
The gradient flow structures can be derived in a standard way from a minimizing movement scheme, as in \cite{ambrosio_2008,santambrogio2017euclidean}, which constructs a time-discrete sequence $(\pmb{\rho}^{\tau,k},\pmb{\gamma}^{\tau,k})$, $k \in \N$, whose iterates are obtained by minimizing the functional 
$$ \frac{1}{2\tau} {\cal W}_\kappa^2(\pmb{\rho}^\tau,\pmb{\rho}^{\tau,k-1},\pmb{\gamma}^\tau,\pmb{\gamma}^{\tau,k-1}) + {\cal E}(\pmb{\rho}^\tau,\pmb{\gamma}^\tau) $$
with respect to $(\pmb{\rho}^\tau,\pmb{\gamma}^\tau)$ for a given pair $(\pmb{\rho}^{\tau,k-1},\pmb{\gamma}^{\tau,k-1})$,
By a formal limit procedure this yields that the gradient flow satisfies the transport problem with fluxes 
\begin{align*}
F_t^j &= \rho_t^j \partial_x \phi_t^j \\
f_t^i &= \kappa^{-2} \gamma_t^i (\psi_t^i - \frac{1}{|Z(V_i)|}\sum_{j \in Z(V_i)}\phi^j_t) 
\end{align*}
and dual potentials
\begin{equation*}
 \phi_t^j = - {\cal G}_j'(\rho_t^j) , \qquad \psi_t^i = - {\cal H}_i'(\gamma_t^i).
\end{equation*}
This implies that the gradient flows are of the form
\begin{align*}
    \partial_t \rho_t^j &= \partial_x(\rho_t^j \partial_x {\cal G}_j'(\rho_t^j)) \\
    \partial_t \gamma_t^i &= - \kappa^{-2} \gamma_t^i ( {\cal H}_i'(\gamma_t^i) - \frac{1}{|Z(V_i)|}\sum_{j \in Z(V_i)}{\cal G}_j'(\rho_t^j)) 
\end{align*}
with the Kirchhoff condition 
\begin{equation*}
\sum_{j \in Z(V_i)} \rho_t^j \partial_x {\cal G}_j'(\rho_t^j) \cdot \nu_{i,j}  = \kappa^{-2} \gamma_t^i ( {\cal H}_i'(\gamma_T^i) - \frac{1}{|Z(V_i)|}\sum_{j \in Z(V_i)}{\cal G}_j'(\rho_t^j))
\end{equation*}
and the continuity
\begin{equation*}
   {\cal G}_j'(\rho_t^j) = {\cal G}_k'(\rho_t^k) \qquad \forall j,k \in Z(V_i).
\end{equation*}
A surprising effect of the coupling condition is the one-sided coupling of the variations of the edge energies, whose trace appears in the vertex equations. On the other hand we have the standard form as in Wasserstein gradient flows on the edges.

\begin{example}
Let us consider the standard case of a drift-diffusion equation on the edges, i.e. 
$$ {\cal G}_j(\rho^j) = \int_{E^j} \rho^j \log \rho^j  + \rho^j W_j)~dx $$
with some potential $W_j$, and some cost on the vertex concentrations
${\cal H}_i(\gamma^i) = h_i(\gamma^i)$.
\begin{align*}
    \partial_t \rho_t^j &= \partial_{xx} \rho_t^j + \partial_x(\rho_t^j W_j') \\
    \partial_t \gamma_t^i &= - \kappa^{-2} \gamma_t^i (h_i'(\gamma_t^i) - \frac{1}{|Z(V_i)|}\sum_{j \in Z(V_i)} (1+\log \rho_t^j +W_j)) 
\end{align*}
with the Kirchhoff condition 
$$
\sum_{j \in Z(V_i)} (\partial_{x} \rho_t^j + \rho_t^j W_j) \cdot \nu_{i,j}  = \kappa^{-2} \gamma_t^i ( {\cal h}_i'(\gamma_T^i) - \frac{1}{|Z(V_i)|}\sum_{j \in Z(V_i)}(1+\log \rho_t^j +W_j)).
$$
The continuity condition in the vertices becomes 
$ \log \rho_t^j +W_j = \log \rho_t^k +W_k, $
which can be reformulated as the linear transmission condition
$$ \rho_t^j e^{W_j} = \rho_t^k e^{W_k} \qquad \forall j,k \in Z(V_i).$$
 \end{example}

\vspace{2ex}
\noindent \textbf{Acknowledgments:} MB acknowledges support by the German science foundation DFG through SFB TRR 154, subproject C06. IH acknowledges support by EXC 2044 Mathematics Münster, Cluster of Excellence, Münster, funded by the German science foundation DFG.

\appendix 

\section{Formal derivation of the first order optimality conditions}
\label{sec:derivationHJ}

Based on the Lagrange calculus we present a formal derivation of the optimality conditions of \eqref{eq:minimizaionprob} which include the Hamilton-Jacobi equations already introduced in \eqref{eq:Lagrangedual}.

Using the notation $\pmb{\phi_t} = (\phi^1_t,\ldots, \phi^m_t)$ as well as $\pmb{\psi_t} = (\psi_t^1,\ldots, \psi^n_t)$ and $\pmb{\lambda_t} = (\lambda^1_t,\ldots, \lambda^n_t)$, we introduce the Lagrange functional  
\begingroup
\allowdisplaybreaks
\begin{align*}
    &\mathcal{L} \big(\pmb{\rho_t}, \pmb{\gamma_t}, \pmb{F_t}, \pmb{f_t},\pmb{\phi_t}, \pmb{\psi_t}, \pmb{\lambda_t}\big) \\
    &=
    \sum_{j=1}^m \iint_{\overline{E^j} \times [0,1]} \frac{|F^j_t|^2}{2\rho^j_t}\;dxdt + \kappa^2\sum_{i=1}^n  \int_{[0,1] } \frac{|f^i_t|^2}{2\gamma^i_t}\; dt
    + \sum_{j=1}^m\iint_{\overline{E^j} \times[0,1]} (\partial_t \rho^j_t + \partial_x F^j_t) \phi^j_t ~ dxdt \\
    &+ \sum_{i=1}^n\int_0^1 ( \partial_t \gamma_t^i - f_t^i) \psi^i_t ~dt
    + \sum_{i=1}^n\int_0^1 (f^i_t - \sum_{j \in Z(V^i)} F^j_t(V_i)  \nu_{i,j})\lambda^i_t ~dt \\ 
    &=
    \sum_{j=1}^m \iint_{\overline{E^j} \times [0,1]} \frac{|F^j_t|^2}{2\rho^j_t}\;dxdt + \kappa^2\sum_{i=1}^n  \int_{[0,1] } \frac{|f^i_t|^2}{2\gamma^i_t}\; dt
    + \sum_{j=1}^m\iint_{\overline{E^j} \times[0,1]} (- \rho^j_t\partial_t\phi^j_t -  F^j_t \partial_x \phi^j_t) ~ dxdt \\
    &    + \sum_{j=1}^m \int_{\overline{E^j}} (\rho_1^j\phi^j_t(x,1) - \rho_0^j\phi^j_t(x,0))~dx
    + \sum_{i=1}^n\sum_{j\in Z(V_i)}^m\int_0^1 F^j_t(V_i) \nu_{ij}\phi^j_t ~dt  
    \\
    &+ \sum_{i=1}^n\int_0^1 ( -\gamma_t^i\partial_t \psi^i_t - f_t^i \psi^i_t) ~dt + \sum_{i=1}^n(\gamma_1^i\psi^i_t(1) - \gamma_0^i\psi^i_t(0))\\
    &+ \sum_{i=1}^n\int_0^1 (f^i_t - \sum_{j \in Z(V^i)} F^j_t(V_i)  \nu_{i,j})\lambda^i_t ~dt.
\end{align*}
\endgroup
Calculating the gradient of  $\mathcal{L}$ into direction $\pmb{\varphi} = [\pmb {\varphi_{\rho_t}}, \pmb{\varphi_{F_t}},\pmb{\varphi_{f_t}}, \pmb{\varphi_{\gamma_t}} ]$  yields
\begingroup
\allowdisplaybreaks
\begin{align*}
    &\nabla_{\pmb{\rho_t}, \pmb{F_t}, \pmb{f_t}, \pmb{\gamma_t}} \mathcal{L} [\pmb{\varphi}] 
    \\
    &=
    \sum_{j=1}^m\iint_{\overline{E^j}\times[0,1]} \big( - \frac{\vert F^j_t \vert^2}{2 (\rho^j_t)^2} - \partial_t \phi^j_t \big) \varphi_{\rho^j_t} ~dxdt+ \sum_{j=0}^m\int_{\overline{E^j}} (\varphi_{\rho_t^j}(x,1) \phi^j_t(x,1) - \varphi_{\rho_t^j}(x,0)\phi^j_t(x,0)) ~dx
    \\
     &\quad 
    + \sum_{j=1}^m\iint_{\overline{E^j} \times[0,1]} \Big( \frac{F^j_t}{\rho^j_t} - \partial_x \phi^j_t \Big) \varphi_{F^j_t} ~dxdt  + \sum_{i=1}^n\sum_{j\in Z(V_i)}\int_0^1 (\phi^j_t + \lambda^i_t) \varphi_{F^i_t}  \nu_{ij} ~dt  \\
    &\quad + \sum_{i=1}^n\int_0^1 \Big( \kappa^2 \frac{f^i_t}{\gamma^i_t} - \psi^i_t + \lambda^i_t \Big) \varphi_{f^i_t} ~dt\\
    &\quad +  \sum_{i=1}^n \int_0^1 \Big( - \kappa^2\frac{\vert f_t^i \vert^2}{2 (\gamma_t^i)^2} - \partial_t \psi^i_t \Big) \varphi_{\gamma_t^i}  ~dt + \sum_{i=1}^n (\varphi_{\gamma_1^i}\psi^i_t(1) - \varphi_{\gamma_0^i}\psi^i_t(0))
    \; .
\end{align*}
\endgroup
Choosing the direction $\pmb{\varphi} = [\pmb{0}, \pmb{\varphi_{F_t}},\pmb{0}, \pmb{0}]$ s.t. $F_t^i\nu_{ij}=0$ for all $i\in\{0,\ldots, n\}$, $j \in Z(V_i)$ as well as $\pmb{\varphi} = [\pmb{0}, \pmb{0}, \pmb{\varphi_{f_t}},\pmb{0}]$ and setting the respective derivative to zero we obtain
\begin{align*}
    \frac{F^j_t}{\rho^j_t} - \partial_x \phi^j_t=0\quad \text{ and } \quad \kappa^2 \frac{f^i_t}{\gamma^i_t} = \psi^i_t - \lambda^i_t,
\end{align*}
for $j=1,\ldots m$ and $i=1,\ldots n$. 
From $\pmb{\varphi} = [\pmb{0}, \pmb{\varphi_{F_t}},\pmb{0}, \pmb{0}]$ s.t. $F_t^i\nu_{ij}=0$ s.t. $\varphi_{F^i_t}\nu_{ij} = \varphi_{F^i_t}\nu_{ij'}$ for all $j,j' \in Z(V_i)$ we have
\begin{align*}
    \lambda^i_t = - \frac{1}{|Z(V_i)|}\sum_{j\in Z(V_i)}\phi^j_t,\quad i=1,\ldots,n.
\end{align*}
Finally, the choices $\pmb{\varphi} = [\pmb{\varphi_{\rho_t}},\pmb{0} ,\pmb{0}, \pmb{0}]$, supported in the interior, as well as $\pmb{\varphi} = [\pmb{0}, \pmb{0},\pmb{0}, \pmb{\varphi_{\gamma_t}} ]$ result in the equations
\begin{align*}
\partial_t \phi^j_t + \frac{1}{2}|\partial_x\phi^j_t|^2 = 0,\quad \text{and} \quad \partial_t\psi^i_t  + \frac{1}{2\kappa^2}\left|\psi^i_t - \frac{1}{|Z(V_i)|}\sum_{j \in Z(V_i)}\phi^j_t\right|^2 = 0.
\end{align*}
This corresponds exactly to \eqref{eq:Lagrangedual}.

\bibliographystyle{plain}
\bibliography{references}

\begin{thebibliography}{10}

\bibitem{ambrosio_2008}
Luigi Ambrosio, Nicola Gigli, and Giuseppe Savar\'{e}.
\newblock {\em Gradient flows in metric spaces and in the space of probability
  measures}.
\newblock Lectures in Mathematics ETH Z\"{u}rich. Birkh\"{a}user Verlag, Basel,
  second edition, 2008.

\bibitem{GasFlowNetwork2006}
Mapundi~K. Banda, Michael Herty, and Axel Klar.
\newblock Gas flow in pipeline networks.
\newblock {\em Networks \& Heterogeneous Media}, 1(1556-1801\_2006\_1\_41):41,
  2006.

\bibitem{benamou_computational_2000}
Jean-David Benamou and Yann Brenier.
\newblock A computational fluid mechanics solution to the {Monge}-{Kantorovich}
  mass transfer problem.
\newblock {\em Numerische Mathematik}, 84(3):375--393, January 2000.

\bibitem{bouchitte_new_1990}
Guy Bouchitté and Giuseppe Buttazzo.
\newblock New lower semicontinuity results for nonconvex functionals defined on
  measures.
\newblock {\em Nonlinear Analysis: Theory, Methods \& Applications},
  15(7):679--692, January 1990.

\bibitem{brasco_survey_2012}
Lorenzo Brasco.
\newblock A survey on dynamical transport distances.
\newblock {\em Journal of Mathematical Sciences}, 181(6):755--781, March 2012.

\bibitem{bressan2014flows}
Alberto Bressan, Sun{\v{c}}ica {\v{C}}ani{\'c}, Mauro Garavello, Michael Herty,
  and Benedetto Piccoli.
\newblock Flows on networks: recent results and perspectives.
\newblock {\em EMS Surveys in Mathematical Sciences}, 1(1):47--111, 2014.

\bibitem{Carrillo2010_nonlinear_mobility}
J.A. Carrillo, S.~Lisini, G.~Savaré, and D.~Slepčev.
\newblock Nonlinear mobility continuity equations and generalized displacement
  convexity.
\newblock {\em Journal of Functional Analysis}, 258(4):1273--1309, 2010.

\bibitem{chizat_interpolating_2010}
Lenaic Chizat, Gabriel Peyré, Bernhard Schmitzer, and François-Xavier
  Vialard.
\newblock An {Interpolating} {Distance} between {Optimal} {Transport} and
  {Fisher}-{Rao}.
\newblock {\em Foundations of Computational Mathematics}, September 2010.
\newblock Publisher: Springer Verlag.

\bibitem{chizat_unbalanced_2019}
Lénaïc Chizat, Gabriel Peyré, Bernhard Schmitzer, and François-Xavier
  Vialard.
\newblock Unbalanced optimal transport: Dynamic and kantorovich formulations.
\newblock {\em Journal of Functional Analysis}, 274(11):3090 -- 3123, 2018.

\bibitem{Erbar2021_network}
Matthias Erbar, Dominik Forkert, Jan Maas, and Delio Mugnolo.
\newblock Gradient flow formulation of diffusion equations in the wasserstein
  space over a metric graph, 2021.

\bibitem{hill2004fast}
David~B. Hill, Michael~J. Plaza, Keith Bonin, and George Holzwarth.
\newblock Fast vesicle transport in pc12 neurites: velocities and forces.
\newblock {\em European Biophysics Journal}, 33(7):623--632, 2004.

\bibitem{humpert_role_2019}
Ina Humpert, Danila {Di Meo}, Andreas~W. Püschel, and Jan-Frederik
  Pietschmann.
\newblock On the role of vesicle transport in neurite growth: Modeling and
  experiments.
\newblock {\em Mathematical Biosciences}, 338:108632, 2021.

\bibitem{kondratyev_new_2016}
Stanislav Kondratyev, Léonard Monsaingeon, and Dmitry Vorotnikov.
\newblock A new optimal transport distance on the space of finite {Radon}
  measures.
\newblock {\em Advances in Differential Equations}, 21(11/12):1117--1164,
  November 2016.

\bibitem{liero_optimal_2016}
Matthias Liero, Alexander Mielke, and Giuseppe Savaré.
\newblock Optimal transport in competition with reaction: the
  {Hellinger}-{Kantorovich} distance and geodesic curves.
\newblock {\em SIAM Journal on Mathematical Analysis}, 48(4):2869--2911,
  January 2016.
\newblock arXiv: 1509.00068.

\bibitem{liero_optimal_2018}
Matthias Liero, Alexander Mielke, and Giuseppe Savaré.
\newblock Optimal {Entropy}-{Transport} problems and a new
  {Hellinger}–{Kantorovich} distance between positive measures.
\newblock {\em Inventiones mathematicae}, 211(3):969--1117, March 2018.

\bibitem{Lisini2010}
Stefano Lisini and Antonio Marigonda.
\newblock On a class of modified wasserstein distances induced by concave
  mobility functions defined on bounded intervals.
\newblock {\em manuscripta mathematica}, 133(1-2):197--224, June 2010.

\bibitem{Mazon2015_optimal}
Jos{\'e}~M Maz\'on, Julio~D Rossi, and Julian Toledo.
\newblock Optimal mass transport on metric graphs.
\newblock {\em SIAM Journal on Optimization}, 25(3):1609--1632, 2015.

\bibitem{mindt2019entropy}
Pascal Mindt, Jens Lang, and Pia Domschke.
\newblock Entropy-preserving coupling of hierarchical gas models.
\newblock {\em SIAM Journal on Mathematical Analysis}, 51(6):4754--4775, 2019.

\bibitem{monsaingeon_new_2020}
Léonard Monsaingeon.
\newblock A new transportation distance with bulk/interface interactions and
  flux penalization.
\newblock {\em Calculus of Variations and Partial Differential Equations}, 60,
  06 2021.

\bibitem{rockafellar_integrals_1968}
R.~T. Rockafellar.
\newblock {Integrals which are convex functionals. II.}
\newblock {\em Pacific Journal of Mathematics}, 39(2):439 -- 469, 1971.

\bibitem{rockafellar_duality_1967}
R.~Tyrrell Rockafellar.
\newblock Duality and stability in extremum problems involving convex
  functions.
\newblock {\em Pacific Journal of Mathematics}, 21(1):167--187, 1967.
\newblock Publisher: Pacific Journal of Mathematics.

\bibitem{rockafellar_convex_1970}
R.~Tyrrell Rockafellar.
\newblock {\em Convex {Analysis}}.
\newblock Princeton University Press, 1970.

\bibitem{santambrogio_optimal_2015}
Filippo Santambrogio.
\newblock {\em Optimal {Transport} for {Applied} {Mathematicians}: {Calculus}
  of {Variations}, {PDEs}, and {Modeling}}.
\newblock Progress in {Nonlinear} {Differential} {Equations} and {Their}
  {Applications}. Birkhäuser Basel, 2015.

\bibitem{santambrogio2017euclidean}
Filippo Santambrogio.
\newblock $\{$Euclidean, metric, and Wasserstein$\}$ gradient flows: an
  overview.
\newblock {\em Bulletin of Mathematical Sciences}, 7(1):87--154, 2017.

\bibitem{tsaneva-atanasova_modeling_2007}
Krasimira Tsaneva-Atanasova, Nicolas Azzopardi, Thierry Galli, and David
  Holcman.
\newblock Modeling vesicle trafficking and neurite growth.
\newblock {\em Biophys J}, pages 115A--115A, January 2007.

\bibitem{villani2008optimal}
C.~Villani.
\newblock {\em Optimal Transport: Old and New}.
\newblock Grundlehren der mathematischen Wissenschaften. Springer Berlin
  Heidelberg, 2008.

\bibitem{villani_topics_2003}
C\'{e}dric Villani.
\newblock {\em Topics in optimal transportation}, volume~58 of {\em Graduate
  Studies in Mathematics}.
\newblock American Mathematical Society, Providence, RI, 2003.

\bibitem{yadaw_dynamic_2019}
Arjun Yadaw, Mustafa Siddiq, Vera Rabinovich, Rosa Tolentino, Jens Hansen, and
  Ravi Iyengar.
\newblock Dynamic balance between vesicle transport and microtubule growth
  enables neurite outgrowth.
\newblock {\em PLOS Computational Biology}, 15:e1006877, May 2019.

\end{thebibliography}
\end{document}